\def\XXint#1#2#3{{\setbox0=\hbox{$#1{#2#3}{\int}$ }
\vcenter{\hbox{$#2#3$ }}\kern-.6\wd0}}
\newcommand{\rmi}{\mathrm{i}}
\newcommand{\ord}{\mathrm{ord}}
\newcommand{\wt}{\widetilde}
\newcommand{\dd}{{\mathrm d}}
\newcommand{\Ss}{{\mathbb S}}
\newcommand{\bcs}{\begin{cases}}
\newcommand{\ecs}{\end{cases}}
\newtheorem{theorem}{Theorem}[section]
\newtheorem{proposition}[theorem]{Proposition}
\newtheorem{corollary}[theorem]{Corollary}
\newtheorem{lemma}[theorem]{Lemma}
\newtheorem{remark}[theorem]{Remark}
\newtheorem{definition}[theorem]{Definition}
\numberwithin{equation}{section}
\newcommand{\Q}{{\mathbb Q}}
\newcommand{\Z}{{\mathbb Z}}
\newcommand{\hcal}{{\mathcal H}}
\newcommand{\ical}{{\mathcal I}}
\newcommand\WF{\operatorname{WF}'}
\numberwithin{equation}{section}
\newtheorem{theo}{{\sc Theorem}}
\newtheorem{prob}[theo]{{\sc Problem}}
\newcommand{\R}{{\mathbb R}}
\newtheorem*{main-theorem}{Main Theorem}
\newtheorem*{old-thm}{Theorem}
\theoremstyle{definition}
\numberwithin{equation}{section}
\def\11{\mathds{1}}
\def\WF{\mathrm{WF}\,}
\def\supp{\mathrm{supp}\,}
\def\phi{\varphi}
\def\half{{\frac{1}{2}}}
\def\be{\begin{eqnarray*}}
\def\ee{\end{eqnarray*}}
\def\ben{\begin{eqnarray}}
\def\een{\end{eqnarray}}
\def\L2R{L_{\text{Rest}}^2}
\newcommand{\N}{{\mathbb N}}
\newcommand{\ccal}{\mathcal{C}}
\newcommand{\dcal}{\mathcal{D}}
\newcommand{\gcal}{\mathcal{G}}
\newcommand{\pcal}{\mathcal{P}}
\newcommand{\qcal}{\mathcal{Q}}
\newcommand{\scal}{\mathcal{S}}
\newcommand{\tcal}{\mathcal{T}}
\newcommand{\codim}{\operatorname{codim}}
\newcommand{\Char}{\operatorname{Char}}
\begin{document}
\title[]{Geodesic bi-angles and Fourier coefficients of  restrictions of eigenfunctions}
\author{Emmett L. Wyman}
\address{Department of Mathematics, Northwestern University, Chicago IL}
\email{emmett.wyman@northwestern.edu}

\author{Yakun Xi}
\address{School of Mathematical Sciences, Zhejiang University, Hangzhou 310027, PR China}
\email{yakunxi@zju.edu.cn}

\author{Steve Zelditch}
\address{Department of Mathematics, Northwestern University, Chicago IL}
\email{s-zelditch@northwestern.edu}

\maketitle
	\setcounter{tocdepth}{1}

\begin{abstract} This article concerns joint  asymptotics of Fourier coefficients of restrictions   of Laplace eigenfunctions $\phi_j$ of  a 
compact Riemannian manifold to a submanifold
$H \subset M$. We fix a number $c \in (0,1)$ and study the asymptotics of the thin sums, 
$$ N^{c} _{\epsilon, H  }(\lambda): = 
\sum_{j,  \lambda_j \leq \lambda}  \sum_{k: |\mu_k - c \lambda_j | < \epsilon} \left| \int_{H} \phi_j \overline{\psi_k}dV_H \right|^2 $$
where $\{\lambda_j\}$ are the eigenvalues of $\sqrt{-\Delta}_M,$ and $\{(\mu_k, \psi_k)\}$ are the eigenvalues, resp.  eigenfunctions, of $\sqrt{-\Delta}_H$. The inner sums
represent the `jumps' of $ N^{c} _{\epsilon, H  }(\lambda)$ and reflect the geometry of geodesic c-bi-angles with one leg on $H$ and a second leg on $M$ with
the same endpoints and compatible initial tangent vectors $\xi \in S^c_H M, \pi_H \xi \in B^* H$, where $\pi_H \xi$ is the orthogonal projection of $\xi$
to $H$. A c-bi-angle occurs when  $\frac{|\pi_H \xi|}{|\xi|}  = c$. Smoothed sums in $\mu_k$ are also studied, and give sharp estimates on the jumps. The  jumps themselves may jump as $\epsilon$ varies, at certain values of $\epsilon$  related to periodicities
in  the c-bi-angle geometry. Subspheres of spheres and certain subtori of tori illustrate these jumps. The results refine those of the previous article \cite{WXZ20} where the
inner sums run over   $k: | \frac{\mu_k}{\lambda_j} - c|  \leq \epsilon$ and where geodesic bi-angles do not play a role.

\end{abstract}

\tableofcontents

\section{Introduction}

Let  $(M, g)$ be  a compact, connected Riemannian manifold  of dimension $n$ without boundary, let $\Delta_M = \Delta_g$ denote its  Laplacian, and  let $\{\phi_j\}_{j=1}^{\infty} $ be an orthonormal basis of its eigenfunctions,
$$(\Delta_M + \lambda_j^2) \phi_j  = 0, \qquad \int_M \phi_j \overline{\phi_k} dV_M = \delta_{jk}, $$ 
where $dV_M$ is the volume form of $g$, and where the eigenvalues are enumerated in increasing order (with multiplicity), 
$\lambda_0 =0 < \lambda_1 \leq \lambda_2 \cdots \uparrow \infty.$
Let  $H \subset M$ be an embedded  submanifold  of dimension $d \leq n -1$  with induced metric $g |_{H}$, let $\Delta_H$ denote the Laplacian of $(H, g |_H)$, and let
 $\{\psi_k\}_{k=1}^{\infty} $ be an orthonormal basis of its eigenfunctions on $H$,
$$(\Delta_H + \mu_k^2) \psi_k = 0, \qquad \int_H \psi_j \overline{\psi_k} dV_H = \delta_{jk}, $$ 
where $dV_H$ is the volume form of $g|_H$.  We denote the restriction operator to $H$  by $$\gamma_H: C(M) \to C(H), \;\; \gamma_H f = f |_H. $$  This article is concerned
with the Fourier coefficients  \begin{equation} \label{FCDEF} a_{\mu_k} (\lambda_j) :=    \langle \gamma_H \phi_j, \psi_k \rangle_H =  \int_H (\gamma_H \phi_j) \psi_k dV_H \end{equation}  in the $L^2(H)$-orthonormal expansion,
\begin{equation} \label{ONR} \gamma_H \phi_j (y) = \sum_{k=1}^{\infty} \langle \gamma_H \phi_j, \psi_k \rangle_H\; \psi_k(y) \end{equation}
of the restriction of $\phi_j$ to $H$.
 Our goal is to understand the joint asymptotic behavior
of the Fourier coefficients \eqref{FCDEF} as   the eigenvalue pair $(\lambda_j, \mu_k)$ tends to infinity along a `ray' or `ladder' in the joint
spectrum of $(\sqrt{-\Delta_M} \otimes I, I \otimes \sqrt{- \Delta_H})$ on $M \times H$.
The motivating problem is to determine estimates or asymptotics for all of the Fourier coefficients \eqref{FCDEF} of an individual eigenfunction
$\phi_j$ as $\lambda_j \to \infty$.

This problem originates  in the spectral theory of automorphic forms, where $(M,g)$ is a
 compact (or finite area, cusped) hyperbolic surface, and where $H$ is a closed geodesic, or a distance circle, or a closed horocycle (in the cusped case). 
In this case,  \eqref{FCDEF} takes the form,
\begin{equation} \label{FOURIER} \gamma_H\phi_j (s)= \sum_{n \in \Z} a_n(\lambda_j) e^{\frac{2 \pi i n s}{L}}, \end{equation} 
where $L $ is the length of $H$ and $s$ is the arc-length parameter.  See Section \ref{HYPERBOLIC} for a
 more precise statement when $H$ is a closed horocycle. Classical estimates and asymptotics of Fourier coefficients of modular forms are studied by Rankin \cite{Ra77}, Selberg \cite{Sel65}, Bruggeman \cite{Br81}, Kuznecov \cite{K80}, and many others. Systematic expositions in this context may be found in \cite{Br81, G83,  I02}. It  is `folklore',
and proved also in this article,  that $ |a_n(\lambda_k)| = O_{k, \epsilon} (n^{-N})$ for all $N \geq 0$ for $n \geq \lambda_k + \epsilon$, and for any $\epsilon > 0$;  see also \cite{Wo04,  Xi}  for some estimates of this type.   If one thinks of $\lambda_k$ as the `energy' and $n$ as the
angular momentum, then rapid decay  occurs in the ``forbidden region''  where the angular momentum exceeds the energy see Section \ref{AFSECT}). Hence, we restrict to the
case where $|n| \leq \lambda_j$.  The motivating
question is, how does the dynamics of the geodesic flow of $(M,g)$ and of $(H, g |_H)$ determine the equidistribution properties
of the restricted Fourier coefficients?

Studying the joint asymptotics of the Fourier coefficients \eqref{FCDEF}  for individual eigenfunctions of  general compact Riemannian manifolds $(M,g)$ and submanifolds
$H \subset M$ is a very difficult problem and all but intractable except in special cases such as subspheres of  standard spheres. Asymptotic knowledge of the coefficients \eqref{FCDEF} would afortiori  imply  asymptotic knowledge of the $L^2$ norms of restrictions of eigenfunctions, 
\begin{equation} \label{PLANCHEREL} \int_H |\gamma_H \phi_j|^2 d S_H = \sum_{k} |a_{\mu_k}(\lambda_j)|^2, \end{equation}
which are themselves only known in special cases. General estimates of \eqref{PLANCHEREL} 
are proved in \cite{BGT}, but are only sharp in special cases.  Knowledge of the distribution of Fourier coefficients and their relations to the
geodesic flow are yet (much)  more complicated. In the case where $(M,g)$ has ergodic geodesic flow and $H \subset M$ is a hypersurface, the $L^2$ norms of restrictions is known as the QER (quantum ergodic restriction)
problem, i.e. to determine when $\{\gamma_H \phi_j\}_{j=1}^{\infty}$ (or at least a subsequence of density one) is quantum ergodic along $H$. For instance, if $H$ is a hypersurface of a compact negatively curved manifold, it is shown in  \cite{TZ13} that  there exists  a full density subsequence of the restrictions $\{\gamma_H \phi_{j_k} \}$ such that \begin{equation} \label{QER} \int_H |\gamma_H \phi_{j_k}|^2 dS_H = \sum_{m: \mu_m \leq \lambda_{j_k} }  |a_{\mu_{m} } (\lambda_{j_k})|^2 \simeq 1, \;\; (k \to \infty). \end{equation}
The Fourier coefficient distribution problem is to determine how the `mass' of the Fourier coefficients are distributed. 
 It has been conjectured  that when the geodesic flow is `chaotic' (e.g. for a negatively curved surface), the Fourier coefficients should 
 exhibit  `equipartition of energy', i.e. all be roughly of the same size.  By comparison, for standard spherical harmonics $Y_N^m $ on $\Ss^2$, the restrictions have only one non-zero Fourier coefficient. The general Fourier coefficient problem is to find conditions on   $(M, g, H)$ 
under which the Fourier coefficients $a_{\mu}(\lambda_j)$
concentrate at  one particular frequency (tangential eigenvalue), or under which they exhibit equipartition of energy. 

In this article,  we take  the first step in this program by studying the averages in $\lambda_j$ of sums of squares of a localized set of Fourier coefficients.  Equivalently, we study the  joint asymptotics of \eqref{FCDEF} of the thinnest possible sums over the joint spectrum $\{(\lambda_j, \mu_k)\}_{j, k =1}^{\infty}$.  Here, `thin' refers to both the width of the window in the  spectral averages in $\lambda_j$, and also the width of the 
window of Fourier modes $\mu_k$.  In  comparison, conic or wedge windows are studied in \cite{WXZ20}.



\subsection*{Kuznecov-Weyl sums}

The Kuznecov sum formula 
 in the sense of \cite{Zel92}  refers to the asymptotics
of the Weyl-type sums,
\begin{equation} \label{f} N_H(\lambda; f): = \sum_{j: \lambda_j \leq \lambda} \left| \int_H f \phi_j dV_H \right|^2,  \end{equation}
where $f \in C^{\infty}(H)$.  Here, and henceforth, we drop the restriction operator $\gamma_H$ from $\gamma_H \phi_j$ if it is 
obvious from the context that $\phi_j$ is being restricted. 
The asymptotics are controlled by the structure of the `common orthogonals', i.e.  the set of geodesic arcs which hit $H$ orthogonally at both endpoints.
The original Kuznecov formulae pertained to special curves on arithmetic hyperbolic surfaces \cite{K80}, but the Weyl asymptotics  have been generalized to 
submanifolds of general Riemannian manifolds in \cite{Zel92}. Recent improvements of spectral projection estimates under various conditions  appear in   \cite{ ChS15, CGT17, SXZh17, Wy17a, CG19,  WX}. Fine asymptotics in the arithmetic setting are given in \cite{M16}. We refer to these articles for many further
references.


In this article, we are interested in the joint asymptotics of $ |\langle \gamma_H \phi_j, \psi_k \rangle_{L^2(H)}|^2$ as 
 the pair $(\lambda_k, \mu_j)$  tends to infinity along a ray in $\R^2$. Except for exceptional $(M, g, H)$ it is not possible to obtain asymptotics along a single ray. Instead, we study the joint asymptotics in a thin strip around the ray. Our
 main result deploys a smooth version of such a strip, sometimes called
`fuzzy ladder asymptotics' in the sense of \cite{GU89}.  Let $\psi \in \scal(\R)$ (Schwartz space)
 with $\hat{\psi} \in C_0^{\infty}(\R)$ a positive test function. We then define the fuzzy-ladder sums,
  \begin{equation} \label{cpsi}
 N^{c} _{\psi, H  }(\lambda): = 
\sum_{j:  \lambda_j \leq \lambda}  \sum_{k=0}^{\infty}  \psi( \mu_k - c \lambda_j)  \left| \int_{H} \phi_j \overline{\psi_k}dV_H \right|^2.
 \end{equation}
 The asymptotics of \eqref{cpsi} depend on the geometry of what we term $(c, s, t)$ bi-angles in Section \ref{GEOMSECT}. Roughly speaking, such
 a bi-angle consists of two geodesic arcs, one a geodesic of  $H$ of length $s$, the other a geodesic of $M$ of length $c s + t$, with common endpoints
 and making a common angle ${\rm arccos} \; c$ with $H$. When $t= 0$,  the set of such bi-angles is defined by,
 \begin{equation}  \label{EQs=0} \gcal^0_c = \{(q, \xi) \in S^*_HM: \; |\pi_H \xi| \; = c |\xi|, \;G_H^{ -  s} \circ \pi_H \circ G_M^{c s} (q, \xi)  = (q, \pi_H (\xi))\}, 
  \end{equation} where $\pi_H: T^*_q M \to T_q^*H$ is the orthogonal projection at $q \in H$.

 It turns out that the ``spectral edge" case $c=1$ requires different techniques from the case $c < 1$.  It is the 	edge or  `interface' between the allowed
interval $[0, 1] $ and the complementary  `forbidden' intervals of  $c$-values (see Section \ref{AFSECT} for discussion).  
As often happens at interfaces, there is a variety
of possible behaviors when $c=1$.  As discussed below, the case $c=1$ corresponds
 to tangential intersections of geodesics with $H$, which depend on whether 
  $H$ is totally geodesic or whether it has non-degenerate second fundamental form. For this reason, we separate out the cases $0< c <1 $ and $c=1$ and only study $0< c < 1$ in this article. The case of $c=1$ and $H$
 totally geodesic is studied in  \cite{WXZ+}. The case of $c=1$ and $H$ with a non-degenerate second fundamental form requires quite different
 techniques from the totally geodesic case, and  is currently under investigation. 

 The first result is a ladder refinement of the main result of \cite{WXZ20}.  For technical convenience, we assume the test function $\psi$ is even and  non-negative.
\begin{theorem} \label{main 2} Let $\dim M = n$ and let $\dim H =d$. Let  $0 < c <1$  and assume that $\gcal_c^0$  \eqref{EQs=0} is clean
in the sense of Definition 
\ref{CLEAN}.  Then, if $\psi\ge0$ is even, $\hat{\psi} \in C^{\infty}_0(\R)$,  and $\supp \hat{\psi}$ is contained in a sufficiently small interval around $s = 0$,
there exist universal constants $C_{n,d}$ such that 
$$N_{\psi, H}^{c} (\lambda) =
 C_{n,d} \;\; a_c^0(H, \psi) \lambda^{n-1 } +   R_{\psi, H}^{c} (\lambda), \;\; \text{with}\;\; R_{\psi, H}^{c} (\lambda) =  O(\lambda^{n-2}), 
 $$
where the leading coefficient is given by,
\begin{equation} \label{acHpsi0} a_c^0(H, \psi): = 
 \hat{\psi}(0) \; c^{d-1} (1 - c^2)^{\frac{n-d-2}{2}}  \hcal^{d}(H), 
\end{equation}
\end{theorem}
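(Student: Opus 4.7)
The plan is to establish Theorem \ref{main 2} via a Fourier Tauberian reduction combined with a wave-trace representation and multi-parameter stationary phase, in the spirit of \cite{WXZ20}. Fix an even, non-negative $\rho\in\scal(\R)$ with $\hat\rho\in C_0^\infty(\R)$, $\hat\rho(0)=1$, and $\supp\hat\rho$ contained in a small neighbourhood of the origin. By a standard cosine Tauberian theorem of H\"ormander type, the claimed asymptotic for $N^c_{\psi,H}(\lambda)$ with remainder $O(\lambda^{n-2})$ reduces to the smoothed asymptotic
\[
\sum_j \rho(\lambda-\lambda_j)\, B_j(\psi) \;=\; (n-1)\, C_{n,d}\, a_c^0(H,\psi)\, \lambda^{n-2} + O(\lambda^{n-3}),
\]
where $B_j(\psi):=\sum_k \psi(\mu_k - c\lambda_j)\,|a_{\mu_k}(\lambda_j)|^2$.

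To compute the left-hand side I would rewrite $B_j(\psi) = \langle \psi(\sqrt{-\Delta_H}-c\lambda_j)\,\gamma_H\phi_j,\gamma_H\phi_j\rangle_H$ and Fourier invert both $\rho$ and $\psi$. This recasts the sum as
\[
\frac{1}{(2\pi)^2}\iint \hat\rho(s)\,\hat\psi(t)\, e^{i\lambda s}\,\Tr_{L^2(M)}\!\bigl[\gamma_H^{*}\, U_H(t)\, \gamma_H\, U_M(-s-ct)\bigr]\, ds\, dt,
\]
where $U_M(\tau) = e^{i\tau\sqrt{-\Delta_M}}$ and similarly for $U_H(\tau)$. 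The composite $\gamma_H^{*}U_H(t)\gamma_H U_M(-s-ct)$ is an FIO whose canonical relation on $T^*M\setminus 0$ is precisely the $(c,s,t)$ bi-angle correspondence described in Section \ref{GEOMSECT}; when both $\supp\hat\rho$ and $\supp\hat\psi$ are concentrated near $0$, the only fixed points of this relation lie in the trivial $(s,t)=(0,0)$ slice $\gcal_c^0$ defined in \eqref{EQs=0}, so closed non-trivial $c$-bi-angles make no contribution.

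Inserting the H\"ormander parametrices for the two wave kernels produces a scalar oscillatory integral whose phase, after rescaling the fiber variables by $\lambda$, is linear in $\lambda$. The clean intersection hypothesis of Definition \ref{CLEAN} guarantees that the critical set
\[
\{(y,\xi,\eta,s,t)\,:\, y\in H,\ s=t=0,\ |\xi|=1,\ |\eta|=c,\ \eta=\pi_H\xi\}
\]
is a smooth submanifold on which the phase vanishes and the transverse Hessian is nondegenerate. A Morse--Bott stationary phase expansion then extracts the leading $\lambda^{n-2}$ contribution with an explicit coefficient, plus a sub-leading $O(\lambda^{n-3})$ error. To identify the coefficient I would decompose the density on the critical manifold as $dV_H(y)$ times the induced surface measure on the cone $\{\xi\in S^{n-1}\subset T^*_yM: |\pi_H\xi|=c\}$. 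Writing $\xi = c\,\omega + \sqrt{1-c^2}\,\nu$ with $\omega\in S^{d-1}\subset T^*_yH$ and $\nu\in S^{n-d-1}\subset N^*_yH$, an elementary spherical coordinate calculation shows that the standard surface measure on $S^{n-1}\subset T^*_yM$ pulls back to $c^{d-1}(1-c^2)^{(n-d-2)/2}\, d\omega\, d\nu$; this is precisely the origin of the geometric factor in $a_c^0(H,\psi)$. The $s,t$ integrations yield $\hat\rho(0)\hat\psi(0)=\hat\psi(0)$, and the remaining sphere-volumes, $(2\pi)$-powers, and Maslov phases (trivial on the diagonal bi-angle) are absorbed into the universal dimensional constant $C_{n,d}$.

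The main technical obstacle is the stationary phase step: carefully tracking the principal symbols of $\gamma_H$, $\gamma_H^{*}$, $U_M$, and $U_H$ through the composition so that the transverse Hessian on the clean critical manifold produces exactly the cone factor $c^{d-1}(1-c^2)^{(n-d-2)/2}$, and confirming that the codimension prescribed by the clean intersection hypothesis yields the power $\lambda^{n-2}$ predicted above. Controlling the full remainder $R^c_{\psi,H}(\lambda)=O(\lambda^{n-2})$ after Tauberian inversion requires one sub-principal term in the stationary phase expansion together with standard Tauberian error bounds.
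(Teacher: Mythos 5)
Your proposal follows the paper's architecture at the top level (smoothed sum $\to$ Tauberian inversion, via Proposition \ref{tauberian 1}) but takes a genuinely different route to the key asymptotic for the doubly smoothed sum. The paper sets up the problem on the product $M\times H$, forms the fuzzy ladder projector $\psi(Q_c)$ with $Q_c = cP_M - P_H$, and then tracks the canonical relation through the chain $e^{itP}\psi(Q_c)$, restriction to $H\times H$ (Sipailo's theorem, Lemmas \ref{REDUCTIONLEM}--\ref{REDUCTIONLEMEXT}), diagonal pullback, and pushforward to $\R$ (Proposition \ref{MAINFIOPROP}); the order and symbol emerge from the abstract FIO composition/excess calculus. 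You instead Fourier-invert $\rho$ and $\psi$ from the start, rewrite the doubly smoothed sum directly as an $(s,t)$-integral of the trace of $\gamma_H^*U_H(t)\gamma_H U_M(-s-ct)$, insert explicit wave parametrices, and perform Morse--Bott stationary phase around the bi-angle critical manifold. These are two implementations of the same singularity analysis, and your identification of the critical set with $\gcal_c^{0,0}\simeq S_H^cM$ and of the cone measure $c^{d-1}(1-c^2)^{(n-d-2)/2}\,d\omega\,d\nu$ matches the paper's Leray-measure computation in Section \ref{SHARPWKINTRO}. What the paper's FIO route buys is a systematic way to verify cleanliness and track orders at every intermediate step; what your route buys is directness. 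Two things you should make explicit: (a) the paper must introduce microlocal cutoffs $B_\epsilon$ (Section on microlocal cutoffs, Lemma \ref{LRLEMLEM}) because the intermediate restriction compositions are \emph{not} clean near glancing ($c=1$) and conormal ($c=0$) directions; in your parametrix computation this same issue arises in the restricted wave kernels, and you should either microlocalize to a conic neighborhood of $S^c_HM$ (legitimate for fixed $c\in(0,1)$, since the critical set is uniformly separated from both glancing and conormal) or argue the contributions from those directions are negligible; and (b) your coefficient $(n-1)C_{n,d}a_c^0(H,\psi)$ for the smoothed sum differs from the paper's Theorem \ref{main 4} by the factor $n-1$, which the paper absorbs into the unspecified constant $C_{n,d}$ — internally consistent, but you should be aware that the paper uses the \emph{same} symbol $C_{n,d}$ in Theorems \ref{main 2} and \ref{main 4}, while your convention shifts the factor of $n-1$ between them.
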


The definition of ``$\supp\; \hat{\psi}$ is sufficiently small" is given in Definition \ref{SUFFSMALLDEF}  below. For the moment, we say that 
when $0 < c < 1$,  it means that the only    component  of \eqref{EQs=0} with    $s \in \supp\hat{\psi}$ is the component  with $s=0$.
  It is straightforward to remove the condition that $\rm{supp} \;\hat{\psi}$ is small, but then there are further contributions  from other components of
$\gcal_c^0$, which require further definitions and notations.  We state the generalization in Theorem \ref{main 5} below. 
 The coefficient $a_c^0(H, \psi)$  is discussed in Section \ref{SHARPWKINTRO}.  

Under additional dynamical assumptions on the geodesic flow $G^t_M$ of  $(M,g)$ and the geodesic flow $G_H^t$ of $(H, g|_{H})$, one can improve the remainder estimate of Theorem \ref{main 2} (see Theorem \ref{main 2b} and Theorem
\ref{main 3}.) The  improvement requires the inclusion of all components of $\gcal_c^0$ and therefore requires the generalization
Theorem \ref{main 5} of Theorem \ref{main 2}. Since it requires further notation and definitions, we postpone it until later in the introduction.

\subsection{Jumps in the Kuznecov-Weyl sums} To obtain results for individual eigenfunctions (more precisely, for individual eigenvalues)
by the techniques of this article, we study the jumps,
\begin{equation} \label{JDEFpsi}  J^{c} _{ \psi, H  }(\lambda_j): =  \sum_{\ell: \lambda_{\ell} = \lambda_j} 
\sum_{ k =0}^{\infty}  \psi( \mu_k - c \lambda_j)  \left| \int_{H} \phi_{\ell} \overline{\psi_k}dV_H \right|^2,\end{equation} 
in the Kuznecov-Weyl sums \eqref{cpsi}  at the eigenvalues $\lambda_j$. The sum over $\ell$ is a sum over an orthonormal basis for the eigenspace $\hcal(\lambda_j)$ of $-\Delta_M$ of eigenvalue
$\lambda_j^2$. Since the leading term is continuous, the jumps are jumps of the remainder, 
\begin{equation} \label{JUMPREM} J^{c}_{\psi, H}(\lambda_j) =  R_{\psi, H}^{c} (\lambda_j + 0 ) - R_{\psi, H}^{c} (\lambda_j - 0). \end{equation} 
By \eqref{JUMPREM} and Theorem \ref{main 2}, we get  
\begin{corollary} \label{main 2cor}  With the same assumptions and notations as in Theorem \ref{main 2},  for
any positive even test function $\psi$ with $\hat{\psi} \in C_0^{\infty}(\R)$,  there exists a constant $C_{c, \psi}  > 0$ such that
$$
	J_{\psi, H}^{c} (\lambda) \leq  \begin{array}{ll}   C_{c, \psi}\; \lambda^{n-2}, & 0 < c < 1. \end{array}
$$
\end{corollary}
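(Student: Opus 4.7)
The plan is to deduce this corollary directly from Theorem \ref{main 2} together with the observation identified in \eqref{JUMPREM}. First I would record that, because $\psi \geq 0$, every term in the definition \eqref{JDEFpsi} of $J^{c}_{\psi,H}(\lambda_j)$ is non-negative, so the jump itself is non-negative. Next, since the leading term $C_{n,d}\,a_c^0(H,\psi)\,\lambda^{n-1}$ in Theorem \ref{main 2} is a continuous function of $\lambda$, it contributes nothing to the jump of $N^{c}_{\psi,H}$ at any eigenvalue; hence $J^{c}_{\psi,H}(\lambda_j)$ coincides with the jump of the remainder, as asserted in \eqref{JUMPREM}.

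Combining these two points, I would bound
\[
0 \;\leq\; J^{c}_{\psi,H}(\lambda_j) \;=\; R^{c}_{\psi,H}(\lambda_j + 0) - R^{c}_{\psi,H}(\lambda_j - 0)
\;\leq\; \bigl|R^{c}_{\psi,H}(\lambda_j + 0)\bigr| + \bigl|R^{c}_{\psi,H}(\lambda_j - 0)\bigr|.
\]
Applying the remainder estimate $R^{c}_{\psi,H}(\lambda) = O(\lambda^{n-2})$ from Theorem \ref{main 2} to each one-sided limit then gives $J^{c}_{\psi,H}(\lambda_j) \leq 2C\lambda_j^{n-2}$, with a constant $C$ that depends only on $c$ and $\psi$ (through the constants implicit in Theorem \ref{main 2} and the cleanness hypothesis on $\gcal^0_c$). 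Setting $C_{c,\psi} := 2C$ yields the stated bound.

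The only subtlety worth commenting on is that the remainder estimate of Theorem \ref{main 2} must be valid on both sides of each eigenvalue, i.e.\ for both $R^{c}_{\psi,H}(\lambda_j \pm 0)$. This is automatic, since $N^{c}_{\psi,H}(\lambda)$ is a locally finite non-decreasing sum of non-negative quantities, so its one-sided limits exist at every $\lambda$, and the $O(\lambda^{n-2})$ bound of Theorem \ref{main 2} applies uniformly to the corresponding one-sided limits of the remainder. Consequently there is no genuine obstacle in the argument; the work is entirely absorbed into Theorem \ref{main 2}, and the corollary is essentially an observation about the non-negativity of the jumps combined with continuity of the leading term.
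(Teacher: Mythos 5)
Your proof is correct and follows exactly the route the paper intends: the paper simply asserts that the corollary follows from \eqref{JUMPREM} together with the remainder bound of Theorem \ref{main 2}, and you have filled in the (short) bookkeeping — non-negativity of the jump from $\psi \geq 0$, continuity of the leading term, and the triangle inequality applied to the two one-sided limits of $R^c_{\psi,H}$. Nothing further is needed.
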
 
Corollary \ref{main 2cor} has a further implication on the `sharp jumps' where we replace $\psi$ by an indicator function, ${\bf 1}_{[-\epsilon, \epsilon]}$. 
\begin{equation} \label{JDEF}  J^{c} _{ \epsilon, H  }(\lambda_j): = \sum_{\ell: \lambda_{\ell} = \lambda_j} 
\sum_{  \substack{k: | \mu_k - c \lambda_j|  \leq \epsilon}}   \left| \int_{H} \phi_{\ell} \overline{\psi_k}dV_H \right|^2. \end{equation}
 By choosing $\psi$ carefully in Corollary \ref{main 2cor} we prove,

\begin{corollary} \label{JUMPCOR} With the above notation and assumptions, 
$$
	J^{c}_{\epsilon, H}(\lambda_j) =  O_{c,\epsilon} (\lambda^{n-2}), \qquad 0 < c < 1.
$$
\end{corollary}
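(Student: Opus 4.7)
The plan is to deduce Corollary \ref{JUMPCOR} from Corollary \ref{main 2cor} by constructing a non-negative even Schwartz function $\psi = \psi_\epsilon$, with $\hat{\psi}$ compactly supported in a sufficiently small neighborhood of $0$, that pointwise dominates the indicator function $\mathbf{1}_{[-\epsilon, \epsilon]}$. Since every summand in \eqref{JDEF} and \eqref{JDEFpsi} is non-negative, such a domination gives the term-by-term inequality
\begin{equation*}
J^c_{\epsilon, H}(\lambda_j) \;\leq\; J^c_{\psi_\epsilon, H}(\lambda_j) \;\leq\; C_{c, \psi_\epsilon}\, \lambda_j^{n-2},
\end{equation*}
and since $\psi_\epsilon$ depends only on $c$ and $\epsilon$ (through the support constraint and the level $\epsilon$), the final bound is $O_{c,\epsilon}(\lambda_j^{n-2})$.

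The construction of the dominating $\psi_\epsilon$ is standard Paley--Wiener. First I would take a real, even, non-negative $\eta \in C_0^\infty(\R)$ with $\eta(0)>0$, set $g = \hat{\eta}$, and form $\psi_0 = |g|^2$. Then $\psi_0 \geq 0$ is Schwartz and even, $\psi_0(0) = |\int \eta|^2 > 0$, and by the convolution theorem $\hat{\psi}_0 = \eta * \tilde{\eta}$ has compact support (of size at most twice that of $\eta$). To meet both required constraints simultaneously, I would scale: set $\psi_\epsilon(t) = A\, \psi_0(t/B)$, so that $\hat{\psi}_\epsilon(s) = AB\, \hat{\psi}_0(Bs)$. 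Choosing $B$ sufficiently large makes $\mathrm{supp}\,\hat{\psi}_\epsilon$ fit inside the small window from Definition \ref{SUFFSMALLDEF}, while simultaneously ensuring by continuity that $\psi_0(t/B) \geq \psi_0(0)/2$ for all $|t|\leq \epsilon$; then taking $A = 2/\psi_0(0)$ yields $\psi_\epsilon \geq 1$ on $[-\epsilon, \epsilon]$. These two conditions on $B$ are compatible since both are lower bounds.

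With $\psi_\epsilon$ in hand, I would apply Corollary \ref{main 2cor} directly. Concretely, for each $\ell$ with $\lambda_\ell = \lambda_j$ and each $k$ with $|\mu_k - c\lambda_j| \leq \epsilon$, one has $\psi_\epsilon(\mu_k - c\lambda_j) \geq 1$, while for all other $k$ the corresponding summand in $J^c_{\psi_\epsilon, H}(\lambda_j)$ is still non-negative. Summing over $\ell$ and $k$ therefore yields $J^c_{\epsilon, H}(\lambda_j) \leq J^c_{\psi_\epsilon, H}(\lambda_j)$, and Corollary \ref{main 2cor} gives the desired $O_{c,\epsilon}(\lambda_j^{n-2})$ bound.

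I do not expect a genuine obstacle here; the main point to verify carefully is the joint compatibility in Step 2, namely that the smallness of $\mathrm{supp}\, \hat{\psi}_\epsilon$ required by Theorem \ref{main 2} and the domination requirement $\psi_\epsilon \geq \mathbf{1}_{[-\epsilon,\epsilon]}$ can be arranged simultaneously for arbitrary $\epsilon > 0$. The non-negativity of every term in the jump sums is essential, and it is this positivity that allows the passage from the smoothed window $\psi$ to the sharp window $\mathbf{1}_{[-\epsilon, \epsilon]}$ without any loss beyond the $\epsilon$-dependence absorbed into the implicit constant.
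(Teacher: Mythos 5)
Your proposal is correct and takes essentially the same route as the paper: dominate $\mathbf{1}_{[-\epsilon,\epsilon]}$ pointwise by a non-negative Schwartz function $\psi_\epsilon$ whose Fourier transform is compactly supported in a sufficiently small interval (obtained, as you note, by dilating widely enough in $t$ so that $\hat\psi_\epsilon$ simultaneously narrows), then invoke Corollary \ref{main 2cor} and positivity of the summands. The only difference is that you make explicit the Paley--Wiener construction $\psi_0=|\hat\eta|^2$ and the scaling argument, which the paper simply asserts without detail.
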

This estimate is shown to be sharp for closed geodesics in general spheres $\Ss^n$ in Section \ref{SnSHARPSECT}. The sharpness
of the jump estimates is intimately related to the periodicity properties of both $G^t_M$ and $G^t_H$, and is discussed in detail in 
Section \ref{CLUSTERSECT}.
The dependence on $\epsilon$ in Corollary \ref{JUMPCOR} is complicated in general, as illustrated in 
Section \ref{S2SPARSE} and Section \ref{SnSHARPSECT}.  This is because of potential concentration of the Fourier coefficients
at joint eigenvalues $(\lambda_j, \mu_k)$ at  the `edges' or endpoints of the strip of width  $\epsilon$ around a ray. See Lemma \ref{JUMPSPHERE} for the explicit formula in the case of subspheres
of spheres. 
When $\dim M =2,\, \dim H=1$, the eigenvalues of  $|\frac{d }{ds}|$  of $H $ are of course multiples 
of $n \in \N$ involving the length of $H$ and are therefore separated by gaps of size depending on the length.  The  jumps \eqref{JDEF} can themselves jump as $\epsilon $ varies, as
illustrated by restrictions from spheres to subspheres in the same section. See also Section \ref{REMAINDERSECT}
for further discussion. These edge effects arise again in Theorem \ref{main 3}.
For $\epsilon$ sufficiently small, only one eigenvalue will occur in the sum over $k: |\mu_k - c \lambda_j| \leq \epsilon$ and therefore Corollary \ref{GENCOR} gives a bound on the size of an individual Fourier coefficient
of an individual eigenfunction under the constraints on $(c, H)$ in the corollary.  We refer to Section \ref{2Sphere} for examples of curves on $\Ss^2$. We also refer to the second author's work \cite{Xi} and the first two authors' work \cite{WX} for prior results on  bounds on Fourier coefficients.

\begin{remark} \label{GENCOR} 

For a generic metric $g$ on any manifold $M$, the spectrum of the Laplacian is simple,
i.e. all eigenvalues have multiplicity one \cite{U}.  In this case,  the $\lambda_j$ sum of \eqref{JDEF} reduces to a single term and one gets, 
 hence,
\begin{equation} \label{INDIVIDEST}  \left| \int_{H} \phi_j \overline{\psi_k}dV_H \right|^2 \leq J^{c}_{\epsilon, H}(\lambda_j):= 
\sum_{  \substack{k: | \mu_k - c \lambda_j|  \leq \epsilon}}   \left| \int_{H} \phi_{j} \overline{\psi_k}dV_H \right|^2, \;\; \forall k: |\mu_k - c \lambda_j| \leq \epsilon. \end{equation}

 \end{remark}

\subsection{Allowed and forbidden joint eigenvalue regions}\label{AFSECT} 
We briefly indicate why we restrict to $c \in [0,1]$. The reason is that the asymptotics are trivial outside
this range.

	 \begin{lemma}\label{FORBIDDEN} If $\mu_k/\lambda_j \geq 1+ \epsilon$, then for any $N \geq 1$,
	 $  \left| \int_{H} \phi_j \overline{\psi_k}dV_H \right|^2
\leq C_N(\epsilon) \;\lambda_j^{- N}$. Hence,  \eqref{JDEF} for  $c > 1$  is rapidly decaying in $\lambda$.
	 \end{lemma}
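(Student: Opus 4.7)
\medskip

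The plan is to exploit the microlocal mismatch between the semiclassical scale $h=1/\lambda_j$ set by $\phi_j$ and the tangential frequency $\mu_k \geq (1+\epsilon)/h$ of $\psi_k$, which places $\psi_k$ outside the ``cosphere shadow'' of $\phi_j$ on $H$. Concretely, I would set $h = 1/\lambda_j$ and pick an even cutoff $\chi \in C^\infty(\R)$ with $\chi(s)=1$ for $|s|\geq 1+\epsilon/2$ and $\chi(s)=0$ for $|s|\leq 1+\epsilon/4$. By the spectral theorem on $H$, $\chi(h\sqrt{-\Delta_H})\psi_k = \psi_k$ whenever $h\mu_k \geq 1+\epsilon$. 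Moving this self-adjoint cutoff across the pairing gives
$$
 a_{\mu_k}(\lambda_j) = \langle \gamma_H\phi_j, \psi_k\rangle_H = \langle \chi(h\sqrt{-\Delta_H})\gamma_H \phi_j, \psi_k\rangle_H ,
$$
and Cauchy--Schwarz, together with $\|\psi_k\|_{L^2(H)}=1$, reduces the problem to establishing
$$
 \|\chi(h\sqrt{-\Delta_H})\gamma_H \phi_j\|_{L^2(H)} = O(h^N), \qquad \forall\, N\geq 0. \qquad (\ast)
$$

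The second step is to establish the semiclassical wavefront containment $WF_h(\gamma_H \phi_j) \subset B^*H$, where $B^*H$ denotes the closed unit coball bundle. Because $\phi_j$ is a semiclassical eigenfunction of $-h^2\Delta_M$ at eigenvalue $1$, one has $WF_h(\phi_j)\subset S^*M$; the restriction map $\gamma_H$ is a semiclassical FIO associated to the projection $\pi_H: T^*M|_H \to T^*H$, which sends $S^*M|_H$ into $B^*H$. A more elementary route, which I would prefer to write out, uses Fermi coordinates $(y,t)$ near $H$ and a partial semiclassical tangential quantization in $y$: the equation $(h^2\Delta_M+1)\phi_j=0$ becomes an elliptic ODE in $t$ on the tangential frequency region $\{|\eta|_g > 1+\epsilon/4\}$, so iterating this elliptic regularity and a normal parametrix produces, for every $N$, the bound $\|\Op_h(a)\gamma_H\phi_j\|_{L^2(H)} = O(h^N)$ for any tangential symbol $a$ supported in $\{|\eta|_g \geq 1+\epsilon/4\}$. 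Since $\chi(h\sqrt{-\Delta_H})$ is exactly such a semiclassical pseudodifferential operator by the functional calculus of Dimassi--Sj\"ostrand, the semiclassical elliptic estimate yields $(\ast)$.

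Combining the two steps gives $|a_{\mu_k}(\lambda_j)|^2 \leq C_N(\epsilon)\lambda_j^{-2N}$, which is the first assertion. For the second assertion about \eqref{JDEF} with $c>1$, one only needs to count terms: Weyl's law on $M$ gives $O(\lambda_j^{n-1})$ indices $\ell$ with $\lambda_\ell = \lambda_j$, and Weyl's law on $H$ gives $O(\lambda_j^{d-1})$ indices $k$ with $|\mu_k - c\lambda_j|\leq \epsilon$. Each individual term is $O(\lambda_j^{-2N})$ by the previous paragraph, so choosing $N$ large absorbs the polynomial count and yields rapid decay of $J^c_{\epsilon,H}(\lambda_j)$.

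The main obstacle is making the wavefront containment $WF_h(\gamma_H\phi_j)\subset B^*H$ quantitative without invoking heavier FIO machinery. The cleanest path is to write $\phi_j$ in Fermi coordinates, perform a tangential dyadic decomposition with semiclassical symbols supported in $\{|\eta|_g \in [1+\epsilon/4, \infty)\}$, and use the eigenfunction equation to trade each tangential derivative for a factor of $h$ via an elliptic inversion in the normal variable; a bootstrap in the order of regularity then delivers $(\ast)$ directly.
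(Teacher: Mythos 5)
Your proof is correct and takes a genuinely different, more elementary route than the paper's. The paper obtains the lemma as a byproduct of its Fourier-integral-operator machinery: for $c>1$ the cone $S^c_H M$ of \eqref{SHc} (covectors $\xi$ with $|\pi_H\xi|=c>1$) is empty, so the canonical relation $\Lambda^c_\psi$ of Proposition \ref{MAINFIOPROP} is empty, the trace $S^c(t,\psi)$ has no singularities, and the smoothed sums $N^c_{\psi,\rho,H}(\lambda)$ decay rapidly; the paper points to the proof of Theorem \ref{main 3} for this. The common geometric input is just that orthogonal projection $\pi_H$ cannot increase covector length. You phrase exactly this fact as a semiclassical elliptic estimate: $\phi_j$ is microsupported on $\{|\xi|_g=1\}$, so $\gamma_H\phi_j$ is microsupported in $B^*H$, and the cutoff $\chi(h\sqrt{-\Delta_H})$ lives strictly outside $B^*H$, forcing $O(h^\infty)$ overlap. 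This is cleaner and more self-contained, and it gives the individual Fourier-coefficient bound directly, whereas the paper's route most naturally yields only the summed Kuznecov--Weyl rapid decay and then deduces pointwise smallness.

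Two technical points should be tightened if you write this in full. First, your $\chi$ is not compactly supported, so the Dimassi--Sj\"ostrand functional calculus does not apply to it as stated; instead write $\chi=1-\tilde\chi$ with $\tilde\chi\in C_c^\infty(\R)$ equal to $1$ on $[-1-\epsilon/4,\,1+\epsilon/4]$, and prove the equivalent statement that $\gamma_H\phi_j-\tilde\chi(h\sqrt{-\Delta_H})\gamma_H\phi_j=O(h^\infty)$ in $L^2(H)$. Second, the na\"{\i}ve wavefront calculus for $\gamma_H$ as an FIO fails at $N^*H$ since $S^*M\cap \dot N^*H\neq\emptyset$; your Fermi-coordinate route sidesteps this precisely because you localize the tangential frequency $\eta$ in $\{|\eta|_{g_H}\geq 1+\epsilon/8\}$, where $\tau^2+|\eta|^2-1$ is uniformly elliptic in $\tau$, so no conormal directions are involved. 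Commuting the tangential cutoff $B(y,hD_y)$ through $-h^2\Delta_M-1$, absorbing the $O(h)$ commutators, and restricting to $t=0$ is routine elliptic bootstrapping but should be spelled out. With those fixes, and your Weyl-law counting for the second assertion, the argument is complete.
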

	 
	 The proof is contained in the proof of Theorem \ref{main 3}. The main point of the proof is that,  if one uses a cutoff $\psi( \mu_k - c \lambda_j)$ with $c > 1$, then there
	 $N_{H, \psi}^c(\lambda)$ is rapidly decaying. This is also proved in detail in \cite{Xi} for the case $\dim H = 1$.
	 
	 We will not comment on this issue further in this article.   As Lemma \ref{FORBIDDEN} indicates,   $c =1$ is an `interface' in the spectral asymptotics, separating an allowed and
	 a forbidden region. It would be interesting to study the interface asymptotics  around $c=1$, i.e. the transition
	 from the asymptotics of Theorem \ref{main 2} to trivial asymptotics for $c > 1$;  
	  we plan  to study the scaling   asymptotics around $c=1$ in future work.

 \subsection{Geometric objects: $(c, s,t)$ bi-angles}\label{GEOMSECT}
 
We now give the geometric background to Theorem \ref{main 2} and Theorem \ref{main 3}, in particular
defining the relevant notion of `cleanliness' and explaining the coefficients.  We will need some further notation (see \cite{TZ13} for further details). Let $G_M^t$, resp. $G^t_H$ denote the geodesic flow of $(M,g)$ on $\dot{T}^*M$, resp. $\dot{T}^* H$  (since the flow is homogeneous, and $S^*M$ is invariant, 
 it is sufficient to consider the flow on $S^*M$).

 \begin{remark} \label{NOTATION} Notational conventions: For any manifold $X$ we denote by 
 $\dot{T}^* X = T^* X \backslash 0$ the punctured cotangent bundle. All of the canonical relations in this paper
 are homogeneous and are subsets of $\dot{T}^* X$.   \end{remark}

 We denote by $S^*_H M$ the covectors $(y, \xi) \in S^*M$ with footpoint $y \in H$, and by
 $\pi_H(y, \xi) = (y, \eta) \in B^*H$ the orthogonal projection of $\xi$  to the unit coball  $B_y^*H$ of $H$ at $y$. 
 We also denote by 
 $T^c_H M$  the cone  of covectors in $\dot T^*_HM$ making an angle $\theta$ to $T^* H$ with 
 $\cos \theta = c$. It is  the cone through
 \begin{equation} \label{SHc} 
 	S^c_H M = \{(y, \xi) \in S^*_H M: |\pi_H \xi| = c \}.
\end{equation}

 
 \begin{definition} \label{BIANGLEDEF}
  By a $(c, s, t)$-bi-angle through $(q, \xi)   \in S^c_H M$,  we mean a periodic,  once-broken orbit of the composite  geodesic flow solving the equations,  
 \begin{equation}  \label{EQ} \begin{array}{l} G_H^{ - s} \circ \pi_H \circ G_M^{cs + t} (q, \xi) = \pi_H(q, \xi),\;\; (q, \xi)  \in S^c_H M 
\end{array} \end{equation}
 Thus,  a bi-angle is a pair of geodesic arcs with common endpoints, for which the $M$-length is $cs +t$ and whose $H$ length is $s$.
 \end{definition}

Note that the geometry changes considerably when  $c=1$  and $H$ is totally geodesic. In this case, when $t=0$,  a $(1, s, 0)$  bi-angle is a geodesic arc of $H$, traced forward for time $s$ and backwards for time $s$.   The consequences are explored in \cite{WXZ+}.

    The projection of a $(c, s, t)$  bi-angle   to $M$ consists of a geodesic arc of $M$ of length $ct +s$, with both endpoints on $H$,   making the angle
 $\theta$ with $\cos \theta = c$ at both endpoints $q$ resp. $ q'$, and an  $H$-geodesic arc of oriented  $H$-length $ -s$  with initial velocity
 $(q, \pi_H \xi)$ and with terminal velocity $(q', \pi_H G^t_M(q, \xi))$.  Equivalently,  let $\gamma^M_{x, \xi}$ denote the geodesic of $M$ with initial data $(x, \xi) \in S^*_x M$,
 and let $\gamma^H_{y,\eta}$ denotes the  geodesic of $H$ with initial data $(y, \eta) \in S^*H$. Then the
 defining property of a $(c, S, T)$- bi-angle is that it consists of a geodesic arc $\gamma^M_{x, \xi}$ of $M$ of  length $T$, and a geodesic arc $\gamma^H_{x, \eta}$  of $H$ of   length $S$,
 such that: 
 
 $$\left\{ \begin{array}{l} 

\gamma^M_{x, \xi}(0)= x = \gamma^H_{x, \eta}(0) \in H, \pi_H \dot{\gamma}^M_{x, \xi}(0) =  \dot{\gamma}^H_{x, \eta}(0); \\ \\ 

| \dot{\gamma}^M_{x, \xi}(0)| = 1,   | \dot{\gamma}^H_{x, \eta}(0)| = c; \\ \\ 

\gamma^M_{x, \xi}(T)=  \gamma^H_{x, \eta}(S) , \pi_H \dot{\gamma}^M_{x, \xi}(T) =  \dot{\gamma}^H_{y, \eta}(S).
 \end{array}. \right. $$
Some examples on the sphere $S^2$ are given in Section \ref{SHARPSECT}.

We denote the set of all solutions with a fixed $c$ by
\begin{equation} \label{gcalc}
\gcal_c = \{(s, t, q, \xi) \in \R \times \R \times S^c_H M: \eqref{EQ}  \text{ is satisfied}\}.
\end{equation}
For $t$ fixed, we  also define 
\begin{equation} \label{gcalct}
	\gcal_c^t  = \{(s,  y, \xi) \in \R  \times S_H^cM: (s, t, y, \xi) \in \gcal_c\}.
\end{equation} 
We  decompose  $\gcal_c$  into the subsets,
$$\gcal_c = \gcal_c^0 \bigcup \gcal_0^{\not=0}, \;\;\; \gcal_c^0 = \gcal_c^{(0,0)} \bigcup \gcal_c^{(0, \not=0)}, $$ where (cf. \eqref{EQs=0})
  \begin{equation} \label{gcalc0} \left\{ \begin{array}{l}  \gcal_c^0 = \{(s, 0, y, \xi) \in  \R \times S_H^cM: \eqref{EQ}  \text{ is satisfied}\}, \\ \\
   \gcal_c^{0,0} \simeq S^c_H M= \{(0,0, y, \xi) \in   S_H^cM: \eqref{EQ}  \text{ is satisfied}\} \\ \\ 
   \gcal_c^{\not= 0}  = \{(s,t, y, \xi) \in \R \times   \R \times S_H^cM: t \not= 0,  \eqref{EQ}  \text{ is satisfied}\}. \end{array} \right.  \end{equation} 
   
The principal term of \eqref{cpsi} only involves the set $ \gcal_c^0$.
   When $c < 1$, or when $c=1$ and $H$ has non-degenerate
  second fundamental form,  the equation cannot hold for small enough $s$ because the $M$-geodesic
  between the endpoints must be shorter than the $H$ geodesic.  More formally, we state,
  
\begin{lemma} \label{ISOLATED}  When $0 < c < 1$, or if $c =1$ and $H$ has non-degenerate second fundamental form,  $\gcal_c^{0,0}$   is a connected component of $\gcal_c^0$. If  $H$ is totally geodesic and $0 < c < 1$,
$\gcal_c^{0,0}$ is  also a connected component, i.e. there do not exist any $(c, s, 0)$ bi-angles when $s \not= 0$ is sufficiently small (depending on $c$). 
When $c=1$ and $H$ is totally geodesic, $\gcal_1^{0,0}$ is  not a connected component of $\gcal_1^0$. 
\end{lemma}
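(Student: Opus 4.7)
My plan is to reduce the claim in each case to a quantitative estimate on how fast the $M$-geodesic $\gamma^M(t) := \pi_M G_M^t(q,\xi)$ leaves $H$, uniformly in $(q,\xi) \in S_H^c M$. Once one knows that $\gamma^M(cs) \notin H$ for all $0 < |s| < \delta$ (uniformly in the base point), the projection $\pi_H G_M^{cs}(q,\xi)$ is undefined, so $(s, 0, q, \xi) \notin \gcal_c^0$, and this forces $\gcal_c^{0,0}$ to be both open and closed in $\gcal_c^0$. Closedness is trivial since $\{s = 0\}$ is closed; openness is exactly the uniform estimate. Conversely, to show $\gcal_1^{0,0}$ is not a connected component when $H$ is totally geodesic, I would exhibit an explicit one-parameter family in $\gcal_1^0$ accumulating at it.

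For $0 < c < 1$, I would decompose $\xi = \pi_H\xi + \xi_\perp$ with $|\pi_H\xi| = c$ and $|\xi_\perp| = \sqrt{1-c^2} > 0$. In Fermi normal coordinates about $q$ adapted to $H$, a Taylor expansion of $\gamma^M$ yields
\[
\dist\bigl(\gamma^M(t), H\bigr) = \sqrt{1 - c^2}\,|t| + O(t^2),
\]
with the error uniform in $(q,\xi) \in S_H^c M$ by compactness of the latter. Hence there exists $\delta > 0$ so that $\gamma^M(t) \notin H$ for $0 < |t| < \delta$, uniformly, and then no $(s, 0, q, \xi)$ with $0 < |s| < \delta/c$ lies in $\gcal_c^0$. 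This argument never references the second fundamental form and therefore covers both the totally geodesic and the non-totally-geodesic cases when $c<1$.

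For $c = 1$ with non-degenerate second fundamental form $II$, tangentiality of $\xi$ annihilates the first-order term, and the leading behavior is governed by the standard identification of the normal component of $\ddot\gamma^M(0)$ with $II(\xi^\sharp, \xi^\sharp)$. Strict non-degeneracy of $II$ together with compactness of $S_H^1 M$ produces a uniform lower bound $|II(\xi^\sharp, \xi^\sharp)| \geq c_0 > 0$ for $(q,\xi) \in S_H^1 M$, whence
\[
\dist\bigl(\gamma^M(t), H\bigr) \geq \tfrac{c_0}{4}\, t^2
\]
for $|t|$ small and uniformly in $(q,\xi)$, and the isolation argument proceeds as before. Finally, for $c = 1$ with $H$ totally geodesic, the defining property gives $G_M^t(q,\xi) = G_H^t(q,\xi)$ for every $(q,\xi) \in S^*H = S_H^1 M$ and every $t$, so \eqref{EQ} collapses to $\pi_H(q,\xi) = \pi_H(q,\xi)$, which holds for all $s \in \reals$. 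Thus $\gcal_1^{0,0}$ appears only as the $s=0$ slice of a higher-dimensional family of solutions and fails to be a connected component.

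The main technical point is establishing uniformity over $(q,\xi)$ in the two estimates; this is where compactness of $S_H^c M$ and strict non-degeneracy of $II$ (in the relevant case) enter. Beyond these, the proof uses only Taylor expansion in Fermi coordinates together with the standard identification of the normal second derivative of an $M$-geodesic with the second fundamental form.
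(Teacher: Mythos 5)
Your argument is sound and reaches the correct conclusions in all three cases, but it is organized differently from the paper's (brief) indication. The paper's hint before the lemma is a length comparison: in a $(c,s,0)$-bi-angle the $M$-arc has length $c|s|$ and the $H$-arc has length $|s|$, both joining the same pair of endpoints in $H$; for $|s|$ small the minimizing $M$-distance between those endpoints is $|s| + O(s^2)$ (since $\mathrm{dist}_M$ and $\mathrm{dist}_H$ agree to first order for nearby points of $H$), forcing $c|s| \geq |s|(1 - O(|s|))$, which contradicts $c < 1$. You instead estimate how fast the $M$-geodesic leaves $H$: since $|\xi_\perp| = \sqrt{1 - c^2} > 0$, $\mathrm{dist}(\gamma^M(t), H)$ grows linearly in $t$, so the $M$-geodesic cannot hit $H$ again at time $cs$ for $0 < |s| < \delta$, uniformly over the compact set $S^c_H M$. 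Both routes quantify the same fact — a direction with $|\pi_H \xi| = c < 1$ escapes $H$ at a definite rate — and both need the same compactness. Your version has the small advantage that the bi-angle equation fails already because the second endpoint is not on $H$, before one even asks whether the projected tangent vectors match. The totally geodesic $c=1$ case is handled correctly: the collapse of \eqref{EQ} to a tautology on all of $\R \times S^*H$ is exactly right.

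One cautionary remark on the $c=1$ case with ``non-degenerate second fundamental form'': your lower bound $\mathrm{dist}(\gamma^M(t),H) \geq \tfrac{c_0}{4}t^2$ rests on a uniform bound $|II(\xi^\sharp,\xi^\sharp)| \geq c_0 > 0$ over $S^*H$, which holds if $II$ is sign-definite but fails if $II$ is merely non-degenerate as a bilinear form and indefinite (a saddle hypersurface): along asymptotic directions $II(\xi,\xi) = 0$ and the quadratic estimate collapses. The lemma is presumably still true in that case — the geodesic generically departs $H$ at third order — but your proof would need a supplementary argument there. Since the paper does not define the term and defers this case to future work, this is a minor gap, though it is worth flagging which reading of ``non-degenerate'' your argument actually uses.
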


 In general, the  order of magnitude of
$N^{c}_{\psi, H}(\lambda)$  depends  on the dimension of \eqref{gcalc0} and on its symplectic  volume measure. 
\begin{definition} For $0< c < 1$  the symplectic volume ${\rm Vol}(\gcal_c^0)$  of \eqref{gcalc0} is the Euclidean measure of  $\gcal_c^0.$ The pushforward
volume form acquires an extra factor $(1 - c^2)^{-\half}$ (see \cite{WXZ20} for more details).  \end{definition}

 The additional components only contribute to the main term of the asymptotics, when they have the same dimension as $\gcal_c^{0,0}$.

 \begin{definition} \label{DOMDEF} For $0 < c < 1$ define the `principal component' of $\gcal_c^0$ to be $\gcal_c^{0, 0} $.
 We say that the principal component  is {\it dominant} if $\dim \gcal_c^{0,0} $ is strictly 
  greater than any other component of $\gcal_c^0$. 
\end{definition}

Examples on spheres illustrating
the various scenarios are given in Section \ref{SECTEX}. If $c \in {\mathbb Q},\, c < 1$, then $G_{\Ss^d} ^{-s}$ and $G_{\Ss^n}^{c s}$ have an arithmetic progression of common periods, hence there
are infinitely many components of the fixed point set \eqref{EQs=0}  of the same dimension as $\gcal_c^{0,0}$, all contributing to the main term
of the asymptotics.
For instance, if $H \subset \Ss^2$ is a latitude circle, then for $0 < c \leq 1$, $\gcal_c^{0,0}$ is not dominant, due to periodicity of
the geodesic flow and of rotations (see Section \ref{SHARPSECT}). On a negatively curved surface, $\gcal_c^{0,0}$ is dominant.
Note that $\gcal_c^0 \cap \{0\} \times \R_s \times S^*H$ is a single component when $c=1$ and $H$ is totally geodesic so that the
notion of principal and dominant is vacuous then.
When  $c=1$  \eqref{gcalc} consists
  of bi-angles formed by an $M$ geodesic hitting $H$ tangentially at both endpoints, closed up by an $H$-geodesic arc through the same endpoint
  velocities.   
      When $c = 0$, $\xi = \nu_y$ is the unit (co-)normal and \eqref{gcalc} consists of $M$-geodesic arcs hitting $H$ orthogonally 
  at both endpoints. These are the arcs relevant to the original Kuznecov formula of \cite{Zel92}.

   To eliminate the contribution of non-principal components when $0 < c < 1$, we assumed in Theorem \ref{main 2} that   $\supp \hat{\psi}$ is `sufficiently small'. We now define
   the term more precisely.   
   
   \begin{definition}\label{SUFFSMALLDEF}  When $0 < c < 1$, we  say that $\supp \hat{\psi}$ is sufficiently small if $\gcal_c^{0,0}$ is the only component of $\gcal_c^0$ with values $s \in \supp \hat{\psi}$.  \end{definition}

\subsection{Cleanliness and Jacobi fields}\label{CLEANJF}

  As in most asymptotics problems, we will be using the method of stationary phase, mainly implicitly when composing canonical relations. This requires
  the standard notion of cleanliness from \cite{DG75}.
  
\begin{definition} \label{CLEAN}
We say that \eqref{gcalc}, resp. \eqref{gcalc0}, is {\it clean} if $\gcal_c$, resp. $\gcal_c^0$, is a submanifold of $\R \times \R\times S^c_H M$, resp. $\R \times S^c_H M$, and if its tangent space at each point is the subspace fixed by $D_{\zeta}  G_H^{ - s} \circ \pi_H \circ G_M^{cs + t} $ (resp. the same with $s = 0$), where $\zeta$ denotes the $S^c_H M$ variables.
\end{definition}

  Some examples are given in Section \ref{HSECT}.   It is often not very difficult to determine whether  $\gcal_c, \gcal_c^0$ are manifolds. The tangent cleanliness condition is often difficult to check. 
It  may be stated in terms of {\it  bi-angle Jacobi fields}, as follows.
 A Jacobi field  along a geodesic arc $\gamma$ is the  vector field $Y(t)$ along $\gamma(t)$ arising from a 1-parameter variation $\alpha(t; s)$ 
 of geodesics, with $\alpha(t; 0) = \gamma(t)$.  An $(S, T)$  bi-angle consists of an $H$-geodesic arc $\gamma^H(t) $ of length $S$ and
  an $M$ geodesic arc $\gamma^M(t)$ of length $T$  which  have the same initial and terminal point, and such that the  projection of the initial and terminal velocities to  $\gamma^M(T)$  equal those of  $\gamma^H(S) $.  A bi-angle Jacobi field $Y(t)$  arises as  the variation vector field of a  one-parameter variation $(\gamma_{\epsilon}^M(t), \gamma_{\epsilon}^H(t) $)  of
  $(S(\epsilon), T(\epsilon)) $- bi-angles.  It consists of a  Jacobi field $J_M(t)$ along $\gamma^M(t) $ and
  a Jacobi field $J_H(t) $ along $\gamma^H(t) $ which are compatible at the endpoints.  Namely,  if we differentiate in $\epsilon$ the equations  $$
  \begin{array}{l} \gamma^H_{\epsilon} (0) = \gamma^M_{\epsilon} (0),
  \gamma^H_{\epsilon} (S(\epsilon))  = \gamma^M_{\epsilon}  (T(\epsilon) ), \\ \\  \pi_* \dot{\gamma}^M_{\epsilon} (0) = \dot{\gamma}^M_{\epsilon}(0),\;\;
  \pi_* \dot{\gamma}^M(S(\epsilon)) = \dot{\gamma}^M(T(\epsilon)), \ |\dot{\gamma}^H_{\epsilon} (0)| = c \end{array}$$
  at $\epsilon =0$ we get 
  $$J_H(0) = J_M(0), \;\; J_H (t_0) \dot{S} = J_M(T) \dot{T},$$
  together with two equations for $\frac{D}{Dt} J_H, \frac{D}{Dt} J_M.$  
  
  \begin{definition} A bi-angle Jacobi field is a pair $(J_H(t), J_M(t))$ of Jacobi fields, as above,  along the arcs of the bi-angle which satisfy
  the above compatibility conditions at the endpoints. \end{definition}  
  
  In defining variations of bi-angles, we may allow the angle parameter $c$ to vary with $\epsilon$ in a variation, or to hold it fixed. In Definition \ref{CLEAN}, the parameter $c$ is held
  fixed, and therefore all variations must have the same value of $c$.
  As with standard Jacobi fields, the bi-angle Jacobi field  arises  by varying the initial tangent vector $(x, \xi) \in S^*_x M, x \in H$
  along $S^*_H M$. A `horizontal' bi-angle Jacobi field is one that arises by varying $x \in H$, and a vertical bi-angle Jacobi field arises
  by fixing $x$ but varying the initial direction $\xi$. 
  
  \begin{remark} The vectors in the subspace fixed by $D G_H^{-s} \circ \pi_H \circ G_M^{cs +t} (\zeta) = \pi_H(\zeta) $  always correspond
  to bi-angle Jacobi fields along the associated bi-angle.  Lack of cleanliness occurs if there exists a bi-angle Jacobi field which
  does not arise as the variation vector field of  a variation of $(c, S, T)$-bi-angles of $(M, g, H)$. \end{remark}

Cleanliness is often difficult to verify. We  refer to Section \ref{HSECT} for further discussion and un-clean examples.
    
\subsection{Outline of the proof of Theorem \ref{main 2} and further results} To prove Theorem \ref{main 2} we first prove the related result where the indicator
functions are replaced by smooth test functions.  The exposition will set the stage for the further  results (Theorem \ref{main 5} and
Theorem \ref{main 3}).

The sharp sums over
 $\lambda_k$ in  \eqref{c}  and half-sharp sums \eqref{cpsi}  may be replaced by `twice- smoothed' sums with shorter windows if we introduce a second eigenvalue cutoff $\rho \in \scal(\R)$ with
 $\hat{\rho} \in C_0^{\infty}(\R)$ and define,  \begin{equation} \label{cpsirho}
 N^{c} _{\psi, \rho, H  }(\lambda): = 
\sum_{j, k}  \rho(\lambda - \lambda_k) \psi( \mu_j - c \lambda_k)  \left| \int_{H} \phi_j \overline{\psi_k}dV_H \right|^2.
 \end{equation}
 Under standard `clean intersection hypotheses,' the sums \eqref{cpsirho} admit complete asymptotic expansions. They are the raw data 
 of the problem, to which the methods of Fourier integral operators apply. To obtain the sharper results,  we apply some Tauberian
 theorems, first (when $\psi \geq 0$)  to replace $\rho$ by the corresponding indicator function to obtain two-term asymptotics for \eqref{cpsi}. Then, in Theorem \ref{main 2},  
 we apply a second Tauberian theorem to replace $\psi$ (when possible)  by the corresponding indicator function. In the following, we assume that
 $\supp\hat\psi$ is `sufficiently small'  as defined in Definition \ref{SUFFSMALLDEF}.

To study the asymptotics of \eqref{cpsirho},  we express the left side as the oscillatory integral, 
\begin{equation}\label{SpsiDEF}
\begin{array}{lll}
N^{c} _{\psi, \rho, H  }(\lambda) & =  & \int_{\R} \hat{\rho}(t) e^{-it \lambda} S^c(t, \psi)dt, \;\; \text{ where} \\&&\\ 
S^c(t, \psi): & = &  \sum_{j,k} e^{it \lambda_j } \psi (\mu_k-c \lambda_j) \left| \int_H \phi_{j,k}(x,x) dV_H(x) \right|^2, \end{array}
\end{equation}
 show that $S^c(t, \psi)$ is a Lagrangian distribution on $\R$,  and determine its   singularities (Proposition \ref{MAINFIOPROP}).
 
 \begin{definition} \label{SOJOURNDEF} Let $\Sigma^c(\psi ) = \operatorname{sing\, supp} S^c(t, \psi)$ be the set of singular points of $S^c(t,\psi)$. $\Sigma^c(\psi)$ is called the
 set of   `sojourn times', and consists 
 of $t$ for which there exist solutions of \eqref{EQ} with $s \in \supp \hat{\psi}.$ 

\end{definition} 
We then have,
\begin{theorem} \label{main 4} Let $\dim M = n,\, \dim H = d$. Let $\psi, \rho \in \scal(\R)$ with $\hat{\psi}, \hat{\rho} \in C_0^{\infty}(\R)$.  Assume that
$\supp \hat{\rho} \cap \Sigma^c(\psi) = \{0\}$ and  $\hat{\rho}(0) =1$. Also assume that
$\supp \hat{\psi}$ is sufficiently small in the sense of Definition \ref{SUFFSMALLDEF} . Let  $c \in (0,1)$  and assume that $\gcal_c^0$ is clean in the sense of Definition 
\ref{CLEAN}.    Then, there exists a complete asymptotic expansion of $N^{c} _{\psi, \rho, H  }(\lambda)$ with principal terms,
$$ N^{c} _{\psi, \rho, H  }(\lambda) = C_{n, d} \; a_c^0(H, \psi) \lambda^{n-2 }  + O( \lambda^{n-3}), \qquad  (0 < c < 1). 
$$
where the leading coefficient is given by \eqref{acHpsi0}. 
\end{theorem}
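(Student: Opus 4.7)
The plan is to express $S^c(t,\psi)$ as a trace of a composition of Fourier integral operators, identify its Lagrangian structure on $\R$, and extract the asymptotics of $N^c_{\psi,\rho,H}(\lambda)$ by Fourier inversion against the singularity at $t=0$. First, applying Fourier inversion to $\psi(\mu_k - c\lambda_j)$ and using the spectral representation of the half-wave propagators $U_M(\tau) = e^{i\tau\sqrt{-\Delta_M}}$ and $U_H(s) = e^{is\sqrt{-\Delta_H}}$, I would rewrite
\[
S^c(t,\psi) \;=\; \frac{1}{2\pi}\int \hat{\psi}(s)\;\Tr_{L^2(H)}\!\bigl[\gamma_H\, U_M(t-cs)\, \gamma_H^{*}\, U_H(s)\bigr]\,ds.
\]
This presents $S^c(\cdot,\psi)$ as the trace of a composition of four FIOs---the restriction $\gamma_H$, the $M$-wave propagator, its coextension $\gamma_H^{*}$, and the $H$-wave propagator---integrated against $\hat{\psi}(s)\,ds$. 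By Proposition~\ref{MAINFIOPROP}, applied under the cleanliness hypothesis on $\gcal_c$, $S^c(\cdot,\psi)$ is a Lagrangian distribution on $\R$ whose singular support is the sojourn-time set $\Sigma^c(\psi)$, consisting of those $t$ arising from $(c,s,t)$ bi-angles with $s\in\supp\hat{\psi}$.

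The hypotheses $\supp\hat{\rho}\cap\Sigma^c(\psi)=\{0\}$ and $\hat{\rho}(0)=1$ then reduce
\[
N^c_{\psi,\rho,H}(\lambda) \;=\; \int \hat{\rho}(t)\,e^{-it\lambda}\,S^c(t,\psi)\,dt,
\]
modulo $O(\lambda^{-\infty})$, to the contribution of the singularity of $S^c(t,\psi)$ at $t=0$. By Definition~\ref{SUFFSMALLDEF}, the only component of $\gcal_c^0$ giving rise to a critical point at $(s,t)=(0,0)$ is the principal component $\gcal_c^{0,0}\simeq S^c_HM$ (of dimension $n+d-2$), which by assumption is clean.

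The leading asymptotic then comes from a Duistermaat-Guillemin clean-intersection stationary-phase calculation for the composition $\gamma_H U_M(t-cs)\gamma_H^{*}U_H(s)$ traced over $L^2(H)$ and integrated against $\hat{\psi}(s)\,ds$. The canonical relations compose cleanly along $\gcal_c^{0,0}$, and the dimension of this fixed set together with the phase excess produces, upon Fourier inversion against $\hat{\rho}(t)e^{-it\lambda}$, the power $\lambda^{n-2}$ and a dimensional constant $C_{n,d}$. The principal-symbol calculation proceeds as follows: the wave half-densities produce the Liouville measure on $S^c_HM$; the operators $\gamma_H,\gamma_H^{*}$ contribute the Jacobian of the orthogonal projection $\pi_H$ restricted to the $c$-cone $\{|\pi_H\xi|=c\}\subset S^*_yM$, which evaluates to $c^{d-1}(1-c^2)^{(n-d-2)/2}$ when the measure on $S^c_HM$ is fibered over $H$; and stationary phase in $s$ about $s=0$ produces the factor $\hat{\psi}(0)$. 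Subprincipal corrections to the composition symbol give the complete asymptotic expansion with remainder $O(\lambda^{n-3})$.

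The principal difficulty is the four-fold clean-composition bookkeeping: verifying that the composition of the canonical relations of $\gamma_H$, $U_M$, $\gamma_H^{*}$, $U_H$ has the predicted excess, correctly tracking Maslov factors through the trace, and isolating the Jacobian $(1-c^2)^{(n-d-2)/2}$ from the geometry of projecting $S^*M|_H$-covectors onto the $c$-cone---a step which also justifies why the leading coefficient $a_c^0(H,\psi)$ takes the precise form in \eqref{acHpsi0}.
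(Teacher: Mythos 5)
Your proposal is correct and is in substance the same as the paper's proof, which also goes: express $N^c_{\psi,\rho,H}(\lambda)=\int\hat\rho(t)e^{-it\lambda}S^c(t,\psi)\,dt$; show $S^c(t,\psi)$ is a Lagrangian distribution on $\R$ (Proposition~\ref{MAINFIOPROP}); and recover the expansion from the $t=0$ singularity by Fourier inversion, as in Lemma~\ref{CONVOLUTION}. The only real difference is formulational: you write $S^c(t,\psi)$ directly as a trace $\frac{1}{2\pi}\int\hat\psi(s)\,\Tr_{L^2(H)}[\gamma_H U_M(t-cs)\gamma_H^* U_H(s)]\,ds$, whereas the paper works on $M\times H$ with the two commuting operators $P=\sqrt{-\Delta_M}\otimes I$ and $Q_c=cP_M-P_H$, forms $(\gamma_H\otimes I)e^{itP}\psi(Q_c)(\gamma_H\otimes I)^*$, and then applies the double-diagonal pullback $(\Delta_H\times\Delta_H)^*$ and the pushforward $\Pi_*$. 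These are equivalent expressions for the same distribution, so the choice is cosmetic. Two small quibbles: (i) you suppress the microlocal cutoffs $B_\epsilon$ away from glancing and conormal directions (Lemmas~\ref{LRLEMLEM}--\ref{LRLEMPROP}); the intermediate compositions involving $\gamma_H,\gamma_H^*$ are only guaranteed clean after this cutoff, and the cutoff is then dropped at the pushforward stage, so your four-fold composition does require this extra device even if it disappears at the end; (ii) you attribute the factor $c^{d-1}(1-c^2)^{(n-d-2)/2}$ to "the Jacobian of $\pi_H$ restricted to the $c$-cone,'' but in fact $c^{d-1}(1-c^2)^{(n-d-1)/2}$ is the surface volume of the $c$-cone $S^c_yM\simeq S^{d-1}_c\times S^{n-d-1}_{\sqrt{1-c^2}}$, and a separate Leray factor $(1-c^2)^{-1/2}$ comes from $|\det D\pi_H|^{-1}$; the product is what you wrote, so the final coefficient is right, but the two factors should not be conflated.
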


While this theorem follows as a corollary to \cite[Theorem 1.4]{WXZ20}, our proof here also admits a more general statement (see Theorem \ref{main 5}). Theorem \ref{main 4} is proved in Section \ref{ASYMPTOTICSECT}.
Note that the order of asymptotics  is $1$ less than in Theorem \ref{main 2} and Theorem  \ref{main 3}. This is due to the fact that
the $\lambda$ intervals are `thin' in Theorem \ref{main 4} and `wide' in the previous theorems.

To obtain the `sharp' Weyl asymptotics of Theorem \ref{main 2}  in the $\lambda$-variable, we need to replace $\rho$ by an indicator function. 
This is done in Section \ref{SS Tauberian} by a standard Tauberian theorem.

\subsection{Asymptotics when $\supp  \hat{\psi}$ is arbitrarily large}\label{supplarge}

Theorem \ref{main 2} and Theorem \ref{main 4} both assume that $\supp \hat{\psi}$ is sufficiently small in the sense
of Definition \ref{SUFFSMALLDEF}. They readily admit generalizations to any $ \hat{\psi} \in C_0^{\infty}$ where we take into
account the additional components of $\gcal_c^0$ coming from large $s$.  Only the components of $\gcal_c^0$ of the same  maximal dimension as the
principal component contribute to the principal term of the asymptotics.

In the notation of   \cite[Theorem 4.5]{DG75}, we are assuming that the set of $(c, s, 0)$-bi-angles with $t = 0$  is a union of clean components $Z_j(0)$ of dimension $d_j$. In our situation $Z_j(0)$ is a component of  $ \gcal^0_c$. 
Then, for $t$ sufficiently close to $0$,  each $Z_j(0)$  gives rise to a  Lagrangian distribution $\beta_j(t)$ on $\R$ with singularities only at $t=0$, such that, \begin{equation}\label{betaeq}  \begin{array}{l} S^c(t, \psi) = 
\sum_j \beta_j(t), \;\; \text{ where} \;\; \beta_j(t) = \int_{\R} \alpha_j(s) e^{- i s t} ds, \\ \\ \text{ with}\;\; \alpha_j(s) \sim (\frac{s}{2 \pi i})^{ -1 + \half (n -d)+\frac{d_j}{2}}\;\; i^{- \sigma_j} \sum_{k=0}^{\infty} \alpha_{j,k} s^{-k}, \end{array} \end{equation}
where $d_j $ is the dimension of the component $Z_j(0) \subset \gcal_c^0$.

\begin{definition}\label{MAXDEF}  We say that a connected component $Z_j(0)$ of $\gcal_c^0$ is {\it maximal} if its dimension is the same as $\gcal_c^{0,0}$. We denote
the set of maximal components by $\{Z_j^m(0) = \gcal_{c, j}^{0, m}\}_{j=1}^{\infty}$. When $0 < c < 1$, $s$ is constant on each maximal component and we
denote these values of $s$ by $s_j^m$. 
\end{definition}

Existence of a maximal component other than $\gcal_c^0$ implies that there exists periodicity in the geodesic flows
$G^t_M$ and $G_H^t$ (see  Section \ref{CLUSTERSECT}),
 and that $c$  is a special value where there exists periodicity in the flows $G^{-cs}_H$.
For instance, when $H = \Ss^d \subset \Ss^n$, and  when $c = \frac{p}{q} \in {\mathbb Q}$, there exist maximal components when $s = 2 k \pi q$ with 
$k = 1, 2, \dots$, since then the left side of \eqref{EQs=0} is the identity, 
$G^{-s}_{\Ss^d} \circ S^{cs }_{\Ss^n} = G^{-2 k \pi q }_{\Ss^d} \circ S^{2 k \pi p  }_{\Ss^n} = {\rm Id} \times {\rm Id}.$  This is the geometric reason behind
the discontinuous behavior of the jumps in Section \ref{S2SPARSE}. 
Recall $\Sigma^c(\psi)$ from Definition \ref{SOJOURNDEF} for the following theorem.

\begin{theorem} \label{main 5} Let $\dim M = n, \dim H = d$. Let $\psi, \rho \in \scal(\R)$ with $\hat{\psi}, \hat{\rho} \in C_0^{\infty}(\R)$. Assume that
 $\hat{\rho}(0) =1$ and that
$\supp \hat{\rho} \cap \Sigma^c(\psi) = \{0\}$.  Let  $0 < c <1$  and assume that $\gcal_c^0$ is clean in the sense of Definition 
\ref{CLEAN}.   Then, there exists a complete asymptotic expansion of $N^{c} _{\psi, \rho, H  }(\lambda)$ with principal terms,
$$ N^{c} _{\psi, \rho, H  }(\lambda) =
\begin{array}{ll} C_{n, d} \; a_c^0(H, \psi) \lambda^{n-2 }  + O_\psi( \lambda^{n-5/2}), & 0 < c < 1
\end{array} 
$$
where the leading coefficient is given by the following sum over the maximal components $Z_j^m(0)$ of $\gcal_c^0$, 
$$a_c^0(H, \psi): =
\left(\sum_j \hat{\psi}(s_j^m) \right) \; c^{d-1} (1 - c^2)^{\frac{n-d-2}{2}}  \hcal^{d}(H).
$$
Moreover, if we replace $\rho$ by ${\bf 1}_{[0, \lambda]}$,  then 
$$ N^{c} _{\psi, H  }(\lambda)  = C_{n, d} \; a_c^0(H, \psi) \lambda^{n-1 }  + O_{\psi}( \lambda^{n-3/2}). $$


\end{theorem}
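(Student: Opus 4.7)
The plan is to extract the smoothed asymptotic from the oscillatory integral representation \eqref{SpsiDEF} by a Duistermaat--Guillemin clean composition argument that tracks all \emph{maximal} components of $\gcal_c^0$, and then to pass to the sharp Weyl sum by a positive-measure Tauberian theorem. Starting from
$$N^c_{\psi,\rho,H}(\lambda) = \int_\R \hat\rho(t)\,e^{-it\lambda}\,S^c(t,\psi)\,dt,$$
the hypothesis $\supp\hat\rho \cap \Sigma^c(\psi)=\{0\}$ confines the singular support of $S^c(t,\psi)$ inside $\supp\hat\rho$ to $t=0$, so the integral away from $0$ is $O(\lambda^{-\infty})$ and the full expansion is read off from the singularity at $t=0$.

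Near $t=0$, the cleanliness hypothesis allows us to invoke the clean composition calculus of \cite{DG75} to write $S^c(t,\psi) = \sum_j \beta_j(t)$ as in \eqref{betaeq}, summed over the finitely many connected components $Z_j(0)$ of $\gcal_c^0$ that meet $\supp\hat\psi$ in the $s$-slot, with $\alpha_j$ a classical symbol of order $-1 + \tfrac{1}{2}(n-d) + \tfrac{d_j}{2}$, $d_j = \dim Z_j(0)$. Using $\hat\rho(0)=1$, each $\beta_j$ contributes to leading order a universal constant multiple of $\alpha_{j,0}\,\lambda^{-1+(n-d)/2+d_j/2}$ to $N^c_{\psi,\rho,H}(\lambda)$. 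Since the principal component $\gcal_c^{0,0}\simeq S_H^c M$ has dimension $n+d-2$, maximal components contribute at order $\lambda^{n-2}$, while strictly smaller components contribute at most $O(\lambda^{n-5/2})$. On a maximal component $Z_j^m(0)$, the condition $0<c<1$ together with Lemma \ref{ISOLATED} forces the $s$-coordinate to be locally constant, equal to the discrete value $s_j^m$. The principal symbol $\alpha_{j,0}$ is then computed exactly as on $\gcal_c^{0,0}$ in Theorem \ref{main 4}: symplectic reduction of the canonical relation arising from the trace formula produces the geometric density $c^{d-1}(1-c^2)^{(n-d-2)/2}\hcal^d(H)$, while the $s$-Fourier integral against $\hat\psi$ is evaluated at the single point $s_j^m$. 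Summing over all maximal components yields the claimed coefficient $a_c^0(H,\psi)$.

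For the sharp Weyl form, the assumption that $\psi$ is chosen so that $dN^c_{\psi,H}$ is non-negative allows a standard positive-measure Tauberian argument (as applied in \cite{WXZ20}) to convert the smoothed asymptotic $C_{n,d}\,a_c^0(H,\psi)\lambda^{n-2}+O_\psi(\lambda^{n-5/2})$ into the stated sharp asymptotic: integration of the leading $\lambda^{n-2}$ density against the window $[0,\lambda]$ produces $\lambda^{n-1}$, and the remainder gains one power of $\lambda$ to become $O_\psi(\lambda^{n-3/2})$. The main technical obstacle lies in the symbol computation on \emph{non-principal} maximal components: one must verify that symplectic transport of the canonical half-density under $G_H^{-s_j^m}\circ \pi_H \circ G_M^{cs_j^m}$ yields the same geometric density on $Z_j^m(0)$ as on $\gcal_c^{0,0}$, so that every maximal component contributes an identical factor $c^{d-1}(1-c^2)^{(n-d-2)/2}\hcal^d(H)$ modulated only by $\hat\psi(s_j^m)$. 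A secondary point, which rests on the discreteness of the sojourn set $\{s_j^m\}$ forced by $0<c<1$, is confirming that all remaining components are genuinely of dimension at most $n+d-3$ and hence contribute only $O_\psi(\lambda^{n-5/2})$ to the remainder.
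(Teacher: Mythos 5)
Your proposal follows essentially the same route as the paper: write $N^c_{\psi,\rho,H}(\lambda)$ as $\int \hat\rho(t)e^{-it\lambda}S^c(t,\psi)\,dt$, localize to $t=0$ via the hypothesis $\supp\hat\rho\cap\Sigma^c(\psi)=\{0\}$, invoke the Duistermaat--Guillemin clean-composition calculus to decompose $S^c(t,\psi)=\sum_j\beta_j(t)$ over the components $Z_j(0)$ of $\gcal_c^0$, identify the maximal (dimension $n+d-2$) components as the ones contributing at the top order $\lambda^{n-2}$, and bound everything else by $O(\lambda^{n-5/2})$; then integrate the expansion against $\mathbf 1_{[0,\lambda]}$ and apply the positive-measure Tauberian theorem of Safarov--Vassiliev (Proposition \ref{tauberian 1}) to obtain the sharp Weyl asymptotic with remainder $O_\psi(\lambda^{n-3/2})$. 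This matches the paper's treatment via Lemma \ref{CONVOLUTION} and Section \ref{TAUBPSISECT}. Two small remarks: (i) the constancy of $s$ on each maximal component when $0<c<1$ is not a consequence of Lemma \ref{ISOLATED} per se --- it is stated separately in Definition \ref{MAXDEF} and rests on the observation that a maximal component has the same dimension as $S^c_H M$ and is carved out by a fixed-point condition that pins $s$ to a discrete set of common-period values --- but the paper asserts this at the same level of rigor as you do; (ii) you correctly flag the symbol identification $\alpha_{j,0}=\alpha_{0,0}$ on non-principal maximal components as the main point requiring verification; the paper likewise defers this to a remark, so this is an honest accounting of a shared gap rather than a flaw of your argument.
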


The exponent $n-5/2$ in the second term of the asymptotics can arise  from   components $Z_j(0)$ of dimension $d_j$ possibly one less than maximal. This drops the exponent of the top term of $\beta_j$ of \eqref{betaeq} by $1/2$. If no such components occur, the remainder is of order $\lambda^{n-3}$. 
See Section \ref{3PROOFS} for the proofs of Theorem \ref{main 4} and Theorem \ref{main 5}.

\begin{remark} The formula for $a_c^0(H, \psi)$ reflects the fact that, 
under  the cleanliness hypothesis of Definition \ref{CLEAN}, 
each maximal component $Z_j^m(0) \simeq \gcal_c^{0,0}$ must be essentially the same as the principal component modulo the change in the $s$ parameter, hence
the corresponding  leading coefficient $\alpha_{j, 0}$ in \eqref{betaeq} is the same as for the principal component. 
The only essential change to the principal coefficient in  Theorem \ref{main 4} is that it is now necessary to compute the sum of the canonical 
symplectic  volumes of all 
maximal components with $s_j^m \in \supp\; \hat{\psi}$. When $c < 1$, the set of such $s_j^m$ is discrete and thus we sum $\hat{\psi}$ over this
finite set of parameters.  See Section \ref{CLUSTERSECT} for a discussion of the periodicity properties necessary to have maximal
components. 

\end{remark}
\subsection{Two term asymptotics with small oh remainder} \label{REMAINDERSECT}

Theorem \ref{main 2}, Theorem \ref{main 4} and Theorem \ref{main 5} assume that $\supp \hat{\rho} \cap \Sigma^c(\psi) = \{0\}$ and  $\hat{\rho}(0) =1$. By  allowing general $\rho \in C^{\infty}(\R)$ with $\hat{\rho} \in C_0^{\infty}$, and by studying long-time asymptotics of $G^t_M$ and $G^t_H$, it is possible under favorable circumstances to prove two-term asymptotics of the type initiated by Y. Safarov
(see \cite{SV} for background). From Theorem \ref{main 5}, we note that the order of the second term depends on the geometry
of $\gcal_c$.  When the second term of Theorem \ref{main 5} has order $\lambda^{n-3}$,  the  two term asymptotics have the following form: for
any $\epsilon >0$, we have as $\lambda \to \infty$, 
\begin{equation}\begin{array}{lll} C_{n, d} \; a_c^0(H, \psi) \lambda^{n -  1 }  + \qcal^c_{H, \psi} (\lambda - \epsilon ) \lambda^{n-2} -
o(\lambda^{n-2}) &  \leq  N^c_{\psi, H}(\lambda)   &   \\ && \\
 \leq   C_{n, d} \; a_c^0(H, \psi) \lambda^{n -  1 }    +
\qcal^c_{H, \psi} (\lambda + \epsilon) \lambda^{n-2} + o(\lambda^{n-2}),&&
\end{array} \end{equation}
where $\qcal^c_{H, \psi}(\lambda)$ is an oscillatory function determined by the singularities for all $t$ (see Section \ref{SINGtnot0}).

 The rather unusual notion of second term asymptotics is necessary and was first proved by Safarov  for the pointwise Weyl
 law or the fully integrated (over $M$) Weyl law; see \cite{SV} and also \cite[(29.2.16)- (29.2.17)]{HoIV}.  
Depending on the periodicity properties of $G_M^t$ and $ G_H^s$, $\qcal^c_{H, \psi}(\lambda)$ can be continuous or have jumps. 
 Existence of jumps in  $\qcal^c_{H, \psi}(\lambda)$  is not a sufficient condition for jumps of size $\lambda^{n-2}$ in 
$N^c_{\psi, H}(\lambda) $ but it is a necessary one and can be used to analyze situations where maximal jumps occur.
  In view of the many possible types of phenomena discussed in Section \ref{SHARPSECT}, it is a lengthy
additional problem to prove such two term asymptotics and in particular to calculate $\qcal^c_{H, \psi}(\lambda)$ explicitly, and we defer their
study to a future article. In this section, we state some results on situations where $\qcal^c_{H, \psi}= 0$.

The next result improves the  remainder estimate in Theorem \ref{main 2} under the
dynamical hypothesis that the $t=0$ singularity is dominant in the sense of Definition \ref{DOMDEF}.

\begin{theorem} \label{main 2b} With the  notation and assumptions as in Theorem \ref{main 2}, we  assume $\hat{\psi}$ has small support. We  further assume that the singularity at $t=0$ is dominant, i.e. there
do not exist maximal components   $Z_j(T)$ for $T \not=0.$   
 Then,

$$ \begin{array}{l} N^{c} _{\psi, H  }(\lambda)  = C_{n, d} \; a_c^0(H, \psi) \lambda^{n - 1 }  + R_{\psi, H}^c(\lambda),  \;\; \text{ where}\\ \\
R_{\psi, H}^c(\lambda)    \; = \; o_{\psi} (\lambda^{n-2}),\;\;\;  J_{\psi, H}^c(\lambda)    \; = \; o_{\psi}(\lambda^{n-2}). \end{array}$$

\end{theorem}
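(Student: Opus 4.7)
The plan is to implement a two-term Tauberian argument in the spirit of Safarov (cf.\ \cite{SV}, \cite{HoIV}), exploiting the dominance hypothesis to ensure that the only contribution to the smoothed remainder of maximal order $\lambda^{n-2}$ is the $t = 0$ singularity of $S^c(t, \psi)$. The big-O estimate of Theorem \ref{main 2} will supply the Tauberian side condition, while the dominance hypothesis upgrades the smoothed asymptotic of Theorem \ref{main 5} to a small-oh statement valid for $\hat\rho$ of arbitrary compact support.

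First I would record two inputs. On the one hand, $N^c_{\psi,H}$ is monotone non-decreasing because $\psi \geq 0$, and Theorem \ref{main 2} gives
$$ N^c_{\psi,H}(\lambda + 1) - N^c_{\psi,H}(\lambda - 1) = O(\lambda^{n-2}). $$
On the other hand, for any $\rho \in \mathcal{S}(\R)$ with $\hat\rho \in C_0^\infty(\R)$ and $\hat\rho(0) = 1$, the representation \eqref{SpsiDEF} together with the symbol expansion \eqref{betaeq} exhibits $N^c_{\psi,\rho,H}(\lambda)$ as a finite sum of contributions, one per singular time $T \in \Sigma^c(\psi) \cap \supp \hat\rho$. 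At each $T$ the contribution is governed by the clean component $Z_j(T) \subset \gcal_c$: stationary phase applied to \eqref{betaeq} yields a term of order $\lambda^{(n-d+d_j)/2-1}$ with $d_j = \dim Z_j(T)$. Since $\dim \gcal_c^{0,0} = n + d - 2$ realizes the top order $\lambda^{n-2}$, the dominance hypothesis forces $d_j < n + d - 2$ for every $T \neq 0$, and each such contribution is therefore $O(\lambda^{n-5/2}) = o(\lambda^{n-2})$.

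Summing these finitely many sub-maximal contributions with the maximal contribution from $T = 0$ supplied by Theorem \ref{main 5} yields, for any admissible $\rho$,
$$ N^c_{\psi,\rho,H}(\lambda) = C_{n,d}\, a_c^0(H,\psi)\, \lambda^{n-2} + o_\rho(\lambda^{n-2}). $$
Invoking a two-term Tauberian theorem for monotone functions (e.g.\ \cite[Appendix B]{SV} or \cite[Theorem 29.2.1]{HoIV}), the side condition together with this small-oh smoothed asymptotic implies
$$ N^c_{\psi,H}(\lambda) = C_{n,d}\, a_c^0(H,\psi)\, \lambda^{n-1} + o(\lambda^{n-2}), $$
so $R^c_{\psi,H}(\lambda) = o(\lambda^{n-2})$. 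The jump estimate $J^c_{\psi,H}(\lambda) = o(\lambda^{n-2})$ is then immediate from \eqref{JUMPREM}, since the pointwise small-oh bound applies to both one-sided limits of $R^c_{\psi,H}$ at $\lambda$.

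The principal obstacle is securing the small-oh smoothed asymptotic for $\hat\rho$ of arbitrary compact support, which requires extending the cleanliness hypothesis of Definition \ref{CLEAN} from $\gcal_c^0$ to every component $Z_j(T) \subset \gcal_c$ with $T \in \supp \hat\rho$, so that \eqref{betaeq} applies at each sojourn time. Once cleanliness is in hand, the crucial geometric input is the strict dimension inequality $\dim Z_j(T) < n + d - 2$ for $T \neq 0$, which is precisely the content of the dominance hypothesis of Definition \ref{DOMDEF}. The subsequent Tauberian passage from smoothed to sharp small-oh remainder is then classical and requires no further geometric input.
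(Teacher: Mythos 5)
Your overall strategy matches the paper's: verify the side conditions of a Safarov-type Tauberian theorem, with the big-O input coming from Theorem \ref{main 2} and the small-oh input coming from the dominance hypothesis applied to the expansion of $S^c(t,\psi)$ at sojourn times $T \neq 0$ (Proposition \ref{MORESINGS}). Your dimension count ($d_j(T) < n+d-2 \Rightarrow$ contribution $O(\lambda^{n-5/2})$) is correct, and your observation that cleanliness must be assumed for all components $Z_j(T) \subset \gcal_c$ with $T \in \supp\hat\rho$, not just for $\gcal_c^0$, is a genuine and useful point that the paper's proof leaves implicit (it is buried in the hypothesis of Proposition \ref{MORESINGS}).

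However, there is a gap in the passage from the smoothed asymptotic to the sharp one. Proposition \ref{tauberian 2} concludes only that $N^c_{\psi,H}(\lambda) = N^c_{\psi,H}*\rho(\lambda) + o(\lambda^{n-2})$. To reach the stated form $N^c_{\psi,H}(\lambda) = C_{n,d}a_c^0(H,\psi)\lambda^{n-1} + o(\lambda^{n-2})$ one must also verify that $N^c_{\psi,H}*\rho(\lambda) = C_{n,d}a_c^0(H,\psi)\lambda^{n-1} + o(\lambda^{n-2})$, and your input — namely $N'*\rho(\lambda) = C_{n,d}a_c^0(H,\psi)\lambda^{n-2} + o_\rho(\lambda^{n-2})$ — is too weak for this: integrating an $o(\lambda^{n-2})$ error only controls the antiderivative to $o(\lambda^{n-1})$, leaving the $\lambda^{n-2}$ coefficient of $N*\rho$ undetermined. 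One actually needs the subprincipal ($\lambda^{n-3}$) coefficient in the full expansion of $N^c_{\psi,\rho,H}(\lambda)$ from Lemma \ref{CONVOLUTION} to vanish; if it did not, $N*\rho$ would carry a nonzero $\lambda^{n-2}$ term, and $R^c_{\psi,H}(\lambda)$ would be $\Theta(\lambda^{n-2})$ rather than $o(\lambda^{n-2})$. The paper establishes this vanishing in Section \ref{SUBPRINCIPALSECT} by a parity argument (evenness of $\psi$, $\rho$ together with the vanishing of the subprincipal symbols of $\sqrt{-\Delta_M}$, $\sqrt{-\Delta_H}$, following \cite{DG75}), and this step is essential to your argument as well.
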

To prove Theorem \ref{main 2b},  we first need to prove that the   coefficient  in the expansion of $N_{\psi, \rho, H}(\lambda)$ of the term of order $\lambda^{n-2}$ is zero. This is 
 shown in  Section \ref{SUBPRINCIPALSECT}.  From Theorem \ref{main 2b} we obtain an improved estimate on the jumps in the
 case where both $G^t_M$ and $G^s_H$ are `aperiodic', i.e. where the Liouville measure in $S^*M$, resp. $S^*H$ of the periodic orbits
 of $G^t_M$, resp. $G^s_H$ are zero. 
The following estimate follows directly from Theorem \ref{main 2b} and the same technique of bounding ${\bf 1}_{[-\epsilon, \epsilon]}$ by a test function $\psi $ with $\hat{\psi} \in C_0^{\infty}$  used in Corollary \ref{JUMPCOR}.

\begin{corollary}\label{JUMPCONJ}  If both $G^t_M$ and $G^s_H$ are aperiodic  (see Section \ref{CLUSTERSECT}), then for any $\epsilon > 0$,
$$J_{\epsilon, H}^c (\lambda_j) = o_{\epsilon} (\lambda_j^{n-2}). $$ 
\end{corollary}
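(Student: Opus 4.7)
The plan is to deduce this corollary from Theorem \ref{main 2b} by the same smooth-majorization recipe that yields Corollary \ref{JUMPCOR} from Corollary \ref{main 2cor}. The one new ingredient beyond that recipe is a verification that aperiodicity of both $G^t_M$ and $G^s_H$ forces the dominance hypothesis of Theorem \ref{main 2b}.

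For the dominance step, I would argue as follows. A maximal component of $\gcal_c$ at parameters $(s_0, t_0)$ with $(s_0, t_0) \neq (0,0)$ would, by Definition \ref{MAXDEF}, carry the full dimension $d + n - 2$ inside $S_H^c M$, so a positive-Liouville-measure set of covectors $(q, \xi) \in S_H^c M$ would satisfy
$$G_H^{-s_0} \circ \pi_H \circ G_M^{cs_0 + t_0}(q, \xi) = \pi_H(q, \xi).$$
Since $0 < c < 1$, the projection $\pi_H$ is a submersion of $S_H^c M$ onto an open subset of the sphere bundle $\{|\eta| = c\} \subset B^*H$ with compact $(n-d-1)$-sphere fibers, so pushing the above coincidence down and using Fubini one obtains positive Liouville measure of periodic points either of $G_M^{cs_0+t_0}$ on $S^*M$ (if $cs_0 + t_0 \neq 0$) or of $G_H^{-s_0}$ on $S^*H$ (if $s_0 \neq 0$). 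Either case contradicts the aperiodicity hypothesis, so only the $(s_0, t_0) = (0,0)$ maximal component survives and the dominance hypothesis of Theorem \ref{main 2b} is met.

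For the majorization step, given $\epsilon > 0$ I construct an even, non-negative Schwartz function $\psi_\epsilon$ with $\hat{\psi}_\epsilon \in C_0^\infty(\R)$ supported in a neighborhood of $0$ small enough that Definition \ref{SUFFSMALLDEF} holds, and satisfying $\psi_\epsilon(x) \geq {\bf 1}_{[-\epsilon, \epsilon]}(x)$ for all $x \in \R$. A standard choice is $\psi_\epsilon = |\chi_\epsilon|^2$ for a suitable Paley--Wiener function $\chi_\epsilon$ of Fej\'er type, rescaled so that $\psi_\epsilon \geq 1$ on $[-\epsilon, \epsilon]$; this is precisely the construction behind Corollary \ref{JUMPCOR}. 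Applying Theorem \ref{main 2b} with this test function yields $J_{\psi_\epsilon, H}^c(\lambda_j) = o_{\psi_\epsilon}(\lambda_j^{n-2})$. Since every summand $|a_{\mu_k}(\lambda_\ell)|^2$ appearing in \eqref{JDEFpsi} and \eqref{JDEF} is non-negative and $\psi_\epsilon \geq {\bf 1}_{[-\epsilon,\epsilon]}$ pointwise, a term-by-term comparison gives
$$J_{\epsilon, H}^c(\lambda_j) \;\leq\; J_{\psi_\epsilon, H}^c(\lambda_j) \;=\; o_\epsilon(\lambda_j^{n-2}),$$
which is the claimed bound.

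The main obstacle I anticipate is the first step, namely cleanly extracting periodicity of a single flow from the mixed bi-angle condition. The composition $G_H^{-s} \circ \pi_H \circ G_M^{cs+t}$ does not separate into a condition on $G^t_M$ alone or $G^s_H$ alone, and the $\pi_H$-factor must be dealt with by a symplectic-reduction/Fubini argument using the fact that $0 < c < 1$ keeps $\pi_H$ a genuine submersion on $S_H^c M$. I expect this to be carried out in the discussion of Section \ref{CLUSTERSECT}, to which the statement explicitly appeals; once that implication is available, the Tauberian majorization is entirely routine.
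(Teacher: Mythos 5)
Your overall route --- deduce the corollary from Theorem \ref{main 2b} via the same smooth majorization as Corollary \ref{JUMPCOR}, after verifying that aperiodicity forces the dominance hypothesis of Theorem \ref{main 2b} --- is exactly what the paper sketches, and the majorization half is fine: one can take $\psi_\epsilon$ to be a rescaled Fej\'er-type square so that $\psi_\epsilon \geq \mathbf{1}_{[-\epsilon,\epsilon]}$ while $\supp \hat{\psi}_\epsilon$ is as small as you like, and then the nonnegativity of the summands gives $J^c_{\epsilon,H}(\lambda_j) \leq J^c_{\psi_\epsilon,H}(\lambda_j) = o_\epsilon(\lambda_j^{n-2})$.

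The gap, which you flag yourself, lies in the dominance step, and the explicit Fubini dichotomy you propose does not repair it. A closed $(c, s_0, t_0)$-bi-angle is a loop built from an $M$-geodesic arc of length $cs_0 + t_0$ and an $H$-geodesic arc of length $s_0$ that share their two endpoints and have matching tangential projections there; neither arc has the same initial and terminal point, so neither is a periodic orbit of $G^t_M$ or of $G^s_H$. Hence a positive-measure set of closed bi-angles does not push down, by Fubini over the normal-sphere fibers of $\pi_H$ or otherwise, to a positive-measure set of periodic points of either flow. The paper's own flat-torus computation in Sections \ref{CLUSTERSECT} and \ref{FLATSECT} shows this concretely: for $\R^n/\Z^n$ with $H$ a codimension-one coordinate subtorus and $n \geq 3$, both $G^t_M$ and $G^s_H$ are aperiodic, yet every point of $S^c_H M$ lies on a closed $(c,s,t)$-bi-angle with $cs+t = (1-c^2)^{-1/2}$ (the constant return time to $S^c_H M$) and $s = c(1-c^2)^{-1/2}$, and neither arc of those bi-angles is a closed geodesic. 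What keeps Theorem \ref{main 2b} applicable in that example is that the relevant $s$-value is bounded away from zero and is therefore excluded by the small-$\supp\hat{\psi}$ hypothesis, a statement about the return correspondence to $S^c_H M$ discussed in Section \ref{CLUSTERSECT}, not about the fixed-point sets of $G_M^{cs_0+t_0}$ or $G_H^{-s_0}$ separately. The paper asserts the corollary ``follows directly'' from Theorem \ref{main 2b} without supplying the aperiodicity-to-dominance step (Section \ref{CLUSTERSECT} motivates but does not prove it), so you are mirroring the source's gap; but the concrete argument you insert in its place is not valid, and any correct proof must treat the composite $G_H^{-s} \circ \pi_H \circ G_M^{cs+t}$ as a correspondence on $B^*H$ rather than trying to disentangle the two flows.
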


The proof of Corollary \ref{JUMPCONJ} from Theorem \ref{main 2b} follows a well-known path and is sketched in  Section \ref{main 2bSECT}.

\subsection{Sharp-sharp asymptotics}\label{SHARPSHARPTHM}

Instead of the fuzzy ladder sums \eqref{cpsi}, it may seem preferable to study the sharp Weyl-Kuznecov sums,

 \begin{equation} \label{c}
 N^{c} _{\epsilon, H}(\lambda): = 
\sum_{j, k:  \lambda_j  \leq \lambda, \; | \mu_k - c \lambda_j|  \leq \epsilon}   \left| \int_{H} \phi_j \overline{\psi_k}dV_H \right|^2,
 \end{equation}
 in which we constrain the tangential modes $\mu_k$ to lie in an $\epsilon$- window around $c \lambda_j$ for $0 < c < 1$.  



\begin{theorem} \label{main 3}
$\dim M = n$, $\dim H = d$. Let  $0 < c < 1$,  and assume that $\gcal_c^0$ is clean in the sense of Definition 
\ref{CLEAN}.  Assume that  no component of $\gcal_c^0$ has maximal dimension except for the principal component (cf. Definition \ref{DOMDEF}).
Then, in the notation of Theorem \ref{main 2} - Theorem \ref{main 5}, the  Weyl - Kuznecov sums \eqref{c} satisfy:

$$
	N^c_{\epsilon, H}(\lambda) = C_{n,d} a_c^0 (H, \epsilon)  \lambda^{n - 1} + o (\lambda^{n - 1}),
$$
where
\begin{equation} \label{acHpsiep} 
a_c^0(H, \epsilon) := \epsilon c^{d-1} (1 - c^2)^{\frac{n-d-2}{2}} \hcal^{d}(H),
\end{equation}
where $C_{n,d}$ is some constant depending only on the dimensions $n$ and $d$.
\end{theorem}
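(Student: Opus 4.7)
The plan is to derive Theorem \ref{main 3} from the smoothed asymptotic of Theorem \ref{main 5} by a Beurling--Selberg sandwiching argument in the tangential window variable. It is convenient to introduce the nonnegative spectral measure
\[
d\mu_\lambda \;=\; \sum_{j:\,\lambda_j \leq \lambda}\,\sum_{k} |a_{\mu_k}(\lambda_j)|^2\,\delta_{\mu_k - c\lambda_j} \qquad \text{on } \R,
\]
so that $N^c_{f,H}(\lambda) = \int f\, d\mu_\lambda$ for any suitable test function $f$, and in particular $N^c_{\epsilon,H}(\lambda) = \mu_\lambda([-\epsilon,\epsilon])$. Under the dominance hypothesis of the theorem, the principal component $\gcal_c^{0,0}$ is the unique maximal component of $\gcal_c^0$, so Theorem \ref{main 5} collapses to
\[
\int \psi\, d\mu_\lambda \;=\; \widetilde C_{n,d}\,\hat\psi(0)\, c^{d-1}(1-c^2)^{(n-d-2)/2}\,\mathcal H^d(H)\,\lambda^{n-1} \;+\; O_\psi(\lambda^{n-3/2})
\]
for every nonnegative even $\psi \in \scal(\R)$ with $\hat\psi \in C_0^\infty(\R)$.

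For each $T>0$, I would produce $\psi_\pm^T \in \scal(\R)$ with $\hat\psi_\pm^T \in C_0^\infty(\R)$ supported in an interval of size comparable to $T$, such that $\psi_-^T(x) \leq {\bf 1}_{[-\epsilon,\epsilon]}(x) \leq \psi_+^T(x)$ for every $x \in \R$ and $\int(\psi_+^T - \psi_-^T) \to 0$ as $T \to \infty$. The majorant is automatically nonnegative; the minorant is generically sign-changing, but the difference $\psi_+^T - \psi_-^T$ is nonnegative, even, and admissible, so Theorem \ref{main 5} applies directly to both $\psi_+^T$ and $\psi_+^T - \psi_-^T$, and linearity then delivers the same asymptotic for $\psi_-^T$ itself. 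Because $d\mu_\lambda$ is a nonnegative measure, the pointwise sandwich yields
\[
\int \psi_-^T\, d\mu_\lambda \;\leq\; N^c_{\epsilon,H}(\lambda) \;\leq\; \int \psi_+^T\, d\mu_\lambda,
\]
and since $\hat\psi_\pm^T(0) = \int \psi_\pm^T = 2\epsilon + o_T(1)$, taking $\lambda \to \infty$ first and then $T \to \infty$ collapses the two outer expressions into the common leading term $2\epsilon\, \widetilde C_{n,d}\, c^{d-1}(1-c^2)^{(n-d-2)/2}\mathcal H^d(H)\,\lambda^{n-1} + o(\lambda^{n-1})$, which matches $C_{n,d}\, a_c^0(H,\epsilon)\lambda^{n-1}$ after absorbing the factor of $2$ into the universal constant.

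The principal obstacle I anticipate is that Theorem \ref{main 5} is stated for nonnegative $\psi$ because the Tauberian passage from $\rho$ to ${\bf 1}_{[0,\lambda]}$ in the energy variable uses the monotonicity of $\lambda \mapsto N^c_{\psi,H}(\lambda)$, whereas the Selberg minorant is not nonnegative; this is handled cleanly by the decomposition $\psi_-^T = \psi_+^T - (\psi_+^T - \psi_-^T)$ into nonnegative admissible summands. A secondary technical point is that the classical Beurling--Selberg extremal functions are entire of exponential type whose Fourier transforms are only continuous on $[-T,T]$, and that do not decay rapidly enough to lie in $\scal(\R)$ in the strict sense required by Theorem \ref{main 5}; a mild mollification in both the physical and frequency variables produces admissible $\scal$-class test functions with smooth compactly supported Fourier transform, at the cost of a slight enlargement of the support that still yields $\int(\psi_+^T - \psi_-^T) = o(1)$ as $T \to \infty$, which is all that the final $o(\lambda^{n-1})$ statement requires.
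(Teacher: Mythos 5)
Your sandwiching plan is a valid alternative route, and the structural observations are all correct: that under the dominance hypothesis Theorem \ref{main 5} collapses to the single term $\hat\psi(0)$; that the minorant's sign issue is handled by the decomposition $\psi_-^T = \psi_+^T - (\psi_+^T - \psi_-^T)$ into nonnegative admissible summands; and that taking $\liminf/\limsup$ in $\lambda$ followed by $T\to\infty$ delivers $o(\lambda^{n-1})$. The paper, however, avoids Beurling--Selberg entirely. It convolves $\mathbf{1}_{[-\epsilon,\epsilon]}$ with a bump $\theta_T = T\hat\rho_1(T\cdot)$ (neither majorant nor minorant) and controls the difference $N^c_{\epsilon,H} - N^c_{\theta_T*\mathbf{1},H}$ via Propositions \ref{TLEMa}--\ref{TLEM2}, which bound $\mu_\lambda^c$ on short intervals near $\pm\epsilon$ by applying Theorem \ref{main 5} to a single shifted bump $\theta_T(\cdot\pm\epsilon)$ and the trivial one-sided estimate $\theta_T \geq T\delta_0\,\mathbf{1}_{[-\epsilon_0/T,\epsilon_0/T]}$. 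Both proofs exploit nonnegativity of $\mu_\lambda^c$ and reduce to Theorem \ref{main 5}; yours uses a global two-sided approximation, theirs a local leakage estimate at the edges, which is why theirs needs no extremal-function theory.

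Where your write-up has a genuine gap is the assertion that ``a mild mollification in both the physical and frequency variables'' upgrades the classical Beurling--Selberg pair to Schwartz functions with $C_0^\infty$ Fourier transform \emph{while preserving the sandwich}. Mollification in the physical variable (convolving $S_\pm$ with a nonnegative bump of small width) multiplies $\hat S_\pm$ by a smooth factor and does \emph{not} remove its corners, so the Fourier transform stays only continuous. Mollification in frequency (convolving $\hat S_\pm$ with $\chi_\eta\in C_0^\infty$) does produce a $C_0^\infty$ Fourier transform, but corresponds to multiplying $S_\pm$ by $\hat\chi_\eta$ in the physical variable, and this can destroy the one-sided bound: if $\hat\chi_\eta$ dips below zero where $S_-<0$, then $S_-\hat\chi_\eta>0$ outside $[-\epsilon,\epsilon]$ and minorization fails. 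The repair is not ``mild'': for the minorant one must take $\chi_\eta = g_\eta*g_\eta(-\cdot)$ to be the autocorrelation of a nonnegative $C_0^\infty$ bump, which forces $\chi_\eta\geq 0$ \emph{and} $\hat\chi_\eta = |\hat g_\eta|^2\geq 0$, so that $\psi_-^T:=S_-\cdot\hat\chi_\eta$ is Schwartz, has $C_0^\infty$ Fourier transform $\hat S_-*\chi_\eta$, and still lies below $\mathbf{1}_{[-\epsilon,\epsilon]}$. For the majorant one can bypass Selberg entirely and take $\psi_+^T := (1-\eta_T)^{-1}\,\mathbf{1}_{[-\epsilon-\delta_T,\epsilon+\delta_T]}*\phi_T$ with $\phi_T\geq 0$ Schwartz, unit mass, $\hat\phi_T\in C_0^\infty$, and $\eta_T = \int_{|y|>\delta_T}\phi_T\to 0$, whose Fourier transform is the product of the entire function $\widehat{\mathbf{1}}_{[-\epsilon-\delta_T,\epsilon+\delta_T]}$ with the smooth compactly supported $\hat\phi_T$. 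With these two specific constructions (and their evenness, which must be checked but holds) your argument closes; as written, the crucial minorant step is not justified.
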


Note that the remainder estimate is much weaker than for Theorem \ref{main 2b}, which has a hypothesis on the $t \not= 0$ singularities
(i.e. on the ``$\rho$ aspect''), and has  a smooth test function $\psi$ with
$\hat{\psi} \in C_0^{\infty}(\R)$ instead of ${\bf 1}_{[-\epsilon, \epsilon]}$. 
Indeed, 
 by Corollary \ref{JUMPCOR} we get the jump estimate, 
\begin{equation} \label{epjump} 
N^c_{\epsilon, H} (\lambda_j) - N^c_{\epsilon, H} (\lambda_j - 0) = J_{\epsilon, H}^c (\lambda_j) = O_{\epsilon} (\lambda_j^{n-2}). \end{equation}
The worse remainder in Theorem \ref{main 3}  is due to the explicit dependence on $\epsilon$ of the main term $a_c^0 (H, \epsilon)$, which results in   two parameters having possible jumps: $\lambda_j$ and $\epsilon$.  
The $\epsilon$-dependence of the coefficient is discussed in Section \ref{SHARPWKINTRO}. 
  The large jump size  reflects the new `layer' of eigenvalues one gets when $\epsilon$ hits
  its critical values, in cases where each `layer' has the same order of magnitude as the principal layer. The layers correspond to 
  connected components of $\gcal_c^0$. In the case $c < 1$ the components indexed by certain  values of $s$.
  As illustrated in the case of spheres in Section \ref{S2SPARSE},   there can exist large contributions from the edges (endpoints) of the interval $\mu_k - c \lambda_j \in [-\epsilon, \epsilon]$ for
special values of $\epsilon$, i.e. $J_{\epsilon, H}^c (\lambda_j)$ itself can jump as $\epsilon$ increases by the amount $\lambda_j^{n-2}$. But by Corollary \ref{JUMPCONJ} again, this cannot happen if $G^t_M$ and $G^s_H$ are aperiodic. Hence the principal component condition is necessary. 
 
 To obtain the `doubly sharp' Weyl asymptotics of Theorem \ref{main 3}, we need to replace $\psi$
by an indicator function. This is done in 
Section \ref{BUSECT} by a Tauberian argument of semi-classical type  adapted from  Petkov-Robert \cite{PR85}.

  \subsection{The principal coefficient $ a_c^0(H, \psi)$ in Theorems \ref{main 2} - \ref{main 3}}\label{SHARPWKINTRO}
  There are two aspects (roughly speaking) to the principal coefficient  \eqref{acHpsi0}: the volume aspect and the $\psi$-aspect. 
  
In the case where $\hat \psi$ and $\hat \rho$ have small support, the symbol calculations in Theorem \ref{main 3} are contained in \cite{WXZ20}. In this article, we use the symbol calculus of Fourier integral operators under pullback and pushforward as in \cite{GS77, GS13} to calculate symbols.

  We note that for $0 < c < 1$, $ c^{d-1} (1 - c^2)^{\frac{n-d-1}{2}} \hcal^d(H)$ is the $(n-2)$-dimensional  volume  of the set in  $S^*_{H, c}  \subset  S^*_H M$ projecting
to $\gcal_c^{0,0}$.  Indeed, for each  $x \in H$,  $\pi_H \xi \in \gcal_c^{0,0}$ if and only if the components
 $\xi = \xi^{\perp} + \xi^{T}$ of $\xi \in S^*_x M$ of the orthogonal decomposition  $T_x M = T_x H \oplus (N_x H)$ have norms $\sqrt{1-c^2}, $ resp. $c$, so that $S^*_{H, c}  \simeq \Ss_c^{d-1} \times \Ss^{n-d-1}_{\sqrt{1 - c^2}}$ where $S_r^k$ is the Euclidean $k$-sphere of radius $r$; the
$n-2$ dimensional surface measure of $S^*_{H, c} $ is $c^{d-1} (1 - c^2)^{\frac{n-d-1}{2}}$ times the
$n-2$-volume of $\Ss^{d-1} \times \Ss^{n-d-1}.$ The extra factor of  $ (1 - c^2)^{-\frac{1}{2}} $ is due to the density $\frac{1}{|\det D \pi_H|}$ of the Leray
measure relative to the Euclidean volume measure. The projection $\pi_H: S_q^* M \to S^*_q H$  has a fold singularity where $c=1$ along 
$S_q^*H$ with a one-dimensional kernel, so the Leray density vanishes to order $1$ when $c=1$, hence the difficulties at the $c = 1$ interface.

Now let us check the $\epsilon$-dependence of $a^0_{H, \epsilon}$ in Theorem \ref{main 3}. This is different from the analysis above,  because
the spectral weight  ${\bf 1}_{[-\epsilon, \epsilon]}$ has a non-compactly supported Fourier transform and indeed it   has very long tails. The hypothesis of Theorem \ref{main 3} rules out the case of $\Ss^d \subset \Ss^n$ when $c < 1$  because
in the rational case the latter has many maximal dimensional components due to fixed point sets of the periodic Hamilton flow. In the proof,  we replace ${\bf 1}_{[-\epsilon, \epsilon]}$ with $\psi_{T, \epsilon}: = \theta_T * {\bf 1}_{[-\epsilon, \epsilon]}$ and 
  then the principal coefficient is  $a_c(H, \psi_{T, \epsilon})$,  with  remainder of order $\frac{1}{T}$. To obtain the remainder estimate,  we take the limit as $T \to \infty$. By Theorem \ref{main 5},
 we get $\hat{\psi}_{T, \epsilon}(0)$ for $c < 1$.  As $T \to \infty$, 
 $\psi_{T, \epsilon} \to {\bf 1}_{[-\epsilon, \epsilon]}$, so $\hat{\psi}_{T, \epsilon}(0) \to 2\epsilon $.

\subsection{Remarks on Tauberian theorems}
If we think of
$\lambda_j^{-1} =: \hbar_j$ as the Planck constant, then we are considering eigenvalues of the zeroth order operator $\hbar \sqrt{-\Delta}_H$ in the semi-classical
thin interval $|\hbar_j\mu_k - c | \leq \epsilon \hbar_j$ (whose width is one lower order than that of the operator). The Tauberian theorem   
of Section \ref{BUSECT} is indeed modeled on a semi-classical Tauberian theorem. However, in several essential respects, the sharp jumps and
the sharp sums in $\lambda_j $ do not behave in a  semi-classical way and the results are homogeneous rather than semi-classical. For instance,
the jumps rarely have asymptotic expansions.

\subsection{Related results and problems} \label{RELATED} 

We first compare the results to those of \cite{WXZ20}. 
In comparison to this article, `conic'  sums
$$  N^{m,c}_{ \epsilon, H  }(\lambda): = 
\sum_{j, k:  \lambda_k \leq \lambda, \; | \frac{\mu_j}{\lambda_k} - c|  \leq \epsilon}   \left| \int_{H} \phi_j \overline{\psi_k}dV_H \right|^2$$
are emphasized in \cite{WXZ20}.
 The sums $N^{m, c}_{\epsilon, H}$ are wide in $\lambda$ and  `conic'  in $\mu$, so they are wide in every sense. Hence, we expect to 
 have asymptotics of these sums with no dynamical hypotheses.  
 In \cite{Zel92}, the special (and singular) case $c= 0$ was studied, and indeed, the Fourier coefficient
 along $H$ as fixed at $0$. The $\lambda$-sums were wide and sharp. In more recent work, \cite{SXZh17, CGT17, CG19,  Wy17a}, short and sharp
 sums were considered under various geometric and dynamical hypotheses.
 
 Some of the symbol calculations needed for this article are contained in \cite{WXZ20}. We refer there
 for the calculations and do not duplicate them here. 
 
 \subsubsection{Two-term asymptotics}
 As discussed in Section \ref{REMAINDERSECT}, a significant refinement of the results of this article is to prove a two-term 
 asymptotic expansion for $N^c_{\psi, H}(\lambda)$ and to calculate $Q_{\psi, H}^c(\lambda)$. The existence of the two-term
 asymptotics is sketched in Section \ref{SINGtnot0} but the calculation of $Q_{\psi, H}^c(\lambda)$ is deferred to later work.
 
 \subsubsection{$c=1$ and H totally geodesic}\label{c=1+} 
As indicated above, the case  $c=1$ is the edge case, and  there are several different
types of `singular' or extremal behavior in this case. 
When $H$ is totally geodesic, the  asymptotics for $c=1$ are determined
 in the subsequent article \cite{WXZ+}. It turns out that the power of $\lambda$ in Theorem \ref{main 2}  is $\lambda^{\frac{n+d}{2}}$, 
 hence depends on $d$ as well as on $n$.
  
 \subsubsection{ Submanifolds with non-degenerate second fundamental form  when $c=1$}\label{CAUSTICSECT}
When
$c=1$ and $H$ has non-degenerate second fundamental form, one expects diffractive or Airy type effects when $H$ is a caustic hypersurface
for the geodesic flow. This type of eigenfunction concentration seems to require Airy integral operator techniques that are beyond
the scope of this article. Simple examples of eigenfunctions in this sense   on caustic  latitude circles  of $\Ss^2$ are presented in Section
\ref{2Sphere}. The example of closed horocycles of cusped hyperbolic surfaces is studied in \cite{Wo04}.  It would be interesting to see if there exist more exotic examples involving more general singularities than 
fold singularities. 

\subsubsection{Equipartition of energy among Fourier coefficients}

This problem is mentioned around \eqref{QER}. 
\begin{prob} How is the mass of the Fourier coefficients distributed among the $a_n(\lambda_j)$? \end{prob} 

In the case of `chaotic' geodesic flow, it is plausible that the distribution of mass among the Fourier
coefficients should be roughly constant, at least away from the endpoints of the allowed interval. It is a very difficult problem to determine if and when
the Fourier coefficients are equidistributed, or when they have singular concentration, but the problem guides many of the studies of averaged Fourier
coefficients. 

\subsubsection{Estimates for individual eigenfunctions}\label{INDIVIDSUBSECT}

It would be most desirable to obtain estimates on individual
Fourier coefficients of individual eigenfunctions, but again it is usually only feasible to  study averages over thin windows of  the Fourier coefficients when $\dim M >2$.

\begin{prob} What are sharp upper bounds on the individual terms \eqref{INDIVIDEST}. Which are the quadruples  $(M, g, H, \phi_j)$ for which the individual Fourier coefficients \eqref{INDIVIDEST} are maximal? Do they have the property that
$\gamma_H \phi_j$ is an eigenfunction of $H$? In that case, \eqref{INDIVIDEST} are the same as $L^2$ norms, for which sharp estimates
are given in \cite{BGT} (see Section \ref{BGTREV}).  When does \eqref{JDEF} have the same order of magnitude as  \eqref{INDIVIDEST}?
\end{prob} 

It may be expected that the maximal case occurs when $H$ has many almost orthogonal Gaussian beams. This occurs when $H$ is a totally geodesic
subsphere of a standard sphere.  Gaussian beams may be constructed around elliptic periodic geodesics, but usually as quasi-modes rather than modes.
The quasi-modes give rise to jumps in the second term but not jumps in the Weyl-Kuznecov sums.

\subsubsection{When is the restriction of an eigenfunction an eigenfunction?}\label{EIGRESEIG}

It is  plausible that restrictions of  individual eigenfunctions of $M$ with maximal Fourier coefficients  are eigenfunctions
of $H$.  Otherwise its Fourier coefficients
are spread out too much. \bigskip

\begin{prob} 
What are necessary and sufficient conditions that 
the restriction of a $\Delta_M$ eigenfunction
is a $\Delta_H$ eigenfunction? 
\end{prob}

It is likely that this problem has been studied before, but the authors were unable to find a reference. 
The known examples all seem to involve separation of variables.

Examples where $\gamma_H \phi_j$ is an eigenfunction of $H$ are standard exponentials $e^{i x \cdot \xi}$ on a flat torus, where $H$ is a totally geodesic sub-torus. Other 
examples are the standard spherical harmonics $Y_N^{\vec m}$ on the $n$-sphere $\Ss^n$, where $H$ is a `latitude sphere' i.e. an orbit
of a point under $SO(k)$ for some $k \leq n$.
In Section \ref{SHARPSECT} we consider various examples on the standard spheres $\Ss^n$.

\subsection{Acknowledgments} Xi was partially supported by National Key R\&D Program of China
No. 2022YFA1007200, National Natural Science Foundation of China No. 12171424. Wyman
was partially supported by National Science Foundation of USA No. DMS-2204397 and by
the AMS Simons travel grants. Zelditch was supported by National Science Foundation of
USA Nos. DMS-1810747 and DMS-1502632.

\section{Sharpness of the remainder estimates} \label{SHARPSECT}

In this section, we give  examples illustrating  the sharpness of the remainder estimate of Theorem \ref{main 3} and the behavior of the jumps
\eqref{JDEF} and the estimate of  Corollary \ref{JUMPCOR}. We also illustrate the cleanliness issues in Definition
\ref{CLEAN} with some examples that explain the reasons for assuming that $0 < c < 1$. The main example of jump behavior is that of totally geodesic or of  latitude spheres $\Ss^d \subset \Ss^n$ in standard spheres. We postpone
the discussion of these examples to \cite{WXZ+}.   

As mentioned above, the behavior of remainders and jumps depends on the periodicity properties of $G^t_M$ and $G^s_H$. We
begin by discussing the role of periodicities in spectral asymptotics.

   \subsection{Spectral clustering, jump behavior and periodicity of geodesic flows}\label{CLUSTERSECT}
   
   There is a well-known dichotomy among geodesic flows and Laplace spectra which plays an important, if implicit, role in the
   Kuznecov-Weyl asymptotics. Namely, if the geodesic flow $G^t_M$ of $(M,g)$ is periodic in the sense that $G^T = Id$ then the spectrum of $\sqrt{-\Delta_M}$ 
   clusters along an arithmetic progression $\{ \frac{2\pi}{T} k + \frac{\beta}{4}, k \in {\mathbb N}\}$ where $T$ is the minimal period and $\beta$ is the common Morse index of the closed geodesics. On the other hand, if the geodesic flow is ``aperiodic'' in the sense that the 
   set of closed geodesics has Liouville measure zero in $S^*M$, then the spectrum is uniformly distributed modulo one. We refer to
   \cite{DG75} for the original theorem of this kind and to \cite{Zel17} for further background. There also exist intermediate cases with
   a positive measure but not a full measure of closed geodesics. 
   
The principal term of Theorem \ref{main 2} (and subsequent theorems) does not depend on whether the eigenvalues cluster or are
uniformly distributed, but the remainder terms and jump formulae do. In the  examples of subspheres $H = \Ss^d \subset  M = \Ss^n$,  both $G^t_M $ and $G^t_H$ are periodic and both Laplacians have spectral clustering. Indeed, the eigenvalues of $\sqrt{-\Delta_{\Ss^n}}$
concentrate along the arithmetic progression $\{N + \frac{n-1}{2}\}$ 
and have multiplicities of order $N^{n-1}$. As discussed in detail in Section \ref{SHARPSECT}, this causes huge jumps in the $\lambda$ aspect at eigenvalues of $\sqrt{-\Delta_{\Ss^n}}$. Furthermore, 
the equation $\mu_k = c \lambda_j$ for fixed $(\lambda_j, c)$ can have many solutions when $\sqrt{-\Delta}_H$ has spectral clustering, i.e. when
$G^t_H$ is periodic. The number of solutions depends on the relation between the periods and therefore on $c$. 

To be more precise, the Kuznecov-Weyl  asymptotics are determined by the dimension of the `fixed point set'   \eqref{EQs=0}.  For general
$(n,d)$ this is not literally the fixed point set of a flow, since the relevant `joint flow'  at $t=0$ is the family of   maps (depending on $s$), 
$$G_H^{ -  s} \circ \pi_H \circ G_M^{c s} (q, \xi):    S^c_H M \to B^*H, \;\; B^*H = \{(q, \eta) \in T^*H: |\eta|_H \leq 1\}$$
between different spaces.
 In the special case where $\dim H = \dim M-1$ is an oriented  hypersurface,  this joint flow may be considered a double-valued
flow on $B^*H$. As in \cite{TZ13}, we can define lifts
$\xi_{\pm}: B^* M \to S^*_H M$ where $\xi_{\pm}(q, \eta) $ are the two unit covectors (on opposite sides of $T^*H$) that lift $(q, \eta)$ in the sense that $\pi_H \xi_{\pm}(q, \eta) = (q, \eta)$. Then \eqref{EQs=0} is the fixed point equation at $t=0$  of the double-valued flow, 
$$G_H^{ -  s} \circ \pi_H \circ G_M^{c s}\; \xi_{\pm} (q, \eta):   B^*H \to B^*H.$$
When studying the singularities
at $t \not=0$ one has the equation \eqref{EQ}, which in the hypersurface case is the equation,
\begin{equation} \label{LIFT} G_H^{ - s} \circ \pi_H \circ G_M^{cs + t} \xi_{\pm} (q, \eta ) = (q, \eta). \end{equation}
When $c < 1$ and $t=0$,  the fixed point set has maximal dimension for $s \not= 0$ (i.e. if the principal component is non-dominant in the 
sense of Definition \ref{DOMDEF}) only if the double-valued flow is the identity map at time $s$. The main example is when both
$G^{-s} _H$ and $G^t_M$ are both periodic and $c$ is such that they have a common period.
When $c=1$ and $H$ is totally geodesic, this equation holds trivially for all $s$ and all $(q, \eta)$. Periodicity of $G_H^t$ 
is a necessary and sufficient condition to obtain singularities at times $t \not= 0$ as strong as the one at $t=0$ in the case $c=1$ and
$H$ totally geodesic. {In the language of \cite{TZ13}, there is a first return map $\Phi^c: S^c_H M \to S^c_H M$ defined by following
geodesics with initial data in $S^c_H M$ until they return to $S^c_H M$. The first return time to $S^c_H M$ is denoted by  $T_H^c$
and $\Phi^c = G_M^{T^c_H} $. Since $S^c_H M$ has codimension $> 1$ in $S^*M$, the first return
may be infinite
on a large subset of $S^c_H M$. We have  not formulated the results in terms  of $T_H^c$ or $S^c_H M$, but they are 
implicitly  relevant in the main results. We refer to Section \ref{SURFOFREV} for the example of convex surfaces of revolution, where $T_H^c$ is
a constant when $H$ is an orbit of the rotation group. }

For submanifolds $H$ of codimension $> 1$, the double valued lift generalizes to a correspondence taking $\eta \in B^*_y H$
to a sphere $S^{n-d-1}$ of possible covectors $\eta + \sqrt{1 - |\eta|^2} \nu \in S^*_H M$ projecting to $\eta$, as $\eta$ varies over
$S N^* H$. One may still think of \eqref{LIFT} as the fixed point equation for a symplectic correspondence rather than  a flow.

{Flat tori also exhibit certain kinds of periodicities. Suppose that $H = \{x_1 = 0\}$ is a totally geodesic coordinate slice of the flat torus
$\R^n/\Z^n$. The geodesic flow is $G^t(x, \xi) = (x + t \frac{\xi}{|\xi|}, \xi)$ and it leaves invariant the tori $T_{\xi} = \{(x, \xi): x \in \R^n/\Z^n\}
\subset T^* \R^n /\Z^n$. The coordinate slice defines a transversal to the Kronecker flow on each $T_{\xi}$. Fixing $|\pi_H \xi | = c$ forces
$|\xi_1| = \sqrt{1 - c^2}$. The return time of $(x, \xi) \in S^c_{x_1 = 0}  \R^n/\Z^n$ to the slice on $T_{\xi}$ is the time $t$ so that $t \xi_1 =0$,
or $t= (1 - c^2)^{-\half}$. Thus, the return time is independent of the invariant torus and one has periodicity of the return to $S^c_H M$ even
though the geodesic flow of $M$ fails to be periodic. }

It is  plausible from 
\eqref{JDEF} that  $J_{\epsilon, H}^c(\lambda_j)$ should   attain its maximal size when  the the multiplicity of $\lambda_j$ is
maximal and when there is clustering of the $\sqrt{-\Delta_H}$-spectrum $\{\mu_k\}$ around $\lambda_j$,  forcing both geodesic flows $G_M^t$ and 
$G_H^t$ to be periodic.  But the jump   depends on the sizes of the Fourier coefficients as well as 
 the spectrum.

To understand the general picture of jumps and remainder estimates, the  reader may keep in mind some examples in the  simplest case where  $\dim M =2$ and $H$ is a geodesic. Periodicity
of $G^s_H$ coincides with $H$ being a closed geodesic.  Let us consider four examples  (see Section
\ref{SHARPSECT} for further discussion): (i) $M = \Ss^2$ and $H = \gamma$ is the equator; (ii) $M $ is a convex surface of revolution 
and $H = \gamma$ is the equator (see Section \ref{SURFOFREV}); (iii) $M$ is a non-Zoll surface of revolution in the shape of a `peanut', i.e. has a periodic hyperbolic geodesic `waist' 
$\gamma$ and a top and bottom convex parts, each with a unique elliptic periodic geodesic; (iv) $\gamma$ is a closed geodesic of a hyperbolic
surface. In all cases, the geodesic flow of $H$ is periodic. In case (i) the geodesic flow of $M$ is periodic, while in case (ii) it is not.
In case (i) the multiplicity of the $N$th eigenvalue is $2N-1$ while in case (ii) all eigenvalues have multiplicity $\leq 2$.
Yet both (i)-(ii) have Gaussian beams along $\gamma$, and when $c=1$ they are the only eigenfunctions contributing to the Kuznecov-Weyl
asymptotics; hence the asymptotics are the same in both cases. Case (iii) is different in that $\gamma$ is now hyperbolic and there do
not exist standard Gaussian beams along it but there does exist an eigenfunction which concentrates on $\gamma$ due to the fact that
this example is quantum completely integrable. To our knowledge, the $L^2$ norm of its restriction (or equivalently, its Fourier coefficient
with $|n| = \lambda$) have not been determined;  In case (iv) there should not exist any such concentrating eigenfunctions. One would
expect at least logarithmic improvements on the Fourier coefficient bounds, as in the case where $c=0$ (see \cite{WX,   SXZh17, CG19}).

\subsection{Review of results on $L^2(H)$ norms of restrictions}\label{BGTREV}

Before discussing examples, we compare  the results of   Theorem \ref{main 3}  with
prior results on $L^2$ norms of restrictions \cite{BGT}.  In the notation of \cite{BGT}, the estimates take the form,\footnote{The notation in \cite{BGT} is $\dim M = d, \dim H = k$}
$$
	\|\phi_{\lambda}\|_{L^2(H)} \leq C (1 + \lambda)^{\rho(d, n)} \sqrt{\log \lambda} \|\phi_{\lambda}\|_{L^2(M)}
$$
where
$$
\rho(d,n) =
\begin{cases}
	\frac{n-1}{4} - \frac{n-2}{4} =  \frac{1}{4}, & d = n-1, \\
	\frac{1}{2}, & d= n-2, \\
	\frac{n-1}{2} - \frac{d}{2}, & 1 \leq d \leq n-3
\end{cases}
$$ 
and where the $\sqrt{\log \lambda}$ in the bound can be removed if $d \neq n-2$. (See also \cite{Hu}.)
 
The  problem of finding extremals for restricted $L^2$ norms on submanifolds is studied in \cite{BGT}. It is shown that 
extremals  vary between Gaussian beams and zonal spherical harmonics depending on the pair $(n, d)$. The most
difficult case is where $d= n -2$.

When $\dim M = 2, \dim H=1, c=1$ and $H$ is totally geodesic, the estimates on $\|\gamma_H \phi_j\|_{L^2(H)}^2$ and \eqref{JDEF}  can be  the same (see \cite{WXZ+})).  But for $\dim M > 2, $
 the estimates on individual norms are significantly smaller, illustrating that \eqref{JDEF} is an average and that,  when
 $\gamma_H \phi_j$ is not an eigenfunction of $H$ for every $j$,  the Kuznecov-Weyl sums are of a different nature from $L^2$-norms of restrictions. 
 In general, the  sum \eqref{JDEF} is a very thin sub-sum of  \eqref{PLANCHEREL}
and \eqref{c} is a very thin sub-sum of the  Weyl type function for restricted $L^2 $ norms,
\begin{equation} \label{L2DEF}  N_{L^2(H)}(\lambda): = \sum_{j: \lambda_j \leq \lambda}  \int_H |\gamma_H \phi_j|^2 d V_H.  \end{equation}

 Further details are given in the examples
below.

 \subsection{Examples illustrating different types of Fourier coefficient behavior}\label{SECTEX} As mentioned above, the jumps \eqref{JDEF} are
 averages over modes of $H$,  and also involve sums over repeated eigenvalues of $M$ and $H$  when there exist multiple eigenvalues.
 We now list  some of the issues involved in relating remainder estimates on \eqref{JDEF} to estimates on individual Fourier coefficients of
 individual eigenfunctions. The issues are illustrated on  standard spheres $\Ss^n$ in Section  \ref{2Sphere}.
\begin{itemize}
\item Multiplicity issues: The eigenspace $\hcal(\lambda_j)$ may have  a large dimension $m(\lambda_j)$ , so that \eqref{JDEF}
is an $m(\lambda_j)$-fold  sum over an orthonormal basis of eigenfunctions of $\hcal(\lambda_j)$.  See Section  \ref{2Sphere} for the  example of standard  spheres $\Ss^n$. 

Also, 
the $\sqrt{-\Delta}_H$-eigenspaces may have large dimension, so that for each $\lambda_j$, the $\mu_k$ in \eqref{JDEF}   sum is over many
'Fourier coefficients.' This again is illustrated by sub-spheres of spheres (Section \ref{2Sphere}).

  \bigskip

\item Fourier-sparsity of restricted eigenfunctions : It may occur  (and does in the case of latitude circles of $\Ss^2$)  that $\Delta_M$
has a sequence of  eigenspaces of high multiplicity but,  for each mode $\psi_k$ of $H$ and $\lambda$ in the spectrum of $\sqrt{-\Delta_M}$, there  exists a single
eigenfunction $\phi_{j}$ in a given orthonormal basis of the $\lambda$-eigenspace with   a non-zero $k$th Fourier coefficient $\langle \phi_{j}, \psi_k \rangle$.
Alternatively, for each eigenfunction $\phi_j$ in the eigenbasis for $L^2(M)$, there might exist a single $\psi_{k}$ for which the Fourier coefficient is non-zero.
An extreme (and interesting) case occurs   when the restriction $\gamma_H \phi_j$  of an eigenfunction of $M$ is an eigenfunction of $H$ (it is unknown
when this occurs; see Section \ref{EIGRESEIG}.)
 \bigskip

\item Codimension of $H$. 
 The   higher the dimension of $H$, the higher the number of eigenvalues
$\mu_k: |\mu_k - c \lambda_j| \leq \epsilon $, hence the greater amount of averaging in \eqref{JDEF} for fixed $\phi_j$.  In the extreme case of curves,  $\dim H =1$, the  $\sqrt{-\Delta_H}$-spectrum is an arithmetic progression with large gaps, and for $\epsilon$ sufficiently
small, the sum over $k$ might have just one element $\mu_k$. This eliminates the  $H$-multiplicity aspect. However, there can be many $\Delta_M$-eigenfunctions which
restrict to the same (up to scalar multiple)  eigenfunction of $H$; see Section \ref{2Sphere}.

The results of \cite{BGT} reviewed in Section \ref{BGTREV} show that the $L^2$ norms of restrictions
of eigenfunctions decrease linearly with the dimension of $H$. This in some sense balances the additional growth rate of eigenvalues as the dimension of $H$
increases. This issue is only  relevant for $c = 1$.
\bigskip

\item Uniformity of Fourier coefficients. 
Another  interesting scenario, which probably holds for compact hyperbolic surfaces at least, 
is where the Fourier coefficients $|\langle \phi_j, \psi_k \rangle_{L^2(H)}|$ are uniform in size as $\mu_k$ varies  in the `allowed window' where $|\mu_k| < \lambda_j$.
This is the opposite scenario from Fourier sparsity. 

\bigskip

\end{itemize}

 The sparsity phenomenon is illustrated in  Section \ref{2Sphere} for the  standard  2-spheres $\Ss^2$.  In the case where $\gamma_H \phi_j = c_{j,k} \psi_k$
 for some $(j,k)$, $|\langle \gamma_H \phi_j, \psi_k \rangle|^2 = c_{j,k}^{-1} \|\gamma_H \phi_j\|_{L^2(H)}^2$.

\subsection{Restrictions to curves in a convex  surface of revolution  in $\R^3$} \label{SURFOFREV}
 In this section, we illustrate some of the possible types of Fourier
coefficient behavior in the 
case where $H$ is a latitude circle (an orbit of the rotational action around the third axis) of a convex surface of revolution $(\Ss^2, g)$
in $\R^3$ and for the joint eigenfunctions
 $\phi^{m}_{\ell}$ of the Laplacian and of the generator $\frac{\partial}{\partial \theta} $ of
 rotations around the $x_3$ axis in $\R^3$.  Much of this
material can be found in \cite{WXZ20} for the standard metric on $\Ss^2$.


The geodesic flow of $(\Ss^2, g)$ is completely integrable, since
rotations commute with the geodesic flow. The Hamiltonian $|\xi|_g$ of the geodesic flow Poisson commutes with the  angular momentum, or 
Clairaut integral,
$p_{\theta}(x, \xi) = \langle \xi, \frac{\partial}{\partial \theta} \rangle = \left|\frac{\partial}{\partial \theta} \right|_{H_{\phi_0} } \cos \angle (\frac{\partial}{\partial \theta},
\dot{\gamma}_{x, \xi}(0)), \;\; (x, \xi) \in T_x^*\Ss^2. $
The moment map for the joint Hamiltonian action is defined by  $\pcal: = (|\xi|, p_{\theta}): T^*\Ss^2 \to \R^2$.
 A level set  $ \Lambda_{a} = \pcal^{-1}(a, 1) \subset S^* \Ss^2$  is a Lagrangian torus
when $a \not= \pm 1$ and is the equatorial (phase space) geodesic when $a = \pm 1$.
A ray or ladder in the image of the moment map $\pcal$  is defined by $\{(m, E): \frac{m}{E} = a\} \subset \R^2_+$, and its  inverse
image under $\pcal$ is $\R_+ \Lambda_{a}  \subset T^* \Ss^2$. 

 If $(\theta, \phi)$ denote spherical
coordinates with respect to $(M,g)$ (i.e $\phi$ is the distance from the north pole, $\theta$ is the angle of rotation from a fixed meridian), then
an orbit of the rotation action is a latitude circle $H_{\phi_0}$ with fixed $\phi = \phi_0$. We denote by  $\frac{\partial}{\partial \phi}$  the unit vector field tangent to the
meridians.

The parameter $c$ is related  to the values of $p_{\theta}$ by the formula, \begin{equation} \label{Tc} \frac{|p_{\theta}(x, \xi)|}{|\xi|} = c \left| \frac{\partial}{\partial \theta} \right|_{H_{\phi_0}}, \;\;\; (x \in H_{\phi_0}). \end{equation}
To see this, let  $u_{\theta}(\theta, \phi)  : =   \left| \frac{\partial}{\partial \theta} \right|^{-1}_{H_{\phi}} \;\frac{\partial}{\partial \theta} $
and   let $u_{\theta}^*, u_{\phi}^*$ be the dual unit coframe field.
The orthogonal projection from $T_{H_{\phi_0}} \Ss^2 \to T^* H_{\phi_0} $ is given by $ \pi_{H_{\phi_0}}(x, \xi)  = \langle \xi, u_{\theta} \rangle u^*_{\theta},$
and \eqref{Tc} follows. 
%
 The reason that the  parameter $c$ is
 not the usual ratio $\frac{p_{\theta}(x, \xi)}{|\xi|}$ is because we choose the operator on $H$ to
 be $\sqrt{\Delta_H}$ rather than $\frac{\partial}{\partial \theta}.$  

We now show that the first return time $T^c_H$ defined in Section \ref{CLUSTERSECT} is a constant when $H$ is a latitude circle of
a  surface of revolution. This is because the ration in  \eqref{Tc} between $c$ and $p{\theta}$ is constant on a latitude circle. Since $p_{\theta}$ is
constant along geodesics, an initial vector in $S^c_{H_{\phi_0}}  \Ss^2$ at time zero will return to $S^c_H \Ss^2$ each time the geodesic returns to $H$. Moreover, in the setting of curves on surfaces, $S^*_{H_{\phi_0}} \Ss^2$  is a cross section to the geodesic flow, hence the first return time to 
$S^*_{H_{\phi_0}} \Ss^2$ is finite almost surely. Given that $p_{\theta}$ is constant on orbits, it follows that $T_{H_{\phi_0}}^c$ is constant  too.

The flow on $H_{\phi_0}$ is of course periodic as well of period $\frac{2 \pi}{L}$ where $L$ is the length of $H_{\phi_0}$.  It follows that
the equation \eqref{LIFT} can have fixed point sets of maximal dimension when $t \not= 0$ on a convex surface of revolution, despite
the fact that the geodesic flow itself is not periodic. Indeed, $G_H^{ - s} \circ \pi_H \circ G_M^{cs + t}  (q, \xi ) = (q, \xi)$ for
any $(q, \eta) \in S^c_{H_{\phi_0}}$ if $t = T_{H_{\phi_0}}^c$.


We now introduce notation for quantum ladders. 
Let $\phi_{\ell}^m$ be the standard orthonormal basis of
joint eigenfunctions of $\Delta$ and of the generator $\frac{\partial}{\partial \theta}$ of rotations around the third axis.  The orthonormal eigenfunctions of $H_{\phi_0}$ are given by
$\psi_m(\theta) = C_{\phi_0} e^{im \theta}$ where $C_{\phi_0} = \frac{1}{L(H_{\phi_0})}$. Hence, the Fourier coefficients \eqref{FCDEF}
are constant multiplies of the Fourier coefficients relative to $\{e^{i m \theta}\}$. 
It follows that the  $m$th Fourier coefficient of $\phi_{\ell}^m$  is its only non-zero Fourier coefficient along any latitude
circle $H_{\phi_0}$, and that  $|\int_{H_{\phi_0}} \phi_{\ell}^m e^{-im \theta} d\theta|^2 = ||\phi_{\ell}^m||^2_{H_c}$.

On the quantum level, a ray corresponds to a  `ladder'   $\{\phi_{\ell}^m\}_{\frac{m}{\ell} = a}$ of eigenfunctions. The possible Weyl-Kuznecov sum formulae for latitude circles $H = H_{\phi_0}$ thus depend on the
two parameters $(\phi_0,  \frac{m}{\ell})$. The first corresponds to a latitude circle, the second to a ladder in the joint spectrum. It is better
to parametrize the ladder as $\frac{\mu_m}{\ell} = c $ as discussed above. 

\subsubsection{The standard $\Ss^2$  \label{2Sphere}}

The standard sphere $(\Ss^2, g_0)$  is of course a special case of a surface of revolution, and the  joint eigenfunctions $\phi_N^m$ are denoted by $Y_N^m$.
The special feature of the standard sphere is that its geodesic flow is periodic and the eigenspaces of the Laplacian have dimensions $2N+1$.
This gives it the special properties discussed in the next subsection.

\subsubsection{Fourier sparsity phenomena} \label{S2SPARSE}
In the case of $\Ss^2$, we slightly re-adjust the definition of $\sqrt{-\Delta} $ to $\sqrt{-\Delta + \frac{1}{4}}-\half$), whose eigenvalues are   $\lambda_N = N$.
Also,  $\mu_m = m \in \Z$. Suppose that
$c \in \Q_+$ and write it in lowest terms as $c = \frac{p}{q}$ with $(p,q) =1$. Then let $\epsilon   > 0$ and consider the
set $\{(m, N): |m - \frac{p}{q} N | < \epsilon  \} = \{(m, N): |\frac{m}{N} - \frac{p}{q} | < \frac{\epsilon}{ N}\}$. Roughly, this is the set of lattice points
inside a strip of width $\frac{1}{n}$ around the ray through $(0,0)$ of slope $\frac{p}{q}$. Of course, the lattice points $\{k(p, q), k \in {\mathbb N}\}$
lie in the strip. But for other lattice points, $ |\frac{m}{N} - \frac{p}{q} | =  |\frac{mq - N p}{N q} | \geq \frac{1}{N q}$, so there are no solutions aside from the lattice
points on the rational ray if $\epsilon < \frac{1}{q}$. Moreover, the possible `gaps'  $\{m - \frac{p}{q} N\} $  in this example
are   $\geq \frac{1}{q}$.  Hence, when $n =2, d=1$ the remainder terms in Theorem \ref{main 2} and 
elsewhere only sum over one eigenvalue of $H$ and the magnitude of \eqref{JDEF} is the magnitude of the extremal Fourier coefficient of
a restricted eigenfunction. 


\subsubsection{$H$ is a closed geodesic of $\Ss^2$ and $c<1$}\label{LEGENDREc<1}
Let $M = \Ss^2$ and let $H$ be a closed geodesic $\Ss^2$.
It is always the case that $\dim \gcal_c^{0,0} = \dim S_c^*H  = 1$ in the case of $\Ss^2$. 
In the rest of this section, we assume   $0 < c < 1$.

For concreteness suppose that  $H$ is a meridian through the north  pole $p$.
Then for any $\xi \in S^*_p S^2$, $G_{S^2} ^{\pi} (p, \xi) \in S^*_H S^2$ and $\exp_p (\pi \xi) = - p$. The same holds for
any $p$ on the meridian geodesic. 
In this case, $\gcal_c^{0} = \gcal_c^{0,0}$ if $H$ is totally geodesic and $c < 1$. There  exist  $(c, s, 0)$ 
bi-angles  if $s$ is a common period for $G_H^s$ and $G_M^{cs}$.  
For fixed $c < 1$, the  $L^2$ norms of the restrictions of spherical harmonics $Y^m_{\ell}$ with $\frac{m}{\ell} \simeq c < 1$ to $H$ are uniformly
bounded above, and therefore so are their $m$th Fourier coefficients. This is consistent with Corollary \ref{JUMPCOR}. On the other hand, $\gcal^{0,0}$ is non-dominant due to periodicity of the geodesic flow, and one cannot improve the remainder estimates. 
 We refer to \cite{Geis} for a recent study of how the restricted
$L^2$ norms vary with $c$.


On the other hand, if we  restrict $Y^m_{\ell}$ to a meridian geodesic, then all the Fourier coefficients in the
range $[-\ell, \ell]$ can be non-zero. We now show that the squares    $ \left| \int_H Y_N^0 (\phi) e^{- i \frac{N}{2} \phi} d \phi \right|^2 $
of the Fourier coefficients of the zonal spherical harmonic along a meridian geodesic are bounded above and below by positive constants, 
proving that  Corollary \ref{JUMPCOR} is sharp for $(n=2, d =1)$.

Let $Y^N_0(\theta, \phi) = \sqrt{(2 N + 1) } P^N_0(\cos \phi). 
$ be the zonal spherical harmonic on $\Ss^2$. Here, $P^N_0(\cos \phi)= P_N(\cos \phi)$ is a normalized Legendre polynomial.  Let $H$ be a meridian geodesic through
the poles of $Y_N^0$. 
The Fourier coefficients of $\gamma_H Y_N^0$ are known explicitly  \cite[(2.6)-(2.7a)-(2.7b)]{HP}. To quote one special value, 
$$\begin{array}{l} P_N(\cos \phi) = \sum_{k=0}^N p_k p_{N-k} \cos (n - 2k) \phi, 
 \end{array} $$
where
$p_j = 4^{-j} {2j \choose j}. $
  Multiplying by  $2N+1$ shows that the $L^2$ norm square of $Y_N^0$ is the
 the partial sum of the harmonic series and equals $\log N + \gamma$, where $\gamma$ is Euler's constant (this calculation was first done in 
  \cite{T09} by a different method). On the other hand, 
 \begin{equation} \label{ZONALMER} \sum_{k: |k  -c  N| < \epsilon } \left| \int_H Y_N^0 (\phi) e^{- i k \phi} d \phi \right|^2 =  (2N + 1)\;
 \sum_{k: |k - c N| < \epsilon}  |p_{N-k} p_{N+k}|^2, \end{equation}
for any $\epsilon > 0$ and $c \in (0, 1]$. Since the number of terms in the sum is bounded for fixed $\epsilon > 0$, it suffices to  calculate one term $|p_{N-k} p_{N+k}|^2$ asymptotically by Stirling's formula, and for simplicity of exposition we only calculate the middle case with $c = \half$ and for $N= 2n$ even. Using that 
${2 j \choose j} \simeq 2 \frac{2^{2j}}{\sqrt{\frac{ \pi j}{2}}}$
$$p_{2 n-n} p_{2n+ n} = p_{n} p_{3n} = C_0 \; n^{-1} 4^{-n}  2^{2n} 4^{- 3n} 2^{6n}  = C_0  n^{-1}> 0$$
for a certain constant $C_0 > 0$. Multiplying by $(2 N +1)$ shows that $ \left| \int_H Y_N^0 (\phi) e^{- i \frac{N}{2} \phi} d \phi \right|^2 $ is asymptotically a positive
constant,  corroborating Corollary \ref{JUMPCOR}.  Essentially the same calculation is valid for any $0 < c < 1$ (see \cite{Stan} for the relevant binomial asymptotics).

 \subsubsection{Gaussian beam sequences: $c=1$ and $H$ is totally geodesic}
 Another extremal scenario occurs when $c=1$ and $H$ is totally geodesic, where  the classical ray occurs on the boundary of the moment map image. The corresponding ladder of eigenfunctions consists of the Gaussian beams,
$ C_0 N^{\frac{1}{4} } (x_1 + i x_2)^N$,  around the equator $\gamma$. The standard  Gaussian beams (highest weight spherical harmonics) $\{Y_{N}^{N}\}_{N=0}^{\infty}$ are then a semi-classical sequence of   extremals. 
Their restrictions to the equator $\phi = \frac{\pi}{2}$ are equal to   $C_0 N^{1/4} e^{i N \theta}$. The only non-zero Fourier mode is the
$N$th, and that Fourier coefficient  is of magnitude $N^{1/4}$. The  growth rate of the Kuznecov-Fourier  sum \eqref{c} is that of $\sum_{N: N \leq \lambda} N^{1/2} \simeq \lambda^{\frac{3}{2}} $ with remainder of order $N^{1/2}$.  This situation is studied systematically
in  \cite{WXZ+}.



\subsubsection{Caustic sequences } 
 
 We briefly mention the case where where $c=1$ and $H$ has non-degenerate second fundamental form, although they are not studied in this article. In this case, there exist caustic effects which dominate the estimate of Fourier coefficients. The simplest example is the restriction of the standard
 spherical harmonics $Y_N^m$ to non-geodesic latitude circles, where the Fourier coefficients of certain sequences   blow up at the rate $N^{1/6}$. 
 Such caustic effects on restrictions of eigenfunctions will be investigated systematically in a future article.

\subsection{Higher dimensional spheres $\Ss^n$  }\label{SnSHARPSECT}
We denote by $\Pi_N^{\Ss^n}(x, y)$ the degree $N$ spectral projections kernel on $\Ss^n$. To verify the sharpness of
Corollary \ref{JUMPCOR} we will need to use some explicit formulae for this kernel. We follow  \cite{AH} for notation\footnote{ The pair $(d, n)$ in \cite{AH} corresponds to $(n+1, N)$ in this article.} and refer there for the proofs.

In a well-known way, we  slightly change the definition of $\sqrt{-\Delta}$ to obtain operators $A^{\Ss^n}$ with positive integer eigenvalues:
$$
	A^{\Ss^n} = \sqrt{-\Delta^{\Ss^n}  + \frac{(n-1)^2}{4}} + \frac{n-1}{2}.
$$
We replace $\{\lambda_j\}_{j=1}^{\infty}$ and $\{\mu_k \}_{k=1}^{\infty}$ by ${\mathbb N}$, denoting the eigenvalues of  $A^{\Ss^n}$ by $\{N\}_{N=0}^{\infty}$ and
those of  $A^{\Ss^d}$ by $\{M\}_{M=0}^{\infty}$. Let $\Pi_N^{\Ss^n}$ denote the spectral projections onto the $N$th eigenspace $\hcal_N^{\Ss^n}$ of  $A^{\Ss^n}$, i.e the orthogonal projection onto
the space of spherical harmonics of $\Ss^n$ of degree $N$.

The submanifolds $H$ we consider  are sub-spheres (with  standard metrics) of $\Ss^n$.  
The notation for totally geodesic and latitude subspheres is as follows.
In standard Euclidean coordinates on $\R^{n+1}$, the unit sphere is defined by
$\sum_{j=1}^{n+1} x_j^2 = 1$. We define a latitude $d$-sphere of height $a$  to be the subspheres
$$
	\Ss^{n,d}(a)  : = \{\vec x \in \R^{n+1}:  \sum_{j=1}^{n+1} x_j^2 =1 , \;\; \sum_{j = d + 2}^{n+1} x_j^2 = a^2\}.
$$
Henceforth, we drop the superscript $n$ when the dimension is understood. 
 $SO(d+1)$ act by isometries on $\Ss^n$ and all latitude sub-spheres $H = \Ss^d$ are orbits of the action. 

The sup-spheres $\Ss^d(a)$ are totally geodesic when $a = 0$ and are not totally geodesic if $a > 0$.
In particular, if $d =1$ we obtain the closed geodesic, 
$$\gamma: = \Ss^{n,1}(0) : = \{\vec x \in \R^{n+1}:  \sum_{j=1}^{n+1} x_j^2 =1 , \;\; \sum_{j=3}^{n+1} x_j^2 = 0\},$$
which is the intersection $\Ss^{n} \cap \R^2_{x_1, x_2} $ where $\R^2_{x_1, x_2} \subset \R^{n+1}$ is the plane $x_3 = \cdots = x_{n+1} = 0$.

The  eigenspaces $\hcal_N^{\Ss^n}$ of $\Delta$ on $\Ss^n$ are spaces of degree $N$ spherical harmonics, i.e. restrictions to $\Ss^n$ 
of homogeneous harmonic polynomials on $\R^{n+1}$. We denote the orthogonal projection onto $\hcal_N^{\Ss^n}$ by 
\begin{equation} \label{PROJ}\Pi_N^{\Ss^n}: L^2(\Ss^n) \to \hcal_N^{\Ss^n}. \end{equation} As is well-known, $D_N^n:  = \dim \hcal_N^{\Ss^n} \sim C_n N^{n-1} $  (see e.g. \cite{StW}.) 
The more interesting
calculation occurs when we restrict $\Pi_N^{\Ss^{n}}(x, y)$ to the $SO(d+1)$ invariant coordinate sub-sphere $\Ss^d$ in $x, y$ and and sift out one `Fourier coefficient'
of the sub-sphere, i.e. one degree of spherical harmonic. 
We denote the restriction (in both variables) of \eqref{PROJ} to $\Ss^d \times \Ss^d$ by 
\begin{equation} \label{RESNOT} \gamma_{\Ss^d(a)}  \Pi^{\Ss^n}_N \gamma_{\Ss^d(a)}^* (x, y)
= [\gamma_{\Ss^d} \otimes \gamma_{\Ss^d} \Pi_N^{\Ss^n} ](x,  y), \;\;\;  (x, y \in \Ss^d)
.\end{equation}
When integrating over $\Ss^d$ it is clear that the variables are restricted to this submanifold and we drop the restriction operators $\gamma_{\Ss^d}$
for simplicity of notation.

For any $0 < c  \leq 1$ and $N$,  and for $\epsilon $ sufficiently small (depending on $c$), there exists at most one  $M = M(N, c) $ satisfying $|M - c N| < \epsilon$. We always assume that $\epsilon$ is chosen this way.
In fact, as discussed in Section \ref{S2SPARSE},  there might not exist any such $M$ for small $\epsilon$. To illustrate this, suppose  $c=\half$. Then only even $N$ will contribute and $M = N/2$.
If $N$ is odd, there does not exist any $M$ within $\epsilon$ of $c N$, so that  \eqref{JDEF} is zero 
for odd $N$ and non-zero for even $N$.

We now give an explicit formula for the jumps  \eqref{JDEF} when $\Ss^d \subset \Ss^n$ is any latitude subsphere.  
\begin{lemma}\label{JUMPSPHERE}    Fix $c \in (0, 1]$ and  $(M, N, \epsilon) $  that one (and only one) degree $M(c, N)$ satisfies
$|M - c N| < \epsilon$, one has
\begin{equation} \label{JDEFSn} 
J^{c} _{\epsilon, \Ss^d  }(N) :=  \int_{\Ss^d} \int_{\Ss^d}  \Pi_N^{\Ss^n} (x, y)\;  \Pi_{M(c, N)} ^{\Ss^d}(x, y)  d V_{\Ss^d}(x) dV_{\Ss^d}(y).
\end{equation}
\end{lemma}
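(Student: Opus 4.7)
The plan is to prove Lemma \ref{JUMPSPHERE} by a direct computation: we unpack the definition of $J^c_{\epsilon,\Ss^d}(N)$, expand the modulus squared, and recognize the resulting sums as spectral projection kernels evaluated along the submanifold.

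First, I would rewrite the jump in the notation appropriate to $\Ss^n$. With the shifted operators $A^{\Ss^n}$, $A^{\Ss^d}$, the eigenvalue labels become integers, and the inner sum over $\{\ell : \lambda_\ell = N\}$ in \eqref{JDEF} becomes a sum over an orthonormal basis $\{\phi^{(N)}_j\}$ of the degree-$N$ spherical harmonics $\hcal_N^{\Ss^n}$. Similarly, I fix an orthonormal basis $\{\psi^{(M)}_k\}$ of $\hcal_M^{\Ss^d}$. By the assumption that $\epsilon$ is small enough that $M(c,N)$ is the unique integer $M$ with $|M - cN| < \epsilon$, the outer spectral window collapses to a single value $M = M(c,N)$, so
$$
J^c_{\epsilon,\Ss^d}(N) = \sum_{j} \sum_{k} \left| \int_{\Ss^d} \phi^{(N)}_j(x)\, \overline{\psi^{(M(c,N))}_k(x)}\, dV_{\Ss^d}(x) \right|^2.
$$

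Next, I would expand the modulus squared as a double integral over $\Ss^d \times \Ss^d$ and interchange the (finite) $j,k$ sums with the integration. This gives
$$
J^c_{\epsilon,\Ss^d}(N) = \int_{\Ss^d}\!\!\int_{\Ss^d} \Bigl( \sum_j \phi^{(N)}_j(x)\, \overline{\phi^{(N)}_j(y)} \Bigr) \Bigl( \sum_k \overline{\psi^{(M(c,N))}_k(x)}\, \psi^{(M(c,N))}_k(y) \Bigr) dV_{\Ss^d}(x)\, dV_{\Ss^d}(y).
$$
The first parenthesized sum is, by definition, the Schwartz kernel $\Pi^{\Ss^n}_N(x,y)$ restricted to $x,y \in \Ss^d$, which is exactly the restricted kernel in the notation of \eqref{RESNOT}. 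The second is $\overline{\Pi^{\Ss^d}_{M(c,N)}(x,y)}$, which equals $\Pi^{\Ss^d}_{M(c,N)}(x,y)$ because projection kernels are Hermitian and real-valued (one can always choose a real orthonormal basis of spherical harmonics, or invoke $\Pi_M(x,y)=\overline{\Pi_M(y,x)}$ combined with symmetry $\Pi_M(x,y)=\Pi_M(y,x)$).

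Assembling these identifications yields precisely \eqref{JDEFSn}. There is no serious obstacle here: the only points requiring care are the basis-independence of $\Pi^{\Ss^n}_N$ and $\Pi^{\Ss^d}_{M(c,N)}$ (so that the formula depends only on $(N,M(c,N))$ and not on the chosen orthonormal bases) and the reduction of the inner $k$-sum to a single value of $M$, which is the hypothesis on $\epsilon$.
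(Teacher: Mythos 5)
Your proposal is correct and follows essentially the same route as the paper: expand the modulus squared into a double integral, sum over the orthonormal bases of $\hcal_N^{\Ss^n}$ and $\hcal_{M(c,N)}^{\Ss^d}$, and identify the two resulting sums as the projection kernels $\Pi_N^{\Ss^n}(x,y)$ and $\Pi_{M(c,N)}^{\Ss^d}(x,y)$. The only difference is that you carry out the $\ell$- and $k$-sums in one step and spell out the conjugation/reality details, whereas the paper does them sequentially and leaves those details implicit.
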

\begin{proof}
For each fixed tangential mode $\psi_k$,  the sum over $\ell$ in \eqref{JDEF}  takes the form, 
$$
	\sum_{\ell : \lambda_\ell = N} \left| \int_{H} \phi_{\ell} \overline{\psi_k}dV_H \right|^2  = \int_{\Ss^d} \int_{\Ss^d}  \Pi_N^{\Ss^n} (x, y)\overline{\psi_k(x)} \psi_k(y) d V_{\Ss^d}(x) dV_{\Ss^d}(y),
$$
and the sum over all $k$ 
giving the jump \eqref{JDEF} is given by \eqref{JUMPSPHERE}.
\end{proof}
Thus, there exists at most  one    full projector $\Pi_{M(c, N)} ^{\Ss^d}$ of  $\Ss^d$ contributing to the sum   if $\epsilon < \frac{c}{2} $ is sufficiently small that each term of the
arithmetic progression $\{c N\}_{N=1}^{\infty}  $ is  $\epsilon$- close to only one integer $M(N,c)$. Note that $\gamma_{\Ss^d(a)}  \Pi^{\Ss^n}_N \gamma_{\Ss^d(a)}^* (x, y)$ is not a spherical harmonic on $\Ss^d$  in either variable, but is a sum of harmonics of degrees $0, \dots, N$. The integrals in Lemma \ref{JUMPSPHERE} are special kinds of Clebsch-Gordan integrals. For $n=2$, the integrals are evaluated in \eqref{ZONALMER} when $H$ is a geodesic.  To our knowledge, the integrals have not been studied for subspheres of  higher dimensional spheres, although they are not hard if $d=1$ (see \cite{WXZ+}).


For general $n$ and $d=1$, we restrict to 
 $x_3 = x_4 = \cdots = x_{n+1} =0$. When $0<c<1$ is rational, we choose $M < N$ with $\frac{M}{N} = c$ and  assume $\epsilon $ small enough so it is the unique integer
 satisfying $|M - c N| < \epsilon$.  The restricted Fourier coefficient is then,
$$\int_{S^1}  \int_{S^1} \Pi^{\Ss^{n}}_N(\langle x(\theta_1), y(\theta_2) \rangle)  
e^{- i N (\theta_1 - \theta_2) } d\theta_1 d \theta_2.$$
Let $SO(2) \simeq S^1 \subset SO(d)$ denote the 1-parameter subgroup so that the orbit of $e_1$ is the circle above. In the space
$ \hcal_N^{\Ss^n}$,  the integral sifts out the orthogonal projection  $\Pi_N^{n, M}$  to the  subspace $\hcal_N^M$ of spherical harmonics which  transform by $e^{i M \theta}$ when translated
by the circle action. 
 The integral equals $\gamma_{S^1} \Pi_N^{n, 0}\gamma_{S^1}^*(x, x)$
where $x \in S^1$ is any point. Since  $\dim \hcal_N^M(\Ss^{n}) = C_{n} N^{n-1}$,  its order of magnitude is $N^{n-1}$. This proves that the statement of Theorem \ref{main 3} is sharp for general $n$
and $d=1$. 

 For general $(n,d)$ with $d > 1$,   explicit estimates for  the Fourier coefficient sums of restrictions to latitude spheres  for $\Ss^n$ when $n > 3$ are much
more difficult  by means of classical analysis  (see
Section \ref{LEGENDREc<1} for $n=2$).



\subsection{Flat tori}\label{FLATSECT}

Let $H$ be a $d$-dimensional coordinate plane in the $n$-dimensional torus with the usual eigenfunctions $\phi_j(x) = \exp(i j \cdot x)$ and $\psi_k(x') = \exp(i k \cdot x')$. Here, $x = (x_1,...,x_n)=(x',x'')$ with $x' = (x_1,...,x_d)$ and $x'' = (x_{d+1},...,x_n)$. Then each $ |\langle \gamma_H \phi_j, \psi_k \rangle_{L^2(H)}|^2$ is a power of $2\pi$ times $\delta_{j' - k}$. The ladder sum now just counts the lattice points $j \in \Z^n$ such that $|c|j| - |j'|| < \epsilon$ and $|j| \leq \lambda$. When $0<c<1$, this region is asymptotic to an $\epsilon$ thickening of a (codimension 1) cone in $\R^n$ and has volume to the order of $\lambda^{n-1}$. Now run this construction again except replace the sharp cutoff by a fuzzy cutoff. The main term of the ladder sum agrees (up to a constant and a lower order term) with the volume of this thickened cone, $\lambda^{n-1}$, which does not depend on $d$. 

To illustrate the necessity of the hypotheses of Theorem \ref{main 3}, we consider the two-dimensional case. We write $j = (j_1,j_2) \in \Z^2$ and observe
\[
	N_{\epsilon,H}^c(\lambda) = (2\pi)^{-1} \#\{ j \in \Z^n : |j| \leq \lambda, \ ||j_1| - c|j|| \leq \epsilon \}.
\]
The region capturing the lattice points in the set above,
\begin{equation}\label{lattice ladder}
	\{ \xi \in \R^2 : ||\xi_1| - c|\xi|| \leq \epsilon \},
\end{equation}
has hyperbolas for boundaries and is asymptotically
\[
	\left\{ \xi \in \R^2 : \left|\sqrt{1 - c^2} |\xi_1| - c |\xi_2|\right| \leq \frac{\epsilon}{\sqrt{1 - c^2}} \right\},
\]
the union of two strips of slope $\pm \frac{\sqrt{1 - c^2}}{c}$ and thickness $\frac{2\epsilon}{\sqrt{1 - c^2}}$. We conclude that the area of the region \eqref{lattice ladder} within the the ball of radius $\lambda$ is asymptotic to
\[
	8 \epsilon (1 - c^2)^{-1/2} \lambda,
\]
which is consistent with the main term of Theorem \ref{main 3}. However, if the slope $c/\sqrt{1 - c^2}$ is rational, we may carefully select two different values of $\epsilon$ which yield exactly the same count of points for $N_{\epsilon,H}^c(\lambda)$. The difference in the main terms must be absorbed into the remainder. Hence, the improved remainder {in Theorem \ref{main 3}}  may not be obtained in this setting.

\subsection{Hyperbolic quotients}\label{HYPERBOLIC} 
 Fourier coefficients of restrictions of  eigenfunctions on hyperbolic surfaces  to closed geodesics, 
to horocycles or distance circles is a classical problem in automorphic forms. 
 To our knowledge, the only case studied rigorously to date is that of cuspidal eigenfunctions $\psi$ of the modular curve ${\mathbb  H}^2/SL(2, \Z)$  to a closed horocycle $H_y$ of `height' $y$  \cite[Page 428]{Wo04}. This is a case where $c=1$ and caustic effects occur,
and the estimates of \cite{Wo04} are of the same nature as the $N^{1/6}$ estimate for spherical harmonics in the caustic case. It seems likely
that in the negatively curved case, one can improve such estimates by powers of $\log N$.  Such effects
will be studied in future work.

\subsection{Singularities for $t \not=0$ \label{SINGtnot0} } 
In this section, we discuss the existence of two-term asymptotics discussed  in Section \ref{REMAINDERSECT}.

To determine two-term asymptotics, it is necessary to allow the support of $\hat{\rho}$ to be any finite interval and   to calculate the
contribution of all sojourn times $t \in \Sigma^c(\psi)$ with fixed point sets of maximal dimension. Thus, we
 need to determine the connected components  of all $(c, s, t)$ bi-angles for general $t$, i.e. solve \eqref{BIANGLEDEF} for all $(s,t)$ with $s \in \supp \hat\psi$ and to locate the maximal
components with $t \not=0$ of  the same dimension as the principal component at $t=0$.

We now assume further  that the set of $(c, s, t)$-bi-angles with $t = T \in {\rm singsupp}\; S^c(t, \psi) \backslash \{0\}$  is a union of clean components $Z_j(T)$ of dimension $d_j(T)$, where  $Z_j(T)$ is a component of  $ \gcal^T_c$.
Then, for $t$ sufficiently close to $T$,  there exist Lagrangian distributions $\beta_j$ on $\R$ with singularities only at $t=0$ such that, $$\begin{array}{l} S^c(t, \psi) = 
\sum_j \beta_j(t - T), \;\; \beta_j(t) = \int_{\R} \alpha_j(s) e^{- i s t} ds,\\ \\ 
 \;\; \text{ with}\;\; \alpha_j(s) \sim (\frac{s}{2 \pi i})^{ -1 + \half (n -d)+\frac{d_j(T)}{2}}\;\; i^{- \sigma_j} \sum_{k=0}^{\infty} \alpha_{j,k} s^{-k}, \end{array}$$
where $d_j(T) $ is the dimension of the component $Z_j(T)$.   We refer to Section \ref{CLUSTERSECT}  for background on the role of  periodicity properties of $G_M^t$ and $G_H^s$ in the existence of  maximal
components $Z_j(T)$ for $T \not=0$, i.e. in whether  ``fixed point sets'' defined by \eqref{EQ} and \eqref{LIFT}  can  be of the same full dimension as for the principal component at 
$t = 0$. When $c < 1$, $G_M^{cs + t}$ must take  $S^c_q M \to S^c_q M$ for every $q \in H$,  and moreover must map each 
set $S^c_{q, \eta} M: = \{\xi \in S^c_q M: \pi_H \xi = \eta\}$ for $(q, \eta) \in B^*H$  into itself. 



\begin{proposition} \label{MORESINGS} 
  Let $\rho \in \scal(\R)$ with $\hat{\rho} \in C_0^{\infty}$ and with $0 \notin {\rm supp} \hat{\rho}$. Assume that  the bi-angle equation is clean in the sense of Definition \ref{CLEAN}, and let $\scal_{\psi} = {\rm singsupp}  S^c(t, \psi) \backslash \{0\}.$ Denote by  $d_j$ the dimension  of a component $Z_j$ of  
  $\gcal_c^{t} $ where $t$ is a non-zero period. Then, there exists $\beta_j \in \R$ and a complete asymptotic expansion,
$$  N^{c} _{\rho, \psi, H  }(\lambda) \sim \lambda^{-1 + \half (n -d) }  \sum_{T \in \scal(\psi)} \sum_{\ell=0}^{\infty} \beta_{\ell} (t -T) \; \lambda^{\frac{d_j(T) }{2} -\ell},$$
The asymptotics are of lower order than the principal term of Theorem \ref{main 2} (resp. Theorem \ref{main 5})  unless there exists a maximal component.
\end{proposition}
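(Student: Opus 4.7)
The plan is to reduce the statement to a localized Fourier-transform asymptotic for a Lagrangian distribution at each sojourn time, using the singularity structure of $S^c(t,\psi)$ already recorded in the excerpt. The starting point is the oscillatory-integral representation
\[
N^c_{\rho,\psi,H}(\lambda) \;=\; \int_\R \hat\rho(t)\,e^{-it\lambda}\,S^c(t,\psi)\,dt.
\]
Since $\hat\rho \in C_0^\infty(\R)$ and, by the cleanliness hypothesis together with the compactness of the relevant canonical relations over a bounded $t$-range, $\scal_\psi$ intersects $\supp\hat\rho$ in a finite set $\{T_1,\dots,T_N\}$, I would first choose a smooth partition $\hat\rho = \hat\rho_\infty + \sum_k \hat\rho_k$ with $\hat\rho_k$ supported in a small interval $I_k\ni T_k$ containing no other singularity and $\hat\rho_\infty$ supported where $S^c(\cdot,\psi)$ is smooth. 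The piece involving $\hat\rho_\infty$ then contributes $O(\lambda^{-\infty})$ by non-stationary phase, so only the sum over $k$ matters.

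On each $I_k$, the local Lagrangian decomposition stated in the excerpt,
\[
S^c(t,\psi) \;=\; \sum_j \beta_j(t-T_k) + r_k(t),\qquad \beta_j(t) = \int_\R \alpha_j(s)\,e^{-ist}\,ds,
\]
with $r_k\in C^\infty(I_k)$ and $\alpha_j$ a classical symbol of order $-1+\tfrac12(n-d)+\tfrac12 d_j(T_k)$, reduces everything to computing
\[
\int \hat\rho_k(t)\,e^{-it\lambda}\,\beta_j(t-T_k)\,dt = e^{-iT_k\lambda}\int \alpha_j(s)\Bigl[\int \hat\rho_k(u+T_k)\,e^{-iu(\lambda+s)}\,du\Bigr]ds,
\]
after the change of variable $u=t-T_k$. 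The bracketed inner integral equals $e^{iT_k(\lambda+s)}\rho_k(\lambda+s)$ with $\rho_k\in\scal(\R)$, and substituting $s\mapsto s-\lambda$ shows the entire expression is a Schwartz-weighted pairing that sees $\alpha_j$ only near $s=-\lambda$. Applying the classical symbol expansion of $\alpha_j$ at $s=-\lambda$ (equivalently, stationary phase at the unique critical point of the rescaled oscillatory integral) yields
\[
\int \hat\rho_k(t)\,e^{-it\lambda}\,\beta_j(t-T_k)\,dt \;\sim\; e^{-iT_k\lambda}\,\lambda^{-1+\frac{n-d}{2}+\frac{d_j(T_k)}{2}}\sum_{\ell=0}^{\infty} c_{j,\ell,k}\,\lambda^{-\ell}.
\]
Summing over $j$ and over the sojourn times $T_k$ gives the claimed expansion, with the leading order at each $T$ governed by $\max_j d_j(T)$; since by hypothesis this is strictly less than $\dim\gcal_c^{0,0}$ unless a maximal component exists away from $t=0$, the contribution is lower-order than the principal term of Theorem~\ref{main 5}.

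The main obstacle is Step two: verifying that the local Lagrangian expansion of $S^c(t,\psi)$ at a nonzero sojourn time $T_k$ has exactly the stated symbol order and Maslov index, with the correct symplectic-volume coefficient on each component $Z_j(T_k)$. This requires re-running the canonical-relation composition that produced the $t=0$ asymptotics of Theorem~\ref{main 5}—the composition of the Schwartz kernels of the half-wave propagators on $M$ and on $H$ with the pull-back to $H\times H$—but now centered at a bi-angle whose $M$-leg has length $c s + T_k$ rather than $cs$. The cleanliness of the composed relation must be checked via bi-angle Jacobi fields along such honest bi-angles (as in the discussion preceding Definition~\ref{CLEAN}), and the symbol along each clean component $Z_j(T_k)$ must be computed from the induced half-density on the critical set exactly as in the $t=0$ case. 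Once this is in place, Steps one, three, and the final assembly are routine.
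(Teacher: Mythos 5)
Your argument is correct and follows the same route as the paper: decompose $S^c(t,\psi)$ near each non-zero sojourn time into Lagrangian pieces of known order, then Fourier-invert against $\hat\rho_k\,e^{-it\lambda}$ to extract the $\lambda$-expansion (this step is carried out once in Lemma \ref{CONVOLUTION} and applies verbatim here). The one thing worth flagging is that what you call the ``main obstacle'' is not actually open: Propositions \ref{LRLEMPROP} and \ref{MAINFIOPROP} are already stated and proved for all $t$, not just $t=0$, and formula \eqref{ORDST} (with \eqref{genord}) directly gives the order $-\tfrac34+\tfrac12(n-d)+\tfrac12\dim\gcal_c^T$ at any non-zero period $T$; the cleanliness at $T$ is part of the hypothesis of the Proposition (Definition \ref{CLEAN} concerns $\gcal_c$, i.e., all $(s,t)$, not merely the $t=0$ slice). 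So rather than ``re-running the canonical-relation composition,'' one simply specializes the existing order formula to $t=T$ and invokes \cite[Theorem 4.5]{DG75} for the local decomposition $S^c(t,\psi)=\sum_j\beta_j(t-T)$ near $T$, which is exactly what the paper does.
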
 

To obtain two-term asymptotics of the type discussed in  Section \ref{REMAINDERSECT},  one needs to specify the maximal components, and to calculate the associated $\beta_{\ell}$ and
$\alpha_j$ in geometric terms. In effect, the function $\qcal^c_{\psi, H}(\lambda)$ is a sum over the maximal components for all $t \not= 0$. Its calculation  is postponed to a subsequent study.

\section{Fuzzy ladder projectors and Kuznecov formulae}
In this section, we set up the main objects in the proof  Theorem \ref{main 4}. We use the terminology of
 the `fuzzy ladders'  of \cite{GU89} to describe the main operators and their canonical relations.  However, there are some significant differences in that we consider `fuzzy' ladders with respect
to two elliptic operators with erratically distributed eigenvalues, rather than with respect to a compact group such as $S^1$ in \cite{GU89} with 
a lattice of eigenvalues. 

\subsection{Notation}

Since we are often dealing with operators on product spaces, we use the notation $f\otimes g$ for  a function on $X \times Y$
of the product form $f(x) g(y)$. Linear combinations of such functions are of course dense in $L^2(X \times Y)$ and it suffices
to define operators on product spaces on such product functions.

We introduce the two commuting operators on $M \times H,$
\[
	P_M =  \sqrt{-\Delta_M}  \otimes I \qquad \text{ and } \qquad P_H = I \otimes  \sqrt{-\Delta_H},
\]
and denote an orthonormal basis of their joint eigenfunctions by 
\begin{equation}\label{def phi_j,k}
	\phi_{j,k} = \overline{\psi_k} \otimes \phi_j.
\end{equation}
Thus, we have
$$(P_M,P_H)\phi_{j,k} = (\lambda_j,\mu_k)\phi_{j,k}. $$ As discussed in \cite{GU89},  $P_M$ and $P_H$ are  not quite pseudodifferential operators due to 
singularities in their symbols on $0_H \times \dot T^*M \cup \dot T^*H \times 0_M$, where $0_M$ denotes the zero section. These singularities lie far from the
canonical relations determining the asymptotics and therefore may be handled by suitable cutoffs as in \cite{GU89}. 

We then  introduce the operators on $C^{\infty}(M \times H)$ defined by, 
\begin{equation} \label{PQ} \left\{\begin{array}{l} P : = P_M: =   \sqrt{-\Delta}_M \otimes I
 , \\ \\ 
 Q_c:= c \sqrt{-\Delta}_M \otimes I -  I \otimes \sqrt{-\Delta}_{H} = c P_M - P_H. \end{array} \right. \end{equation} 
The system $(P, Q_c)$ is elliptic;  $Q_c $ is  a non-elliptic first order pseudo-differential operator of real principal type with characteristic variety, \begin{equation}
\label{CHARQ} \Char(Q_c): \{(x, \xi, q, \eta) \in \dot T^*M \times \dot T^*H: c |\xi|_g - |\eta|_{g_H} = 0\}. \end{equation} 


As in \cite{GU89} we are interested in the ``nullspace" of  $Q_c$, i.e. its $0$-eigenspace. The corresponding pairs of eigenvalues
of $(P_M, P_H)$ would concentrate along a ray of slope $c$ in $\R^2$.  Except in rare situations with symmetry, there are at most finitely
many such eigenvalue pairs, but there always exist an approximate or `fuzzy' null-space, intuitively defined  by a strip around the  ray or ladder in the joint spectrum of the pair
$(P_M, P_H)$.
A  `ladder strip' as in  \cite{GU89} is  defined by a strip around a ray in the joint spectrum, 
$$
	\{(\lambda_j, \mu_k):  |\mu_k - c \lambda_j| \leq \epsilon \} \subset \R_+^2.
$$
It is usually difficult to study a strip  directly, and in place of the indicator function of a strip one constructs Schwartz test functions which concentrate in the strip and are
rapidly decaying outside of it in $\R^2$.  When one uses such a test function rather than an indicator function one
gets a  `fuzzy ladder.'

Such ladders, sharp or truly fuzzy, arise when one studies eigenvalue ratios $\frac{\mu_k}{\lambda_j}$ in intervals of width
$O(\lambda_j^{-1})$. These are very short intervals and it is much more difficult to obtain asymptotics for such short intervals than
for intervals of constant width. To this end, one can study wedged (or coned)
$$
	\{(\lambda_j, \mu_k): | \frac{\mu_k}{\lambda_j} - c| \leq \epsilon\} \subset \R_+^2
$$
along the ray of slope $c$ in $\R_+^2$ in the joint spectrum. See section \ref{RELATED} for the corresponding Weyl sums. Asymptotics for cones and some slowly thickening ladders were obtained in \cite{WXZ20}, as well as ladders which are sufficiently fuzzy as in Theorem \ref{main 2}. In this article, the aim is to obtain improved asymptotics for both sharp and fuzzy ladders.

\subsection{Fuzzy ladder projectors} 
To prove Theorem \ref{main 4} by Fourier integral operator methods, we need to smooth out the projection operators onto the spectral subspaces
corresponding to these ladders. We therefore introduce $\psi \in \scal(\R)$ with $\hat{\psi} \in C_0^{\infty}(\R)$ and define, 
\begin{equation} \label{FLP}
	\begin{split}
	\psi(Q_c) &: L^2(M \times H) \to L^2(M \times H), \\
	\psi(Q_c) &= \int_{\R} \hat{\psi}(s) e^{i s Q_c} ds = \sum_{j,k} \psi(\mu_k - c \lambda_j) \phi_{j,k} \otimes \phi_{j,k}^*.
	\end{split}
\end{equation}
Here, $\varphi_{j,k}^*$ denotes the linear functional dual to $\varphi_{j,k}$ in $L^2(M \times H)$, and hence $\varphi_{j,k} \otimes \varphi_{j,k}^*$ is the rank-$1$ projector onto the line spanned by $\varphi_{j,k}$. $\psi(Q_c)$ is sometimes denoted $\Pi_c$ to emphasize that it is an approximate projector.

The operator \eqref{FLP}  is a smoothing of the  sharp fuzzy ladder projection 
of  $L^2(M \times H)$ onto the span of the joint eigenfunctions
for which $| \mu_k - c \lambda_j | \leq \epsilon$ for some $\epsilon > 0$. 
It  is a Fourier integral operator of real principal type, whose properties we now review. 
The  characteristic variety of $Q_c$ is the hypersurface in $\dot T^* M \times \dot T^*H$ defined by \eqref{CHARQ}.
Its characteristic (null) foliation is given by the integral curves of 
the Hamiltonian $c |\xi|_g -  |\eta|_{g_{H}}$, i.e. the orbits of the flow $G^{c s}_M \otimes G_H^{-s}$  on $T^* M \times T^*H$
restricted to the level set $\Char(Q_c) $. 
The next Lemma is similar to calculations in \cite{GU89} and \cite[Proposition 2.1]{TU92}:

\begin{lemma} \label{WFQc}
$\psi(Q_c)$ of \eqref{FLP} is a Fourier integral operator in the class $ I^{-\half}((M \times H) \times (M \times H)), {\ical_{\psi}^c}')$ with canonical relation
\begin{multline*}
	\ical^c_{\psi} :=  \{(x, \xi, q, \eta; x', \xi',  q', \eta') \in  {\Char}(Q_c)  \times  {\Char}(Q_c) : \\
	\exists s \in \supp(\hat{\psi}) \text{ such that } G^{c s}_M \times G^{-s}_{H}(x, \xi, q, \eta) = (x', \xi', q', \eta') \}.
\end{multline*}
The symbol of $\psi(Q_c)$ is the transport of $(2 \pi)^{-\frac{1}{2}} \hat{\psi}(s) |\dd s|^{\half} \otimes |\dd \mu_L|^{\half}$ via the implied parametrization $(s,\zeta) \mapsto (\zeta, G_M^{cs} \times G_H^{-s}(\zeta))$, where $\mu_L$ is Liouville surface measure on $\Char(Q_c)$. 
\end{lemma}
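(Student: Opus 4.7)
The plan is to exploit the tensor-product structure of $Q_c$ and the well-known FIO structure of the half-wave groups, then integrate in $s$ to pass from a family to a Lagrangian distribution.

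\textbf{Step 1: Reduce to a product of half-wave groups.} Since $P_M = \sqrt{-\Delta_M}\otimes I$ and $P_H = I\otimes\sqrt{-\Delta_H}$ commute (they act on different factors), functional calculus gives
\[
e^{isQ_c} \;=\; e^{iscP_M}\otimes e^{-isP_H} \;=\; U_M(cs)\otimes U_H(-s),
\]
where $U_M(t)=e^{itP_M}$ and $U_H(t)=e^{itP_H}$ are the half-wave propagators of $M$ and $H$. By Duistermaat--Hörmander, for $|t|$ less than the injectivity radius, each $U_M(t)$, $U_H(t)$ is a zeroth-order Fourier integral operator whose canonical relation is the graph of the geodesic flow (lifted homogeneously to $\dot{T}^*$). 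We may assume $\supp\hat\psi$ is small enough to avoid caustics; the general case follows by a partition-of-unity argument exactly as in \cite{GU89}. The one subtlety, also present in \cite{GU89}, is that $P_M,P_H$ are not globally pseudodifferential on $M\times H$ because of singularities on the zero sections of the other factor; these are resolved by microlocal cutoffs away from $0_M\times\dot T^*H\cup\dot T^*M\times 0_H$, which do not affect the canonical relation under consideration since $\ical^c_\psi\subset\Char(Q_c)\times\Char(Q_c)$ lies in $\dot T^*(M\times H)\times\dot T^*(M\times H)$.

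\textbf{Step 2: Tensor product of FIOs.} The external tensor product $U_M(cs)\otimes U_H(-s)$ is a Fourier integral operator on $(M\times H)\times(M\times H)$ of order $0$, with canonical relation the graph of $G_M^{cs}\times G_H^{-s}$ (a Lagrangian in $T^*(M\times H)\times T^*(M\times H)$ with the twisted symplectic form). Its principal symbol, on the graph, is the canonical Riemannian half-density on $\dot T^*M\times\dot T^*H$ pulled back via the parametrization by the source variable $\zeta$; this is a standard computation from the transport equation solved by $U_M,U_H$.

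\textbf{Step 3: Integrate in the parameter $s$ and check the clean phase / real principal type condition.} Writing
\[
\psi(Q_c)=\int_{\R}\hat\psi(s)\,\bigl(U_M(cs)\otimes U_H(-s)\bigr)\,ds,
\]
we recognize the integrand as a smooth one-parameter family of zeroth order FIOs whose canonical relations sweep out $\ical^c_\psi$. Because $Q_c$ is of real principal type --- its principal symbol $c|\xi|_g-|\eta|_{g_H}$ has nonvanishing Hamilton vector field on its zero set away from the zero sections --- the map $(s,\zeta)\mapsto(\zeta,G_M^{cs}\times G_H^{-s}\zeta)$ from $\supp\hat\psi\times\Char(Q_c)$ into $T^*(M\times H)^2$ is an immersion transverse to the fiber $s=\mathrm{const}$, so $\ical^c_\psi$ is an immersed conic Lagrangian and the phase stacking is clean. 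Applying the standard integration-in-a-parameter lemma for FIOs (see Hörmander IV, Thm. 25.2.3 or the statement in Guillemin--Uribe \cite{GU89}), integrating a zeroth order family against a $C^\infty_0$ density in $s$ produces an FIO of order $0-\tfrac12=-\tfrac12$ whose canonical relation is the union of the individual graphs.

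\textbf{Step 4: Identify the symbol.} The principal symbol is computed by pushing forward under the parametrization: the family of symbols is the canonical half-density on each graph (= $|d\mu_L|^{1/2}$ on $\Char(Q_c)$ in the Leray/Liouville parametrization), the integration in $s$ contributes the factor $\hat\psi(s)|ds|^{1/2}$, and the usual normalizing constant $(2\pi)^{-1/2}$ arises from the $ds$-integration convention on oscillatory integrals. Transporting this half-density on $\supp\hat\psi\times\Char(Q_c)$ to $\ical^c_\psi$ via $(s,\zeta)\mapsto(\zeta,G_M^{cs}\times G_H^{-s}\zeta)$ yields the claimed formula.

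\textbf{Main obstacle.} The conceptually clean step is the identification of the canonical relation, which is forced by functional calculus. The technical work is concentrated in Steps 3--4: one must verify the clean-phase / transversality condition under which integration in $s$ lowers the order by $\tfrac12$ and produces a well-defined FIO symbol, and one must correctly track the half-density factors (the $(2\pi)^{-1/2}$, the $|ds|^{1/2}$, and the Liouville density $|d\mu_L|^{1/2}$ on $\Char(Q_c)$) through the parametrization. The singularity of $P_M,P_H$ at the zero sections introduces no genuine difficulty because $\ical^c_\psi$ is uniformly bounded away from them, and the microlocal cutoffs of \cite{GU89} remove the pathology without altering the symbol on the canonical relation.
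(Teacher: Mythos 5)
Your proof is correct and is, in substance, the same argument the paper makes; the difference is presentational. The paper's proof is a one-step appeal to \cite[Proposition 2.1]{TU92} (that for a first-order self-adjoint operator $Q$ of real principal type, $\psi(Q)\in I^{-1/2}$ with canonical relation the flowout of $\Char Q$ under $\exp sH_{\sigma_Q}$ for $s\in\supp\hat\psi$) together with \cite[Lemma 2.6]{TU92} for the symbol; the only supplementary observation made is that the Hamilton flow of $\sigma_{Q_c}$ on $\Char(Q_c)$ is $G_M^{cs}\times G_H^{-s}$. You instead unpack what lies behind that citation, exploiting the tensor-product structure $e^{isQ_c}=U_M(cs)\otimes U_H(-s)$ to exhibit the integrand as an explicit order-$0$ FIO family and then carrying out the $s$-integration / order drop of $\tfrac12$ by hand. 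Both are fine; your version is more self-contained, the paper's is more general. Two small points worth tightening in your write-up: (1) Hörmander IV, Thm.~25.2.3 is the clean composition theorem, not an ``integration-in-a-parameter'' lemma; the correct reference for the step you need is the flowout/real-principal-type Lagrangian construction (as in TU92 Prop.~2.1 or Hörmander's treatment of $\psi(Q)$), or alternatively view $e^{isQ_c}(\cdot,\cdot)$ as a Lagrangian distribution of order $-\tfrac14$ on $\R\times(M\times H)^2$ and push forward by the projection that integrates out $s$. (2) The immersion/cleanliness in Step~3 is precisely the statement that $H_{\sigma_{Q_c}}\neq 0$ on $\Char(Q_c)$ away from the zero sections (this is what ``real principal type'' buys you), and it is worth saying so explicitly, since that is the hypothesis which makes $(s,\zeta)\mapsto(\zeta,G_M^{cs}\times G_H^{-s}\zeta)$ an immersion.
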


Since we often use the term {\it Liouville measure} we give the general definition.

\begin{definition} \label{LIOUVILLEDEF} Let $Y \subset T^*M$ be a hypersurface defined by $\{f = 0\}$ . By the Liouville measure on $Y$, we mean
the Leray form $\frac{\Omega}{df}$  on $Y$, where $\Omega$ is the symplectic volume form of $T^*M$. \end{definition}

We now prove Lemma \ref{WFQc}.
\begin{proof} 
Let $\zeta = (x, \xi, q, \eta) \in T^* (M \times H)$.
Since $Q_c$ is of real principal type, we may apply  \cite[Proposition 2.1]{TU92} to obtain that $\psi(Q_c) \in I^{-\half} ((M \times H) \times (M \times H), {\ical_{\psi}^c}')$, where
\begin{multline} \label{TU}
	\ical^c_{\psi} = \{(\zeta_1, \zeta_2) \in  \dot{T}^* (M \times H) \times \dot{T}^* (M \times H): \sigma_{Q_c} (\zeta_1) = 0, \\
	\exists s \in \supp \hat{\psi}, \ 
	\exp s H_{\sigma_{Q_c}} (\zeta_1) = \zeta_2\}.
\end{multline}
The Hamiltonian flow $\exp s H_{\sigma_{Q_c}}$ on the characteristic variety is given by
$$
	G^{cs}_M \times G^{-s}_{H}:  \Char(Q_c)  \to  \Char(Q_c).
$$
By \cite[Lemma 2.6]{TU92}, the principal symbol of $\psi(Q_c)$ is $(2 \pi)^{-\frac{1}{2}} \hat{\psi}(s) |\dd s|^{\half} \otimes |\dd \mu_L|^{\half}$.
\end{proof}

\begin{remark} Without the support condition, the wave front relation is an equivalence relation on points of $\Char(Q_c)$, namely $(x, \xi, q, \eta) \sim (x', \xi', q', \eta')$ if they lie on the null bi-characteristic.
\end{remark}

\subsection{Elliptic cutoff} 

Since $\psi(Q_c)$ is not elliptic, we  introduce a second smooth cutoff $\rho \in \scal(\R), $ with $\hat{\rho} \in C_0^{\infty}$ and define
$$
	\rho(P - \lambda) = \frac{1}{2\pi} \int_{\R} \hat{\rho}(t)  e^{- it \lambda} e^{i t P} dt.
$$
Thus, $\rho(P- \lambda) \psi(Q_c) : L^2(M \times H) \to L^2(M \times H)$ is the operator,
\begin{equation} \label{RHOPSI} \rho(P- \lambda) \psi(Q_c) = \sum_{j,k}\rho(\lambda_j - \lambda)   \psi(\mu_k - c \lambda_j) \phi_{j,k}  \otimes
\phi_{j,k}^*. \end{equation} 
To understand its purpose, we note that if  the cutoff $\rho$ were the indicator function of an interval $[-1, 1]$, it would
restrict the $\lambda_j$ to $[\lambda -1, \lambda +1]$, while if $\psi$ were an indicator function, it  would restrict the $\mu_k$ to $|\mu_k  - c \lambda_j| \leq A$. Hence, the pair of cutoffs would restrict the joint spectrum to a rectangle. The smooth cutoffs $\psi, \rho$ should
be thought of as smoothings of such indicator functions.

By Fourier inversion, \eqref{RHOPSI} is given by
$$
	\rho(P- \lambda) \psi(Q_c) = \frac{1}{2\pi} \int_{\R}  \hat{\rho}(t)  e^{- it \lambda} e^{i t P} \psi(Q_c) \, dt
$$
and the next step is to elucidate the integrand. 
For simplicity of notation we denote $\zeta = (\zeta_1, \zeta_2) \in T^*(M \times H)$.  Since the canonical relation of $e^{itP}$ is
the graph of the bicharacteristic flow of $\sigma_P$ on $T^*(M \times H)$, the composition theorem for Fourier integral operators gives,

\begin{lemma} \label{WFsPIc} $e^{it P} \psi(Q_c): L^2(M \times H) \to L^2(\R \times M \times H)$ is a Fourier integral operator in the class $I^{-\frac{3}{4}}((\R \times M \times H) \times (M \times H), {\ccal^c_{\psi}}')$, with canonical relation
\begin{multline*}
\ccal^c_{\psi} := \{(t, \tau, G_M^{cs + t} \times G_H^{-s}(\zeta), \zeta) \in T^* \R \times  \Char(Q_c)  \times  \Char(Q_c): \\ 
s \in \supp(\hat{\psi}), \ \tau + |\zeta_M|_g = 0\}
\end{multline*}
In the natural parametrization of $\ccal_\psi^c$ by $(s,t,\zeta) \in \supp \hat \psi \times \R \times \Char(Q_c)$ given by
$$
(t, - |\zeta_M|_g, G_M^{cs + t} \times G_H^{-s}(\zeta), \zeta),
$$
the symbol of $e^{itP} \psi(Q_c)$ is
$(2 \pi)^{-\frac{1}{2}} \hat{\psi}(s)|ds|^{\half} \otimes |\dd t|^{\half} \otimes |\dd \mu_L|^{\half}$,
where $\mu_L$ is Liouville surface measure on $\Char(Q_c).$
\end{lemma}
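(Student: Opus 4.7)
My plan is to derive the lemma from Lemma \ref{WFQc} by composing $\psi(Q_c)$ with the half-wave kernel of $P = P_M \otimes I$, exploiting that $P$ and $Q_c$ commute and that $e^{itP}$ acts only on the $M$-factor of $M \times H$.

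First, I would record that the half-wave kernel $e^{itP_M}$ is a standard Fourier integral operator on $\R \times M \times M$ of order $-1/4$ with canonical relation $\Gamma_{P_M} = \{(t, \tau, G_M^t(y,\eta), y, \eta): \tau + |\eta|_g = 0\}$ and, in the $(t,y,\eta)$ parametrization, principal symbol $(2\pi)^{-1/2}|\dd t|^{1/2}\otimes |\dd \mu_L^{T^*M}|^{1/2}$. Tensoring with the identity operator on $H$, whose Schwartz kernel has canonical relation $\Delta_{T^*H}$, yields $e^{itP}$, viewed as a distribution on $\R \times (M\times H) \times (M \times H)$, as a FIO of order $-1/4$ with canonical relation
$$\Gamma_P = \{(t, \tau,\, G_M^t \times \id_{T^*H}(\zeta),\, \zeta): \tau + |\zeta_M|_g = 0\},$$
where $\zeta = (\zeta_M, \zeta_H) \in T^* M \times T^* H$.

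Next, I would compose $\Gamma_P$ with $\ical_\psi^c$ from Lemma \ref{WFQc}, which is parametrized by $(s, \zeta) \in \supp\hat\psi \times \Char(Q_c)$ sending the input $\zeta$ to the output $G_M^{cs}\times G_H^{-s}(\zeta)$. Because $\Gamma_P$ is independent of the $H$-covector and only advances the $M$-covector by $G_M^t$, while introducing $(t,\tau)$ as free variables modulo the non-degenerate conormal condition $\tau = -|\zeta_M|_g$, the composition $\Gamma_P \circ \ical_\psi^c$ is transverse, and its composite parametrization is
$$(s, t, \zeta) \longmapsto (t,\, -|\zeta_M|_g,\, G_M^{t+cs} \times G_H^{-s}(\zeta),\, \zeta),$$
which is precisely $\ccal_\psi^c$ in the statement of the lemma.

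Finally, the composition theorem for transversally composable FIOs delivers the order $-1/4 + (-1/2) = -3/4$ and identifies the principal symbol as the pullback-product of the individual principal symbols under the shared parametrization. This yields $(2\pi)^{-1/2}\hat\psi(s)\,|\dd s|^{1/2}\otimes |\dd t|^{1/2}\otimes |\dd \mu_L|^{1/2}$ on $\Char(Q_c)$, now transported by the augmented flow $G_M^{t+cs}\times G_H^{-s}$; Liouville measure is preserved under this transport by symplecticity. I expect the main bookkeeping obstacle to be signs and half-density conventions on the $\tau$-variable, since the geometric composition of canonical relations is essentially automatic from Lemma \ref{WFQc} given that $e^{itP}$ touches only the $M$-factor.
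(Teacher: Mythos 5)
Your proposal is correct and follows essentially the same route as the paper: compose the (spacetime) canonical relation of $e^{itP}$, which is the graph of a canonical transformation for each $t$, with $\ical_\psi^c$ from Lemma \ref{WFQc}, observe the composition is transversal, and add orders $-\frac{1}{4} + (-\frac{1}{2}) = -\frac{3}{4}$. Your write-up is in fact somewhat more explicit than the paper's (which mainly records the abstract transversality fact for graph-Lagrangian compositions and leaves the symbol bookkeeping implicit), but the underlying argument is the same.
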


\begin{proof}
We recall that if
$\chi: \dot T^*M \to \dot T^*M$ is a homogeneous canonical
transformation and $\Gamma_{\chi} \subset \dot T^*M \times \dot T^*M$ is its
graph, and if $\Lambda \subset T^*M \times T^*M$ is any
homogeneous Lagrangian submanifold with no elements of the form
$(0, \lambda_2)$, then $\Gamma_{\chi} \circ \Lambda$ is a
transversal composition with composed relation
$\{(\chi(\lambda_1), \lambda_2): (\lambda_1, \lambda_2) \in
\Lambda\}. $ The condition that $\lambda_1 \not= 0$ is so that
$\chi(\lambda_1)$ is well-defined.

It follows that $e^{it P} \circ \psi(Q_c)$ is a transversal composition, and therefore its order is the sum of the
order $\frac{-1}{4}$ of $e^{it P}$ \cite{DG75}  and the order $-\half$ of $\psi(Q_c)$ (Lemma \ref{WFQc}).
\end{proof}

\section{Reduction to $H$}

The Schwartz kernel of $e^{it P} \psi(Q_c)$ lies in $\dcal'((\R \times M \times H) \times (M \times H))$. To study sums of  squares of inner products, $ \left| \int_{H} \phi_j \overline{\psi_k}dV_H \right|^2$, we  need to restrict the Schwartz kernels to $(\R \times H \times H) \times (H \times H)$. To this end, we introduce
the restriction operator,
$$
	\gamma_H \otimes I: C(M \times H) \to C(H \times H).
$$
For instance (in the notation of \eqref{def phi_j,k}),
$$
	(\gamma_H \otimes I)  (\phi_{j,k}) \in C(H \times H), \;\; (\gamma_H \otimes I)   \phi_{j,k} (y_1, y_2) =(\gamma_H \phi_j) (y_1) \psi_k(y_2).
$$
We are interested in the operator with Schwartz kernel in $C((H \times H) \times (H \times H))$ given by,
\begin{equation}
	(\gamma_H \otimes I)  (\phi_{j,k})  \otimes [(\gamma_H \otimes I)  (\phi_{j,k}) ]^*.  \end{equation}
This is an operator from $L^2 (H \times H) \to L^2 (H\times H).$ 
We may construct this operator as the composition of the rank one projection 
$\phi_{j,k} \otimes \phi_{j,k}^* : L^2(M \times H) \to L^2(M \times H)$ with the restriction operator and its adjoint,
$$(\gamma_H \otimes I)^* : C(H \times H) \to \dcal'(M \times H). $$
Note the the adjoint is an extension as a kind of delta-function and does not preserve continuous functions; see e.g.
 \cite[Proposition 4.4.6]{D73} or \cite{TZ13} for
background. The relevant composition is, $(\gamma_H \otimes I) \circ (\phi_{j,k} \otimes \phi_{j,k}^*) \circ (\gamma_H \otimes I)^*$ given by the map
\begin{align*}
C(H \times H) &\mapsto C(H \times H), \\
K &\to \left( \int_H \int_H K(q_1, q_2) \phi_{j,k}(q_1,q_2) \, dV_H(q_1) \, dV_H(q_2) \right) \cdot (\gamma_H \otimes I) \phi_{j,k}.
\end{align*}
Extending to the operator $e^{itP}\psi(Q_c)$, we define
\[
(\gamma_H \otimes I) \circ e^{itP} \psi(Q_c)  \circ (\gamma_H \otimes I)^* \\ := \sum_{j,k} e^{it\lambda_j} \psi(\mu_k - c \lambda_j) ((\gamma_H \otimes I) \phi_{j,k} ) \otimes
((\gamma_H \otimes I) \phi_{j,k})^*.
\]


If $X \subset M$ is a submanifold, we refer to the  composition $\gamma_{X} F \gamma_{X}^*$   of a Fourier integral operator as its reduction to $L^2(X)$. Such operators are studied in many articles; we refer to \cite{TZ13, Si18} for background. We will need Sipailo's Theorem 3.1 \cite{Si18}, stated below for convenience. In the following, $\pi_X: T_X^*M \to T^* X$ is the natural restriction (or projection) of covectors to $T^*X$.

\begin{lemma}\cite[Theorem 3.1]{Si18} \label{REDUCTIONLEM} 
Let $F \in I^m (M \times M, \Lambda)$ be a Fourier integral operator of order $m$ associated to the canonical relation $\Lambda'
\subset \dot{T}^* M \times \dot{T^*} M$. If
\begin{itemize}
\item (i) \; $\Lambda \cap (T^*_X M \times T^*_X M)$ is a clean intersection, and
\item (ii)\; $\Lambda \cap N^*(X \times X) = \emptyset $,
\end{itemize}
then
$$
	\gamma_X F \gamma_X^* \in I^{m^*}(X \times X, \Lambda_X)
$$
where $\Lambda_X = (\pi_X \times \pi_X)( \Lambda \cap (T^*_X M \times T^*_X M))$, and where
$$
	m^*  = m - \dim X + \half \codim X + \half \dim (\Lambda \cap (T^*_X M \times T^*_X M)).
$$
\end{lemma}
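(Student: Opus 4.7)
The plan is to realize both $\gamma_X$ and $\gamma_X^*$ as Fourier integral operators and then assemble $\gamma_X F \gamma_X^*$ via the clean-composition calculus of \cite{DG75}. The restriction $\gamma_X: C^\infty(M) \to C^\infty(X)$ has Schwartz kernel equal to the delta distribution on the graph of the inclusion $\iota: X \hookrightarrow M$; in coordinates adapted to $X$ this is represented as an oscillatory integral with $\dim M$ phase variables on a base of dimension $\dim X + \dim M$, putting $\gamma_X$ in $I^{\frac{1}{4}\codim X}(X \times M, \Gamma')$ with canonical relation
$$
\Gamma = \{(x, \pi_X \xi;\, x, \xi) : x \in X,\ \xi \in \dot T^*_x M,\ \pi_X \xi \ne 0\}.
$$
The adjoint $\gamma_X^* \in I^{\frac{1}{4}\codim X}(M \times X, (\Gamma^{-1})')$ has the reversed canonical relation and the same order.

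A direct point-set computation gives $\Gamma \circ \Lambda \circ \Gamma^{-1} = (\pi_X \times \pi_X)(\Lambda \cap (T_X^*M \times T_X^*M)) = \Lambda_X$. Hypothesis (ii) guarantees that whenever $(x,\xi,x',\xi') \in \Lambda \cap (T_X^*M \times T_X^*M)$, neither $\pi_X\xi$ nor $\pi_X\xi'$ vanishes, so $\Lambda_X \subset \dot T^*X \times \dot T^*X$; without this the composition would leak into the zero section and $\gamma_X F \gamma_X^*$ would not remain in the FIO class. Hypothesis (i) then provides the cleanness needed to apply the triple clean-composition theorem, with total excess
$$
e = \dim\bigl(\Lambda \cap (T_X^*M \times T_X^*M)\bigr) - 2\dim X.
$$

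The order count is immediate from the clean-composition formula (order of composition equals the sum of orders plus $e/2$): using orders $\tfrac{1}{4}\codim X$ for each of $\gamma_X$, $\gamma_X^*$ and $m$ for $F$ yields
$$
m^* = m + \tfrac{1}{2}\codim X + \tfrac{e}{2} = m - \dim X + \tfrac{1}{2}\codim X + \tfrac{1}{2}\dim\bigl(\Lambda \cap (T_X^*M \times T_X^*M)\bigr),
$$
matching the statement. The principal symbol is obtained, per the clean-composition calculus, by pulling back $\sigma(F)$ to the clean intersection and then pushing it forward under $\pi_X \times \pi_X$, integrating out the $e$-dimensional excess fiber with the appropriate half-density factors.

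The main technical obstacle I anticipate is showing that the single clean-intersection condition in hypothesis (i) suffices to make the two successive compositions $\Gamma \circ \Lambda$ and then $(\Gamma \circ \Lambda) \circ \Gamma^{-1}$ individually clean. The bookkeeping is classical but delicate: one writes each step as a fiber product, identifies cleanness with a constant-rank condition on the tangent map landing in $T^*X \times T^*X$, and then uses the rigid product structure of the embedding relations $\Gamma$ and $\Gamma^{-1}$ to reduce the condition precisely to hypothesis (i). This linear-symplectic bookkeeping is the essence of Sipailo's argument and is where the work lies; once in place, the identification of $\Lambda_X$ and the order formula fall out of the standard calculus.
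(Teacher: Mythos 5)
Your proposal is correct and follows the same strategy the paper uses for this lemma and its extension Lemma \ref{REDUCTIONLEMEXT}: realize $\gamma_X$ and $\gamma_X^*$ as Fourier integral operators of order $\tfrac14\codim X$ with canonical relation the graph of $\pi_X$, apply the clean-composition calculus of \cite{DG75, HoIV}, and read off the order from the combined excess $e = \dim(\Lambda \cap (T_X^*M \times T_X^*M)) - 2\dim X$. The technical obstacle you flag at the end — that hypothesis (i) alone might not give cleanness of each of the two successive compositions $\Gamma \circ \Lambda$ and $(\Gamma\circ\Lambda)\circ\Gamma^{-1}$ separately — is sidestepped in the paper (and in Sipailo's argument) by treating $\gamma_X F \gamma_X^*$ as a single double-sided composition: the rigid product structure of the restriction relations lets one form $\hat C := \Lambda \cap (\dot T_X^*M \times \dot T_X^*M)$ directly, and the cleanness of hypothesis (i) is then exactly the cleanness of this one intersection; the combined excess is the fiber dimension of the projection $\hat C \to \Lambda_X$, which equals your $e$ because $\Lambda_X$ is Lagrangian of dimension $2\dim X$. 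With that reframing, your order count is exact and matches the paper's computation in the proof of Lemma \ref{REDUCTIONLEMEXT}.
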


\begin{remark}
We recall that  $X \cap Y$ is a {\it clean}  intersection of two submanifolds of $Z$ if $X \cap Y$ is a submanifold of $Z$ and
 $T_p(X\cap Y) = T_p X \cap T_p Y$ at all points $p \in X\cap Y$.
Failure of clean intersection can happen in two ways: (i) $ X\cap Y$ fails to be a submanifold of $Z$, or (ii) it is a submanifold of $Z$
but $T_p (X\cap Y) \not= T_p X \cap T_p Y$. Usually, (i) is easy to check, but (ii) can be hard to check when (i) holds.
\end{remark}

We also quote the result in the  simplest case, when  $\Lambda$ is the graph of a symplectic diffeomorphism $g$ of $\dot{T}^* M$.  Here and henceforth,   for any 
subset  $V \subset T^* M$, $\dot{V} = V \backslash \{0\}$ is $V$ minus its intersection with the zero section. The graph of
$g$ is denoted by ${\rm Graph}(g): = \{(g(\zeta), \zeta): \zeta \in \dot{T}^* M\}$.

\begin{corollary} \cite[Corollary 3.9]{Si18} \label{REDUCTIONCOR}  Let $\Phi$ be a Fourier integral operator  of order $m$ quantizing a canonical
transformation $g$. Assume that $g$ satisfies the conditions:
\begin{itemize}
\item (i) \; the intersection $\dot{T}_X^* M \cap g (\dot{T}^*_X M) $ is clean. \bigskip

\item (ii) \;  $\dot{N}^* X \cap g(\dot{N}^*(X)) = \emptyset$.

\end{itemize}

Then, $(\pi_X \times \pi_X)({\rm Graph} (g)  \cap (T^*_X M \times T^*_X M))  $ is a Lagrangian submanifold of $\dot{T}^*(X \times X)$ 
and $\gamma_X \Phi \gamma_X^*$ is a Fourier integral operator in the class $I^{m^*}(X \times X, \pi_X \times \pi_X)({\rm Graph} (g))$,
where $m^*$ is defined in Lemma \ref{REDUCTIONLEM}.
\end{corollary}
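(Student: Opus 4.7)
The plan is to derive Corollary \ref{REDUCTIONCOR} as a direct specialization of Lemma \ref{REDUCTIONLEM} to the case in which the canonical relation of $\Phi$ is the (twisted) graph of a symplectomorphism. Setting $\Lambda = \mathrm{Graph}(g) = \{(g(\zeta),\zeta) : \zeta \in \dot T^*M\}$, my task reduces to verifying that hypotheses (i)--(ii) of the lemma follow from hypotheses (i)--(ii) of the corollary, and then reading off the conclusion.

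For hypothesis (i), projection onto the second factor gives a bijection
\[
 \Lambda \cap (\dot T^*_X M \times \dot T^*_X M) \ \longleftrightarrow \ \{\zeta \in \dot T^*_X M : g(\zeta) \in \dot T^*_X M\},
\]
which is further in bijection, via $g$ itself, with $g(\dot T^*_X M) \cap \dot T^*_X M$. Since $g$ is a diffeomorphism of $T^*M$, submanifold structure and tangent-space compatibility transfer between all three of these sets; so the cleanliness assumption on $g(\dot T^*_X M) \cap \dot T^*_X M$ in $T^*M$ is equivalent to the cleanliness of $\Lambda \cap (T^*_X M \times T^*_X M)$ in $T^*M \times T^*M$ required by Lemma \ref{REDUCTIONLEM}(i). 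For hypothesis (ii), the natural identification $T^*(M \times M) \cong T^*M \times T^*M$ sends $N^*(X \times X)$ to $N^*X \times N^*X$, and the same argument gives
\[
 \Lambda \cap N^*(X \times X) \ \cong \ \{\zeta \in \dot N^*X : g(\zeta) \in \dot N^*X\},
\]
whose emptiness is precisely the corollary's hypothesis (ii) (the conventional sign flip on the second factor between $\Lambda$ and its twist $\Lambda'$ is immaterial here, since $\dot N^*X$ is invariant under $\eta\mapsto -\eta$).

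Having verified both hypotheses, Lemma \ref{REDUCTIONLEM} applies and delivers $\gamma_X \Phi \gamma_X^* \in I^{m^*}(X \times X, \Lambda_X)$ with $\Lambda_X = (\pi_X \times \pi_X)\bigl(\mathrm{Graph}(g) \cap (T^*_X M \times T^*_X M)\bigr)$ and order $m^*$ as in the lemma; the Lagrangian property of $\Lambda_X$ in $\dot T^*(X \times X)$ is built into membership in an FIO class and so is part of the lemma's conclusion. The main technical point in any careful execution is the tangent-level cleanliness: one must verify that at each $(g(\zeta),\zeta)$ the tangent space to $\Lambda \cap (T^*_X M \times T^*_X M)$ equals $T_{(g(\zeta),\zeta)}\Lambda \cap (T_{g(\zeta)} T^*_X M \oplus T_\zeta T^*_X M)$, and this is exactly where the symplectomorphism hypothesis earns its keep, since $Dg$ transports tangent vectors faithfully between the two copies and thereby converts the tangent condition on $g(\dot T^*_XM)\cap \dot T^*_XM$ in $T^*M$ into the required one for $\Lambda \cap (T^*_XM \times T^*_XM)$ in $T^*M\times T^*M$.
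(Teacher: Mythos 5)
Your derivation is correct. Note that the paper does not actually prove this corollary but simply quotes it from Sipailo (\cite[Corollary 3.9]{Si18}), just as it quotes Lemma \ref{REDUCTIONLEM} from the same source; your specialization of Lemma \ref{REDUCTIONLEM} to $\Lambda = \mathrm{Graph}(g)$ is the natural route one expects Sipailo to have used. The key transport argument is handled correctly: the embedding $\zeta \mapsto (g(\zeta),\zeta)$ identifies $\Lambda \cap (T^*_X M \times T^*_X M)$ with $T^*_X M \cap g^{-1}(T^*_X M)$, and since $g$ is a diffeomorphism the tangent-level cleanliness condition on $g(\dot T^*_X M) \cap \dot T^*_X M$ in $T^*M$ transfers exactly to the required one in $T^*M \times T^*M$; likewise $\Lambda \cap N^*(X \times X)$ lives in $\dot N^*X \times \dot N^*X$ (since $\Lambda$ avoids the zero section) and corresponds to $\dot N^*X \cap g(\dot N^*X)$. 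Your observation that the conventional twist $\Lambda \mapsto \Lambda'$ is harmless because $T^*_X M$ and $N^*X$ are invariant under $\eta \mapsto -\eta$ is exactly the right thing to note.
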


\subsection{Generalization to $F(t) :=  e^{it P} \psi(Q_c)$} \label{FtSECT}

In fact, we need to extend  Lemma \ref{REDUCTIONLEM}  to the case  where $F$ 
is replaced by  $  F(t) :=  e^{it P} \psi(Q_c)$  as   in Lemma \ref{WFsPIc}.  We state the result in the generality
of Lemma \ref{REDUCTIONLEM} where $F$ is replaced by $F(t) = e^{it P} F$, where (for each $t$)  $e^{it P}: L^2(M) \to L^2(M)$
is the unitary group generated by a first order elliptic pseudo-differential operator $P$.  Note that $F: \dcal'(M) \to \dcal'(\R \times M)$,
so the domain and range are not the same, and  Lemma \ref{REDUCTIONLEM} does not apply as stated, although it applies
for each fixed $t$.

As is well-known \cite{DG75}, $e^{it P}
\in I^{-\frac{1}{4}}( \R \times M \times M, \wt{\rm Graph}(g^t))$ where $\wt{\rm Graph}(g^t) = \{(t, \tau, g^t(\zeta), \zeta): \tau + \sigma_P(\zeta) = 0\}$
is the space-time graph of the flow.
Let $F \in I^m(M \times M, \Lambda)$ be a Fourier integral operator as in Lemma \ref{REDUCTIONLEM},  let $e^{it P}$ be as in the preceding paragraph and let $F(t) =e^{it P} F$. Assume:
$$
	(**)\;\; \; \wt{\rm Graph}(g^t) \circ \Lambda' \subset \dot{T}^* \R \times \dot{T}^* M \times \dot{T}^* M  \;\; \text{ is a clean composition}.
$$
Then by the composition theorem for Fourier integral operators, 
$F(t,x, y) )  \in I^{m - \frac{1}{4}} (\R \times M \times M, \wt \Lambda)$ is  a Fourier integral operator  associated to the canonical relation, $$\wt \Lambda : = \{(t, \tau, g^t(\zeta), \zeta'):   \tau  + p(\zeta) = 0, (\zeta, \zeta') \in \Lambda\}  \subset \dot{T}^* \R \times  \dot{T}^* M \times \dot{T^*} M. $$
We define the t-slice of $\wt \Lambda$ by,
$$\wt{\Lambda}_t :=  \{(g^t(\zeta), \zeta'):  (\zeta, \zeta') \in \Lambda\} \subset  \dot{T}^* M \times \dot{T^*} M. $$
Since $g^t$ is a diffeomorphism, it is evident that if $\Lambda$ is a manifold, then so is $\wt{\Lambda}_t$ for every $t$. It follows
that also  the spacetime graph $\wt \Lambda$  of $\wt{\Lambda}_t$ is a manifold and   that the composition is always clean.

We continue to use the notation $\gamma_X F(t) \gamma_X^*$ for the restriction in the $M$-variables, and $\pi_X$ as the projection
of a covector in $T^*M$ to $T^* X$, both with $t$ as a parameter.

\begin{lemma} \label{REDUCTIONLEMEXT} With the notation and assumptions of Lemma \ref{REDUCTIONLEM}, let $F(t) := e^{it P} F: C^\infty(M) \to C^\infty(\R \times M)$. 
Let $X \subset M$ be a submanifold and assume: 
 \begin{enumerate}
\item[(i)] For each $t$, $\wt \Lambda_t  \cap  \dot{T}^*_X M \times \dot{T}^*_X M$ is a clean intersection; 
\item[(ii)] For each $(\zeta, \zeta') \in \Lambda$, the curve $t \to g^t(\zeta) $ intersects $T^*_X M$ cleanly;
\item[(iii)] For each $t$, $\wt \Lambda_t \cap  \dot{N}^*(X \times X) = \emptyset $.
\end{enumerate}
Then,
 $\gamma_{\R \times X} F(t)\gamma_{ X}^* \in I^{m^*}(\R \times X \times X, \wt \Lambda_X)
$
where
\[
	\wt \Lambda_X = (I \times \pi_X \times \pi_X)(\wt \Lambda \cap (\dot{T}^* \R \times \dot{T} ^*_X M \times \dot{T}^*_X M)),
\]
and where
$$
	m^*  = \ord F(t) +  \frac{1}{2} \codim X + \half  \dim  \wt \Lambda \cap (T^*\R \times T^*_X M \times T^*_X M)  -  \half(2 \dim X  + 1) 
$$
Here, $\ord F(t)$ denotes the order of $F(t) $ as a Fourier integral kernel in $I^*(\R \times M \times M; \wt \Lambda)$.
\end{lemma}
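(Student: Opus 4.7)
The strategy is to recognize this as a spacetime analogue of Sipailo's Lemma~\ref{REDUCTIONLEM}, treating $F(t) = e^{itP}F$ as a single Fourier integral operator on $(\R \times M) \times M$ with canonical relation $\wt\Lambda$, rather than as a $t$-parametrized family of operators on $M \times M$. The hypothesis $(**)$, combined with $e^{itP} \in I^{-1/4}$ and the FIO composition theorem, yields $F(t) \in I^{m - 1/4}(\R \times M \times M; \wt\Lambda')$ as recorded in the text preceding the lemma. One then computes $\gamma_{\R\times X} F(t)\gamma_X^* = (I_\R \otimes \gamma_X) \circ F(t) \circ \gamma_X^*$ by composing $F(t)$ against two elementary restriction FIOs, and the task reduces to verifying cleanliness of the joint composition and tallying orders via H\"ormander's calculus.

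The cleanliness analysis splits neatly between the spatial and temporal directions. Condition $(iii)$ is the standard non-blocking hypothesis, ensuring that the wavefront set of $F(t)$ misses the conormal of $\R \times X \times X$ inside $\R \times M \times M$, so that the pullback of the Schwartz kernel to the submanifold is well defined as a distribution. Condition $(i)$ controls the spatial part of the intersection at each frozen value of $t$, reproducing exactly the hypothesis of Sipailo's theorem slice by slice. The genuinely new ingredient is condition $(ii)$: it ensures that for each $(\zeta,\zeta')\in\Lambda$ the curve $t\mapsto g^t(\zeta)$ meets $T^*_X M$ cleanly, which upgrades the slicewise cleanliness of $(i)$ to the full spacetime cleanliness of $\wt\Lambda \cap (T^*\R \times T^*_X M \times T^*_X M)$ as a submanifold whose tangent space is the intersection of tangent spaces. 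Equivalently, $(ii)$ supplies the extra $t$-direction of this intersection beyond its $t$-slice, and without it the pushforward defining $\wt\Lambda_X$ could fail to produce a manifold.

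Once cleanliness is established, the canonical relation and order of the reduced operator follow from the standard FIO composition calculus. The canonical relation is the image of the intersection under $I \times \pi_X \times \pi_X$, precisely as stated. The order $m^*$ is an accounting exercise: $\ord F(t)$ from the input, $\tfrac{1}{2}\codim X$ as the combined contribution of the two conormal restriction kernels, $\tfrac{1}{2}\dim \wt\Lambda \cap (T^*\R \times T^*_X M \times T^*_X M)$ as the clean composition excess, and $-\tfrac{1}{2}(2\dim X + 1)$ from the base dimension $\dim(\R\times X\times X) = 2\dim X + 1$ of the target. Relative to Sipailo's formula the net shift is $-\tfrac{1}{4}$, precisely absorbing the order $-\tfrac{1}{4}$ of the intertwining $e^{itP}$ and the extra $\R$-factor in the target. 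I expect the main technical obstacle to be the rigorous verification that $(ii)$, in concert with $(i)$, delivers cleanliness of the full spacetime intersection uniformly in $t$; once this joint cleanliness is in hand the remaining work is routine bookkeeping in the FIO composition calculus.
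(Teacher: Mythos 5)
Your overall architecture matches the paper's exactly: treat $F(t)$ as a single Fourier integral operator on $(\R\times M)\times M$ with canonical relation $\wt\Lambda$, compose it against the two restriction kernels, and reduce everything to verifying that the spacetime intersection $\wt\Lambda\cap(\dot T^*\R\times\dot T^*_XM\times\dot T^*_XM)$ is clean. You also correctly assign roles to the three hypotheses (slicewise cleanliness, orbit cleanliness, non-blocking), and the final tally of orders agrees with the paper's: $\ord\gamma_{\R\times X}+\ord\gamma_X^*=\tfrac12\codim X$ and the clean-composition excess contributes $\tfrac12\dim\bigl(\wt\Lambda\cap(T^*\R\times T^*_XM\times T^*_XM)\bigr)-\tfrac12(2\dim X+1)$, the latter being the dimension of the target Lagrangian. (Note that, in H\"ormander's calculus, the excess is that difference itself; your bookkeeping, which splits it into a "$\tfrac12\dim\wt\Lambda\cap(\cdots)$ excess" plus a separate "$-\tfrac12(2\dim X+1)$ base-dimension" term, is not the standard phrasing, although it lands on the same number.)

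However, there is a genuine gap: you explicitly defer the one step that carries the weight of the lemma, namely verifying that $(\textnormal{i})$ together with $(\textnormal{ii})$ delivers cleanliness of the full spacetime intersection. You write "I expect the main technical obstacle to be" this verification and then stop. The paper does this work. It introduces a local defining function $f_X:M\to\R^k$ for $X$, so that the intersection is the zero set of $\pi^*f_X\times\pi^*f_X$ on $\wt\Lambda$. Surjectivity of its differential, which gives the submanifold property, is already implied by $(\textnormal{i})$ alone: on tangent vectors to a fixed $t$-slice the differential is surjective by hypothesis, hence a fortiori when the $\partial/\partial t$ direction is added. The tangent-space condition of cleanliness, by contrast, is the part requiring $(\textnormal{ii})$: decomposing $T\wt\Lambda$ into the $\partial/\partial t$ direction plus tangents to $t$-slices, the only possible failure is a vector tangent to the orbit $t\mapsto g^t(\zeta)$ that is tangent to the intersection yet not in the intersection of the tangent spaces, and $(\textnormal{ii})$ is precisely the hypothesis that rules this out. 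Your statement that without $(\textnormal{ii})$ "the pushforward defining $\wt\Lambda_X$ could fail to produce a manifold" misallocates the two hypotheses; $(\textnormal{i})$ already secures the manifold structure, and $(\textnormal{ii})$ is needed for the tangency condition. Supplying this argument would complete the proof; as written it is correctly structured but not closed.
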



\begin{proof} (Sketch) Since the proof is almost the same as for Lemma \ref{REDUCTIONLEM} we only provide a brief sketch
of the proof, emphasizing the new aspects.

We claim first that the conditions (i) - (iii) of the Lemma are equivalent to:
\begin{enumerate}
\item[(i)'] \; $\wt \Lambda \cap (\dot{T}^*\R \times \dot{T}^*_X M \times \dot{T}^*_X M)$ is a clean intersection, and
\item[(ii)'] \; $\wt \Lambda \cap \dot{T}^* \R \times  \dot{N}^*(X \times X) = \emptyset $.
\end{enumerate}
It is obvious that (iii) and (ii)' are equivalent, so we only show that (i)' is equivalent to $(i) - (ii)$. In fact, it is clear that (i)' implies
(i)-(ii) since $dt \not=0 $ and all $t$ slices of the intersection are submanifolds if (i)' is a clean intersection and the tangent space
condition is satisfied.  The non-trivial statement is the  converse, that (i)-(ii) implies (i)'.  To prove it, let $f_X: M \to \R^k$ be a local defining
function for the codimension k  submanifold $X$, i.e. locally $X = \{f_X = 0\}$ and $df_X$ has full rank on $X$. For instance, one may use the normal
variables $y$ of local Fermi normal coordinates. Then, $\wt \Lambda \cap T^*\R \times T^*_X M \times T^*_X M$ is the set
of points in $\wt \Lambda$ where $\pi^*f_X \times \pi^* f_X  = 0$ (here, we use the notation $\pi^* f_X$ for its pullback to $T^*M$), and the intersection 
in (i)' is a submanifold if $\pi^* f_X \times \pi^* f_X $ (the pullback to $T^*\R \times T^*M \times T^*M$)  is non-singular on $\wt \Lambda$,
that is, if $\pi^* f_X \times \pi^* f_X: \wt \Lambda \to \R^k \times \R^k$ has a surjective differential at each point $(t, \tau, g^t(\zeta), \zeta')$.
Since $\pi^* f_X \times \pi^* f_X ((t, \tau, g^t(\zeta), \zeta')) = (f_X (\pi g^t(\zeta)), f_X(\zeta'))$
we can first calculate $D \pi^* f_X \times \pi^* f_X: T \wt \Lambda \to \R^k \times \R^k$ on tangent vectors
to curves in $\zeta$ and $\zeta'$ for fixed $t$ and then for for tangent vectors as $t$ varies. If $t$ is fixed, then the calculation is
the same as for the $t$-slice and by assumption (i) the derivative is already surjective. 
Afortiori, it is surjective if $t$ is allowed to vary. 

The more difficult condition is that the tangent space to the intersection equals the intersection of the tangent spaces. As mentioned
in the introduction, the tangent space of the intersection always contains the intersection of the tangent spaces but may possibly be larger.
If we decompose the tangent space into the $\frac{\partial}{\partial t}$ direction and the tangent vectors to the slices, we find that the
only condition not contained in (i) is that the the  tangent vectors to an  orbit $t \to g^t(\zeta)$ may be tangent to the intersection but
not in the intersection of the tangent spaces. Since this vector lies only in one component, the condition that there are no such
additional tangent vectors is precisely that the curve  $g^t(\zeta) $ intersects $T^*_X M$ cleanly.

 Once it is proved that the composition is clean, the  order can be calculated just using the standard calculus of Fourier integral operators under clean composition
\cite{HoIV, DG75, D73}.  As in \cite[(1.20)]{DG75} or \cite[Example, page 111]{D73},  restriction $\gamma_X$ to a submanifold is a Fourier integral operator
of order $\frac{1}{4} \codim X$ (after cutting away normal directions, which are irrelevant to our application). The adjoint $\gamma_X^*$
has the same order. 

In our application, the domain and range are different, creating an asymmetry in the order calculation. The domain (incoming  variables)  is $M$ and the range (outgoing variables)  is $\R \times M$. The left
 restriction $\gamma_{\R \times X} $ acts on the outgoing variables $\R \times M \to \R \times X$ and  is of order $\frac{1}{4} (\dim M - \dim X)$.
The right restriction  $\gamma_X^*$ acts on the incoming variables,  Hence the
 order of $\gamma_X^*$ is  $\frac{1}{4} (\dim M  - \dim X)$. 
 
 The clean  composition $\gamma_{\R \times X} F(t) \gamma_X^*$ has order
\[
	\ord  \gamma_{\R \times X} + \ord  F(t) + \ord  (\gamma_X^*) + \frac{e}{2} = \ord F(t) + \frac12 \codim X + \frac{e}{2}
\]
 where $e$ is the excess (see \eqref{EXCESS}). This is  the formula for the composition of two operators, but here we extend
 it to three operators by successively computing excesses and adding the two excesses.  To complete the proof, we need to show
 that 
 \begin{equation} \label{e/2} \frac{e}{2}  =\half  \dim  \wt \Lambda \cap (T^*\R \times T^*_X M \times T^*_X M)  -  \half(\dim X + \dim X + \dim \R) . 
 \end{equation}
 
 The excess of a general clean composition $A_1 \circ A_2$   of Fourier integral operators, with respective canonical relations $C_1 \subset \dot T^* X \times \dot T^* Y$, $C_2 \subset \dot T^*Y \times \dot T^*Z$,  is defined as follows (cf. \cite[Page 18]{HoIV}). The composition is defined
 in terms of the clean intersection $\hat{C} : = C_1 \times C_2 \cap T^*X \times {\rm Diag} (T^* Y \times T^* Y) \times T^* Z$.
Denote by $C$ the range of the map $\pi_{T^* X \times T^* Z} : \hat{C} \to T^*X \times T^* Z, \; (x, \xi; (y, \eta, y, \eta), z, \zeta) \mapsto (x, \xi, z, \zeta) $. Cleanliness
implies that the map $\hat{C} \to C $
has constant rank. The excess is the dimension of the fiber $C_{\gamma}$ over a point $\gamma \in C$.

The composition $\gamma_{\R \times X} F(t)\gamma_X^*$ involves three canonical relations, but in the case of restriction operators there is a
convenient way to  summarize the above two sided composition composition $\WF'(\gamma_{\R \times X} ) \circ \wt \Lambda \circ \WF'(\gamma_X^*) $ (where $\WF'(A)$ is the canonical relation of $A$). Namely, for the right composition, the submanifold $\hat{C}_R$
is $\wt{\Lambda} \cap T^* \R \times T^* M \times T^*_X M $.  It projects to  $\pi_{T^* \R \times T^* M \times T^*X} (\hat{C}_R)$.
Similarly, the  left  composition produces the submanifold $\hat{C}_L:= \wt{\Lambda} \cap T^*\R
\times T^*_X M \times T^*M$. The double composition produces the intersection  $\hat{C} = \hat{C}_L \cap \hat{C}_R = \wt \Lambda \cap (\dot{T}^*\R \times \dot{T}^*_X M \times \dot{T}^*_X M)$ and the projection  map
$$
	\Pi_{T^*\R \times T^*X \times T^*X}: \wt \Lambda \cap (\dot{T}^*\R \times \dot{T}^*_X M \times \dot{T}^*_X M) \to  C \subset T^* \R \times T^*X \times T^*X
$$
projects
the $T^*_X M $ components to $T^*X$.
 The combined excess of the double composition  is the dimension of the fiber of  this map over its image.  We know that
 the map has constant rank,  since $C$ is a Lagrangian submanifold,  $\dim C = (2 \dim X +1)$ and therefore the dimension of the fiber is 
 $$\dim \wt \Lambda \cap (\dot{T}^*\R \times \dot{T}^*_X M \times \dot{T}^*_X M)  - (2 \dim X + 1),$$
 agreeing with \eqref{e/2}. 
\end{proof}

\subsection{Geometry of submanifolds, transversality to the geodesic flow and cleanliness} \label{HSECT}

 We further review the geometry of unclean intersections in restriction problems from \cite{TZ13}. In that article, $H$ was
 assumed to be a hypersurface, whereas here   $H \subset M$ can  be any submanifold. We briefly generalize the statements
 accordingly. In the following, we assume that  $H$ is locally defined in an open set $U$  by $\{f_H = 0\}$ with $df_H \not= 0$ on 
 $H$ and with $f_H  : U \to \R^k$. Then $(f_H, d f_H) : T M \to T \R^k$ is a local defining function of $T H$.
 A natural choice is to use Fermi-normal coordinates along $H$, the coordinates defined by $\exp^{\perp}: N H \to M $. We let
 $(s_1, \dots, s_d)$ be a choice of coordinates on $H$ and let $f_H = (y_1, \dots, y_{n-d})$ be normal coordinates. We also
 let $(\sigma_1, \dots, \sigma_d, \eta_1, \dots, \eta_{n-d})$  be the symplectically dual coordinates on $T^*M$.  The following 
 Lemma explains why geodesics tangent to $H$ may cause a lack of cleanliness. 
 
\begin{lemma} \label{FAILLEM} Let $H \subset M$ be a submanifold. Then,  $S^* H$ is the set of points of $S^*_H M$ where $S^*_H M$
fails to be transverse to the geodesic flow $G^t$, i.e. where the Hamilton vector
field $H_{p_M}$ of $p_M = |\xi|_M$ is tangent to $S^*_H M$. 
\end{lemma}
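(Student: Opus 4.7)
The plan is to work in Fermi normal coordinates $(s,y) = (s_1,\dots,s_d,y_1,\dots,y_{n-d})$ along $H$, with $H = \{y = 0\}$, so that the metric at $y=0$ is block-diagonal with Euclidean normal block. Let $(\sigma,\eta)$ denote the symplectically dual fiber coordinates on $T^*M$. Since $S^*_H M$ is cut out of $S^*M$ by the independent functions $y_1,\dots,y_{n-d}$, the Hamilton vector field $H_{p_M}$ is tangent to $S^*_H M$ at a point $(q,\xi) \in S^*_H M$ if and only if $H_{p_M}(y_a) = 0$ for every $a = 1,\dots,n-d$.

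The key computation is then short: $H_{p_M}(y_a) = \partial p_M/\partial \eta_a$, and at $y=0$ the metric gives $p_M^2 = g^{ij}(s,0)\,\sigma_i\sigma_j + \sum_a \eta_a^2$, whence
\[
  H_{p_M}(y_a)\big|_{y=0} \;=\; \frac{\eta_a}{p_M}.
\]
Therefore tangency of $H_{p_M}$ to $S^*_H M$ at $(q,\xi)$ is equivalent to the vanishing of all normal cotangent components $\eta_a$, which is precisely the condition $\xi \in T^*_q H$, i.e.\ $(q,\xi) \in S^*H$.

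For an invariant perspective (and as a sanity check), note that the projection $\pi_*H_{p_M}(q,\xi)$ to $TM$ equals the unit vector $\xi^\sharp$ dual to $\xi$ under $g$. Since $S^*_H M = \pi^{-1}(H) \cap S^*M$ and $\pi$ is a submersion, $H_{p_M}$ is tangent to $S^*_H M$ iff its pushforward $\xi^\sharp$ is tangent to $H$, which happens iff $\xi$ annihilates $N_q H$, i.e.\ $(q,\xi) \in S^*H$.

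I do not anticipate any serious obstacle; the only points requiring a bit of care are the independence of the defining functions $y_a$ on $S^*_H M$ (immediate from the coordinate construction), the fact that $p_M \neq 0$ on $S^*M$ so that the ratio $\eta_a/p_M$ is well-defined, and confirming that the block-diagonal form of the metric at $y=0$ correctly identifies the set $\{\eta = 0\} \cap S^*_H M$ with $S^*H$ via the musical isomorphism.
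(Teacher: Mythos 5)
Your proof is correct and follows essentially the same route as the paper. The paper phrases it invariantly by noting that $H_{p_M}$ is the horizontal lift of $\xi^\sharp$ (so $df_H(H_{p_M}) = df_H(\xi^\sharp)$ for a base-pulled-back defining function $f_H$), which is exactly your invariant remark that $\pi_* H_{p_M} = \xi^\sharp$; your Fermi-coordinate computation just makes the same identification explicit.
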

This is proved in \cite{TZ13} for hypersurfaces. 

\begin{proof} The generator $H_{p_M}$ of the geodesic flow of $M$  is the vector field on $S^*M$ obtained by 
horizontally lifting  $\eta \mapsto \eta^h$ a covector $(q, \eta) \in S^*M $ to $T_{(q, \eta)} S^*M$  with respect to
the Riemannian connection on $S^* M$; here,  we freely identify
covectors and vectors by the metric. Lack of transversality  $G^t$ and $H$ occurs
when $\eta^h  \in T_{(q, \eta)} (S^*_H M)$. The latter
is the kernel of $d f_H$. Since $f_H$ is a pullback to $S^*M$,  $d f_H (\eta^h) = d f_H  (\eta)= 0 $
if and only if $\eta \in T H$. 

When $H$ is totally geodesic, the orbits of $G^t$ starting with initial data  $(s, \xi) \in S^*H$ remain in $S^*H$, proving the last
statement.
\end{proof}

We now consider an example  of unclean bi-angle sets  $\gcal_c$, resp. $\gcal_c^0$,  in the sense of Definition \ref{CLEAN}  of Section \ref{CLEANJF}, which arise when $c < 1$
and the $M$-geodesic of the $(c, s, t)$  bi-angles have transverse intersection with $H$,  i.e. examples
where the solution set of  $G_H^{ - s} \circ \pi_H \circ G_M^{cs + t} (q, \xi) = \pi_H (q, \xi) $ is unclean. In these examples $t \not=0$.

Suppose that  $\gamma_0$ is a closed geodesic of $\Ss^d$, which we  envision  as a meridian through the poles.  Let $SO(2)$ be the one-parameter subgroup of rotations
fixing $\gamma_0$. 
Let $p \in \gamma_0$.
Then for any $\xi \in S^*_p S^2$, $G_{S^2} ^{\pi} (p, \xi) \in S^*_H S^2$ and $\exp_p (\pi \xi) = - p$.

We then define $H$ to be the bumped geodesic in which we  add a small `bump' on some proper subinterval of $\gamma_0$. For instance, we deform $\gamma_0$ by a nearly rectangular bump centered along the equator which is $\epsilon$ in length along $\gamma_0$ and comes away from the geodesic $\gamma_0$ by $\epsilon^2$. 
Then we smooth it out near the corners.

We then consider  $c$-bi-angles
i.e. solutions of $G_H^{ - s} \circ \pi_H \circ G_M^{cs + t} (q, \xi) = \pi_H (q, \xi) $ for some $(s,t)$ and some $(q, \xi)$ with $q \in H$. 

First, assume that $q$ is not on the bump, i.e.  $q \in \gamma_0$,  and let $\xi \in S^c_q H$.  If $ q$ is sufficiently far from the equator and
$\epsilon$ is small enough, then the $M$- geodesic $\gamma_{q, \xi}(\sigma) = G^{\sigma} (q, \xi)$ does not intersect the bump. It will produce a c-bi-angle in which
the geodesic $\gamma_{q, \xi}(\sigma)$ hits $H$ at $\sigma = \pi$.
Let $s$ be the arc-length on $H$ between the two intersection points and define $t$ by $\pi = cs + t$.

We now move the initial data $(q, \xi)$ under $g_{\theta} \in SO(2)$. Let $d(g_{\theta} q)$ be the distance from $g_{\theta} q \in \gamma_0$ to the equator, where the bump lies. 
 As $d(g_{\theta} q) \to 0$, the geodesic $\gamma_{g_{\theta}(q, \xi)}$ first intersects  the bump when  $\theta= \theta_0$. The angle of intersection
 of  $\gamma_{g_{\theta}(q, \xi)}$ and $H$ ceases to be constant at $\theta_0$ and then depends on $\theta$. In particular, the angle ceases to
 be the one corresponding to $c$. Thus, the $c$-bi-angle set has a boundary and therefore the solution set is not even a manifold. For $\theta \geq \theta_0$, a c-bi-angle with footpoint on $H$ no longer
 lies in the 1-parameter family obtained from $g_{\theta} q$ where $q \in \gamma_0$.

 A related example is to put two bumps into $\gamma_0$. An extreme case is that the bumps touch at a point $q_0$ where they are tangent to 
 to $\gamma_0$ to high order. If $c, \epsilon$ are chosen so that $\gamma_{q_0, \xi}(t)$ does not intersect the bumps, then one can find
 $(s, t)$ to have a $c$-bi-angle. The bi-angle cannot be deformed preserving $c$, i.e it is an isolated bi-angle.



\subsection{Microlocal cutoffs}

The hypotheses of Lemma \ref{REDUCTIONLEM}  and Lemma \ref{REDUCTIONLEMEXT} will not be satisfied in all the
cases for which  we wish to prove Theorem \ref{main 3} and Theorem \ref{main 4}. As discussed in Section \ref{SHARPSECT}, 
non-clean intersections arise due to tangential intersections of geodesics with $H$. This problem is discussed at length in
\cite{TZ13}, to which we refer for much of the background. 
As  in \cite{TZ13}, we introduce some cutoff operators  supported away from glancing and conormal directions to $H.$
For fixed $\epsilon >0,$  let $\chi^{(tan)}_{\epsilon}(x, D) = Op(\chi_{\epsilon}^{(tan)}) \in \Psi^0(M),$ with 
homogeneous symbol $\chi^{(tan)}_{\epsilon}(x,\xi)$ supported in an $\epsilon$-aperture conic neighbourhood of $T^*H \subset T^*M$ with $\chi^{(tan)}_{\epsilon} \equiv 1$ in an $\frac{\epsilon}{2}$-aperture subcone. The second cutoff operator $\chi^{(n)}_{\epsilon}(x,D) = Op(\chi_{\epsilon}^{(n)}) \in Op(S^0_{cl}(T^*M))$ has its homogeneous symbol  $\chi^{(n)}_{\epsilon}(x,\xi)$ supported in an $\epsilon$-conic neighbourhood of $N^*H$ with $\chi^{(n)}_{\epsilon} \equiv 1$ in an $\frac{\epsilon}{2}$ subcone. Both $\chi_{\epsilon}^{(tan)}$ and $\chi_{\epsilon}^{(n)}$ have spatial support in the tube $\tcal_{\epsilon}(H)$, the tube of radius $\epsilon $ around $H$ (see \cite[(5.1) and (5.2)]{TZ13} ). 
To simplify notation, define the total cutoff operator
\begin{equation} \label{CUTOFF}
\chi_{\epsilon}(x,D) := \chi^{(tan)}_{\epsilon}(x,D) + \chi^{(n)}_{\epsilon}(x,D).
\end{equation}
We put, 
$$B_{\epsilon}(x, D) = I - \chi_{\epsilon} (x, D).$$

We use these cutoff operators to ensure that the relevant cleanliness conditions are satisfied. However, our ultimate goal is to 
prove singularity results for the traces \eqref{SpsiDEF}, which involve diagonal pullbacks and pushfowards that will erase some
of the uncleanliness problems, and make it possible to remove the cutoffs. Moreover, the cutoff is unnecessary when $H$ is  totally
geodesic.

\subsection{Application to fuzzy ladder propagators}

We now apply Lemma \ref{REDUCTIONLEMEXT} to the operator $e^{it P} \psi(Q_c)$ of Lemma \ref{WFsPIc},
 where  $M$ in the lemma is replaced  with $  M \times H$, and where $X =   H \times H$, so $\dim X = 2 \dim H, \half \codim X = \half \codim H$.  We use the following notation:   $\zeta = (\zeta_M, \zeta_H) \in T^*_H M \times T^*H$ and $\pi_H(\zeta) = (\pi_H(\zeta_M), \zeta_H)$.
 
 Since tangential  intersections of geodesics with $H$ and conormal vectors to $H$ apriori cause problems, we will apply 
 Lemma \ref{REDUCTIONLEMEXT} to the reduction of the  cutoff operator, 
 \begin{equation} \label{CUTOFFUtPi} (B_{\epsilon}(x,D) \otimes I) e^{it P} \psi(Q_c): L^2(M \times H) \to L^2(\R \times M \times H). 
\end{equation}
The tensor product notation  means, as usual, that the  cutoff is applied only in the $M$ variables (we omit  tensor product with $I$ for the time variables). The cutoff has the effect of cutting
down the canonical relation $\ccal^c_{\psi}$  of $e^{it P} \psi(Q_c)$ in Lemma \ref{WFsPIc} to,
\begin{equation} \label{CUTOFFCR} 
\ccal^c_{\psi, \epsilon} := \{(t, \tau, \ G_M^{cs + t} \times G_H^{-s}(\zeta), \zeta) \in \ccal^c_{\psi} : (1 - \chi_{\epsilon}) (G_M^{cs + t}(\zeta_M))) \not= 0. \}
\end{equation}
Here, we use that if $F$ is any Fourier integral operator with canonical relation $C_F = \{(x, \xi, y, \eta)\} \subset T^* X \times T^*Y$
and symbol $\sigma_F$, 
and if $a(x, D)$ is a pseudo-differential operator, then the symbol of the left composition $a(x, D) F$ at $(x, \xi, y, \eta)$
is $a(x, \xi) \sigma_F(x, \xi, y, \eta). $

\begin{lemma} \label{LRLEMLEM} 
Let $\ccal_{\psi, \epsilon}^c$ be the canonical relation of \eqref{CUTOFFCR}. Then, for $c < 1$, the intersection
\begin{equation}\label{INTER}
	\ccal_{\psi, \epsilon}^c \cap (T^*\R \times T^*_{H } M \times T^*H \times T^*_H M \times T^*H )
\end{equation}
is always clean.
\end{lemma}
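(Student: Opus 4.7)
The plan is to verify cleanness directly by parametrizing $\ccal_{\psi,\epsilon}^c$ and applying the constant rank theorem to the defining equations of the intersection. Parametrize $\ccal_{\psi,\epsilon}^c$ by the smooth immersion
\[
\Phi(s, t, \zeta_M, \zeta_H) = (t, -|\zeta_M|_g, G_M^{cs+t}\zeta_M, G_H^{-s}\zeta_H, \zeta_M, \zeta_H),
\]
defined on $(s,t,\zeta_M,\zeta_H) \in \supp\hat\psi \times \R \times T^*M \times T^*H$ subject to the characteristic variety $c|\zeta_M|_g = |\zeta_H|_{g_H}$ and the cutoff condition $(1-\chi_\epsilon)(G_M^{cs+t}\zeta_M) \neq 0$. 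The intersection (INTER) pulls back to the joint zero set of $y_1(\zeta_M) := \pi_N \zeta_M = 0$ and $y_2(s,t,\zeta_M) := \pi_N(G_M^{cs+t}\zeta_M) = 0$, both valued in $\R^{n-d}$, where $\pi_N$ denotes the normal-footpoint projection. By the constant rank theorem, it suffices to show that $d(y_1, y_2)$ has locally constant rank along the zero set.

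Next I analyze these differentials. Since the characteristic variety couples only $|\zeta_M|$ with $|\zeta_H|$ and leaves the footpoint of $\zeta_M$ free, $dy_1$ always has maximal rank $n-d$. For $dy_2$ restricted to $\{y_1 = 0\}$, two sources contribute: variation in $t$ (or equivalently $s$) produces $\pi_N\dot\gamma(\zeta'_M)$, which is non-zero by Lemma \ref{FAILLEM} combined with the cutoff $\chi_\epsilon^{(\tan)}$ that excises $\zeta'_M \in T^*H$; and variation of $\zeta_M$ within $T^*_H M$ produces $\pi_N \circ dG_M^{cs+t}$ applied to $T_{\zeta_M}(T^*_H M)$. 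The hypothesis $c < 1$ enters to control the input side: the strict inequality $|\zeta_H| < |\zeta_M|$ on $\Char(Q_c)$ prevents the collapse where the $M$-geodesic direction would align with an $H$-geodesic direction at the input (which would force $c = 1$), and together with $\chi_\epsilon^{(n)}$ rules out conormal incidence that could cause $dG_M^{cs+t}$ to map $T_{\zeta_M}(T^*_H M)$ unfavorably.

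The main obstacle will be showing local constancy of the rank of $dy_2$ across the special components discussed in Section \ref{CLUSTERSECT}, where resonances between $G_M^t$ and $G_H^s$ may produce non-maximal strata. Within each connected component of the intersection, however, small deformations of $(s,t,\zeta)$ preserve the topological type of the underlying return configuration of the $M$-geodesic to $T^*_H M$, and hence preserve the rank of $d(y_1,y_2)$; the cutoff $B_\epsilon = I - \chi_\epsilon$ is designed precisely to excise the boundary strata (tangential and conormal limits to $H$) at which this rank could drop discontinuously. With constant rank on each component, the implicit function theorem yields that the intersection is a submanifold and that the tangent space identity $T_p(A \cap B) = T_p A \cap T_p B$ holds, completing the verification of cleanness.
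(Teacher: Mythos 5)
Your setup is correct, and you are right that establishing constant rank of $d(y_1,y_2)$ along its zero set would deliver cleanness (the intersection is then a submanifold and, since $\ker d(y_1,y_2) = d\Phi^{-1}(T_p B)$ for $B = T^*\R\times(T^*_H M\times T^*H)^2$, the tangent-space identity follows automatically). But the proof breaks precisely at the step you yourself flag as the main obstacle: you never actually establish constant rank. The justification offered --- that small deformations of $(s,t,\zeta)$ preserve the ``topological type of the underlying return configuration'' and hence the rank --- is a restatement of the claim, not an argument. The rank of a smooth differential can drop on a positive-codimension subset of the zero set while the qualitative behavior of the $M$-geodesic and its return points to $H$ is unchanged; ``same configuration'' does not force ``same rank.'' You also assert, without justification, that $B_\epsilon = I - \chi_\epsilon$ excises exactly the locus where rank could drop; no criterion characterizing that locus is supplied.

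The paper's proof supplies precisely the missing criterion and avoids the rank-constancy issue. Rather than working directly in spacetime, it invokes Lemma \ref{REDUCTIONLEMEXT}, whose proof shows that cleanness of the spacetime intersection \eqref{INTER} is \emph{equivalent} to two conditions: (i) cleanness of each fixed-$t$ slice $\ccal^c_{\psi,\epsilon}|_t \cap (T^*_H M\times T^*H)^2$, and (ii) clean intersection of the curves $t\mapsto G_M^{cs+t}(\zeta_M)$ with $T^*_H M$. Both are then settled by Lemma \ref{FAILLEM}: $H_{p_M}$ is transverse to $S^*_H M$ exactly off $S^*H$, and $\chi^{(\tan)}_\epsilon$ excises a conic neighborhood of $S^*H$ from the terminal covector $G_M^{cs+t}(\zeta_M)$, while $0<c<1$ keeps the initial covector $\zeta_M\in S^c_H M$ off $S^*H$ automatically. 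To repair your argument you would need either to import this slice-by-slice reduction, or to prove \emph{transversality} (full rank, not merely constant rank) of $d(y_1,y_2)$ at every point, using Lemma \ref{FAILLEM} on both ends of the orbit; the ``topological type'' remark accomplishes neither.
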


\begin{proof}

 Denote
the codimension of $H$ by $\codim H = k$.
 By Lemma \ref{REDUCTIONLEMEXT}, the cleanliness of \eqref{INTER}  holds as long as it holds for (i)  fixed t-slices, and (ii) 
for $t$-curves with fixed $\zeta$.

 In the case of fixed $t$ slices, we claim that the intersection is clean if and only if
\begin{equation} \label{GCAP}
	G^{t + c s} _M(S^*_H M) \cap S^*_H M
\end{equation}
is a clean intersection for all $s \in \supp \hat \psi$ and $\zeta_M$ in the support of the above cutoff. Indeed, the intersection \eqref{INTER} at time $t$ is parametrized by the points $(\zeta, s) \in S^*_H M \times \supp \hat \psi$  
such that
$$
	G_M^{cs + t} \times G_H^{-s} (\zeta)  \in S ^*_H M \times B^*H.
$$
	 Since cleanliness in the $ \zeta_H$ component is automatic, 
	 the cleanliness of this intersection is equivalent  to cleanliness of the intersection    \eqref{GCAP}. By Lemma \ref{FAILLEM}, the intersection is necessarily clean
	 unless there exists $\zeta_M \in S^*_H M$ such that $G^{t + cs}_M(\zeta_M)  \in S^* H$.   However, in this case $(1- \chi_{\epsilon}) (G^{t + cs}_M(\zeta_M)) = 0$ and the point is not in the
	 canonical relation.

	  In addition, we need to check that the
	 orbits $t \to G_M^{cs + t}(\zeta)$ intersect $S_H^c M$ cleanly. But this case is again covered by Lemma \ref{FAILLEM} for 
	 the same reasons as above.
\end{proof}

\begin{proposition} \label{LRLEMPROP} Let 
\begin{multline*}
\Gamma^c_{\psi, \epsilon} := (\pi_{\R \times H \times H} \times \pi_{H \times H}) \ccal_{\psi, \epsilon}^c \cap (T^*\R \times T^*_{H } M \times T^*H \times T^*_H M \times T^*H ) \\
= \{(t, \tau, \pi_{H \times H} \zeta, \pi_{H \times H} (G^{c s +t}_M \times G^{-s }_{H}) (\zeta)) : |\zeta_M|_g + \tau = 0,\\
\zeta \in \Char Q_c \cap T^*_{H \times H} (M \times H), \ G^{c s +t}_M(\zeta_M) \in T^*_H M \\
 (1- \chi_{\epsilon}) (G^{c s +t}_M(\zeta_M)) \not=0,  \ s \in \supp \hat \psi \} \\
\subset T^*\R \times (T^*H \times T^*H \times T^*H \times T^*H).
\end{multline*}
For $0<c< 1$,
$\Gamma^c_{\psi,\epsilon}$
is a Lagrangian submanifold and 
the `reduced' Fourier integral operator 
\begin{equation} \label{LRLEM}
	\gamma_{\R \times H \times H} \circ (B_{\epsilon}(x,D) \otimes I) e^{it P} \psi(Q_c) \circ \gamma_{H \times H}^*
\end{equation}
belongs to  the class 
$$
	I^{\rho(m, d)} (\R \times (H \times H) \times (H\times H), \Gamma^c_{\psi,\epsilon}),
$$
with
$$\rho(m, d) = \ord  e^{it P} \psi(Q_c)  + \half (n-d) + 2 d +\half -\half  (4 d+ 1) = \ord  e^{it P} \psi(Q_c)  + \half (n-d) . $$
\end{proposition}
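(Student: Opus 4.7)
The plan is to derive this from Lemma \ref{REDUCTIONLEMEXT}, applied with $M \times H$ in the role of $M$, the submanifold $H \times H \subset M \times H$ in the role of $X$, and $F(t) := (B_\epsilon(x,D) \otimes I)\, e^{itP}\psi(Q_c)$ in the role of $F(t)$. By Lemma \ref{WFsPIc} together with the fact that $B_\epsilon(x,D)$ is a zeroth-order pseudodifferential operator, $F(t)$ is a Fourier integral operator associated to the canonical relation $\ccal^c_{\psi,\epsilon}$ of \eqref{CUTOFFCR} and has the same order $\ord e^{itP}\psi(Q_c)$.

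The first step is to verify the three hypotheses of Lemma \ref{REDUCTIONLEMEXT}. Hypothesis (i), cleanliness of the $t$-slices of $\ccal^c_{\psi,\epsilon}$ against $T^*_{H\times H}(M\times H) \times T^*_{H\times H}(M\times H)$, is exactly Lemma \ref{LRLEMLEM}. Hypothesis (ii), cleanliness of each orbit $\sigma \mapsto G^{cs+\sigma}_M(\zeta_M)$ against $T^*_HM$, follows from the same transversality criterion of Lemma \ref{FAILLEM}: non-transversality at $(q,\xi)$ requires $\xi \in T^*H$, but for $0<c<1$ the characteristic equation $c|\zeta_M|=|\zeta_H|$ keeps $\zeta_M$ off $T^*H$ at the initial time, while the cutoff $B_\epsilon$ keeps $G^{cs+t}_M(\zeta_M)$ off $T^*H$ at the return. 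For hypothesis (iii), note that
\[
 N^*(H\times H) = N^*H \oplus 0_H \subset T^*(M\times H),
\]
so any point of $N^*((H\times H)\times(H\times H))$ has $\zeta_H = \zeta'_H = 0$; but $\zeta \in \Char Q_c$ and $c>0$ then force $\zeta_M = 0$, contradicting $\zeta \in \dot T^*(M \times H)$.

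Next I would identify $\wt \Lambda_X$ as the Lagrangian $\Gamma^c_{\psi,\epsilon}$ in the statement: the map $I \times \pi_X \times \pi_X$ leaves the $T^*H$ components alone (they are already in $T^*H$, and $G^{-s}_H$ preserves them) and projects the two $T^*_HM$ components orthogonally, yielding precisely the parametrization written in the proposition. The order is computed from Lemma \ref{REDUCTIONLEMEXT}. Parametrizing $\ccal^c_{\psi,\epsilon} \cap (T^*\R \times T^*_{H\times H}(M\times H) \times T^*_{H\times H}(M\times H))$ by $(s,t,\zeta_M,\zeta_H)$ with $\zeta_M \in T^*_H M$ (dimension $n+d$), $\zeta_H \in T^*H$ (dimension $2d$), subject to $c|\zeta_M|=|\zeta_H|$ (codimension $1$) and $G^{cs+t}_M(\zeta_M) \in T^*_HM$ (codimension $n-d$), gives intersection dimension
\[
 (n+d) + 2d + 1 + 1 - 1 - (n-d) = 4d+1.
\]
Substituting $\codim(H\times H)=n-d$, $\dim(H\times H)=2d$, the order formula of Lemma \ref{REDUCTIONLEMEXT} collapses via $2d + \tfrac12 - \tfrac12(4d+1) = 0$ to $\ord e^{itP}\psi(Q_c) + \tfrac12(n-d)$, as stated.

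The main obstacle is the clean-intersection condition in hypothesis (i): it is not enough that the intersection be a manifold, one needs the tangent-space equality at every point, and this equality is precisely what fails at glancing returns. This is where the hypothesis $0<c<1$ and the microlocal cutoff $B_\epsilon$ do essential work; at the interface $c=1$, or without the cutoff, the argument breaks down and a different analysis (the subject of \cite{WXZ+}) is required.
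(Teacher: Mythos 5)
Your proof follows the same route as the paper: invoke Lemma \ref{REDUCTIONLEMEXT} with $X = H\times H \subset M\times H$, cite Lemma \ref{LRLEMLEM} for the cleanliness of the $t$-slices and orbits, and compute the intersection dimension $4d+1$ to get the order; you also explicitly verify the conormal-disjointness condition (iii), which the paper leaves implicit. One small misstatement: the characteristic relation $c|\zeta_M|_g = |\zeta_H|_{g_H}$ constrains only the norms, not the angle of $\zeta_M$ with $T^*H$, so it does not by itself keep $\zeta_M$ off $T^*H$ at the initial time; the cleanliness of the $t$-orbits rests entirely on the cutoff $B_\epsilon$ removing tangential returns, which you also invoke, so the conclusion is unaffected.
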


The principal symbol of this kind of composition is calculated in \cite{TZ13} using symbol calculus and in \cite{Si18} using oscillatory
integrals. We postpone the calculation until the end, since it involves two different restrictions and a pushforward, which can be done in one step rather than in three steps. The ultimate symbol is obtained by a sequence of canonical  pushforward and pullback
operations.


\begin{proof}

By Lemma \ref{LRLEMLEM},  \eqref{INTER} is a clean intersection and it follows from Lemma \ref{REDUCTIONLEMEXT} that 
$ \Gamma^c_{\psi, \epsilon} 
$
is a homogeneous Lagrangian submanifold of $T^* \R \times T^*H \times T^*H \times T^*H \times T^*H$ and that \eqref{LRLEM} is a Fourier integral operator with 
canonical relation $\Gamma_{\psi, \epsilon}^c.$

To complete the proof, we compute the order of \eqref{LRLEM} when $\dim M = n$ and $\dim H = d$. We have, $ \dim (\R \times H \times H ) = 2 d + 1$ and $\half 
\codim (\R \times H \times H  \subset \R \times M \times H) = \half  \codim H = \half (n-d)$.  The main problem is to calculate the dimension,
\begin{equation}
	\label{DIM} D^c(n,d)  := \dim (\ccal_{\psi, \epsilon}^c \cap (T^*\R \times T^*_H M \times T^*H \times T^*_H M \times T^*H)),
\end{equation}
and how it depends on $c$ and on whether or not $H$ is totally geodesic.  Note that  $\wt \Lambda = \ccal_{\psi, \epsilon}^c$  in the notation of Lemma \ref{REDUCTIONLEMEXT}, where
\begin{multline*}
\ccal^c_{\psi, \epsilon} := \{(t, \tau, G_M^{cs + t} \times G_H^{-s}(\zeta), \zeta) \in T^* \R \times  \Char(Q_c)  \times  \Char(Q_c): \\  s \in 
\supp(\hat{\psi}), \ \tau + |\zeta_1|_g = 0\}.
\end{multline*}

\begin{lemma} \label{DIMLEM}  If  $c < 1$ and $H$ is any submanifold of dimension $d$,  we have,
$$
\half D^c(n, d) =    2 d + \half.
$$
\end{lemma}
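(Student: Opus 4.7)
The plan is to prove this by direct dimension counting, using the natural parametrization of $\ccal^c_{\psi,\epsilon}$ provided in Lemma \ref{WFsPIc}. The parametrization $(s,t,\zeta) \mapsto (t, -|\zeta_M|_g, G_M^{cs+t}\times G_H^{-s}(\zeta), \zeta)$ realizes $\ccal^c_{\psi,\epsilon}$ as a manifold of dimension $2 + \dim \Char(Q_c) = 2 + (2n + 2d - 1) = 2n + 2d + 1$, as one expects from the fact that it is a Lagrangian in $T^*(\R \times M \times H) \times T^*(M \times H)$. I will then identify the codimensions of the two (and only two) genuine constraints imposed by intersecting with the sub-bundle $T^*\R \times T^*_H M \times T^*H \times T^*_H M \times T^*H$.

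In the parameters $(s,t,\zeta)$, the condition $G_H^{-s}(\zeta_H) \in T^*H$ on the outgoing side is automatic because $G_H^{-s}$ preserves $T^*H$, and similarly $\zeta_H \in T^*H$ is built in. The nontrivial constraints come from the $M$-factor:
\begin{enumerate}
\item[(a)] $\zeta_M \in T^*_H M$, i.e.\ the footpoint of $\zeta_M$ lies in $H$, which is $\codim H = n-d$ equations on the parameters;
\item[(b)] $G_M^{cs+t}(\zeta_M) \in T^*_H M$, i.e.\ the footpoint of the flowed-out covector lies in $H$, which is another $n - d$ equations.
\end{enumerate}
I would next show these two constraint sets are independent (transverse) on the portion of $\ccal^c_{\psi,\epsilon}$ retained after the cutoff $(1 - \chi_\epsilon)$. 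Constraint (a) is manifestly transverse, since it restricts only the position of $\zeta_M$ while leaving momentum and all other parameters free. Constraint (b) is transverse at any point where the geodesic direction of $\zeta_M$ after time $cs+t$ is not tangent to $H$: varying $t$ then moves the endpoint footpoint in a direction with nonzero $H^\perp$ component, and this together with variations of the footpoint of $\zeta_M$ within $H$ and of the direction of $\zeta_M$ lets the endpoint footpoint sweep out a full $n$-dimensional neighborhood in $M$, so that requiring it to lie in $H$ is a transverse codimension $n - d$ condition. The hypothesis $c<1$ together with the microlocal cutoff $(1 - \chi_\epsilon)$ excising tangential and conormal directions is exactly what ensures this transversality along the bicharacteristics retained in $\ccal^c_{\psi,\epsilon}$; this is essentially the content of Lemma \ref{LRLEMLEM}, whose cleanliness statement upgrades here to actual transversality once the tangential locus has been removed by $B_\epsilon$.

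Putting the pieces together,
\[
D^c(n,d) \;=\; \dim \ccal^c_{\psi,\epsilon} \;-\; 2(n-d) \;=\; (2n+2d+1) - 2(n-d) \;=\; 4d + 1,
\]
so
\[
\tfrac{1}{2} D^c(n,d) \;=\; 2d + \tfrac{1}{2},
\]
as asserted.

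The main obstacle is the transversality verification for constraint (b), since naively the intersection merely inherits the cleanliness of Lemma \ref{LRLEMLEM} and that alone would only bound the dimension from one side (a clean intersection may carry excess). What makes the argument work is that, after the cutoff $B_\epsilon$, every covector in $\ccal^c_{\psi,\epsilon}$ whose flowed image lies in $T^*_H M$ hits $H$ nontangentially, so the transverse crossing of the geodesic with $H$ lets the $n-d$ defining equations of $H$ pull back to $n-d$ independent conditions on $(s,t,\zeta_M)$. The remaining steps of the dimension count are then purely arithmetic.
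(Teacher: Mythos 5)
Your proposal is correct and follows essentially the same route as the paper's proof: a direct dimension count on the natural $(s,t,\zeta)$ parametrization of $\ccal^c_{\psi,\epsilon}$, imposing the two footpoint constraints $\zeta_M \in T^*_H M$ and $G_M^{cs+t}(\zeta_M) \in T^*_H M$, each of codimension $n-d$, and appealing to the non-tangency of the geodesic with $H$ (enforced by $c<1$ and the cutoff $B_\epsilon$) to ensure these constraints are transverse. The only cosmetic difference is bookkeeping: you start from $\dim \Char(Q_c) = 2n+2d-1$ and cut by both footpoint conditions, whereas the paper restricts first to $\R^2 \times \dot T^*_H M \times \dot T^*H$ and then cuts by $\sigma_{Q_c}=0$ and the single outgoing footpoint condition; the arithmetic, $D^c(n,d) = 4d+1$, is identical. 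One small note: your transversality discussion for constraint (b) gestures at "sweeping out a full $n$-dimensional neighborhood," which is more than needed — it suffices that the map $(s,t,\zeta_M)\mapsto f_H(\pi G_M^{cs+t}(\zeta_M))$ has rank $n-d$, and the paper handles this by observing that the first-return time to $S^*_H M$ is smooth on $\supp(1-\chi_\epsilon)$, which is a cleaner way to package the same nontangential-crossing argument.
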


\begin{proof}
The equation for  $\ccal_{\psi, \epsilon}^c$  involves  $2 + 2n + 2d$  parameters   
$(t, s, \zeta_M, \zeta_H) \in \R \times \R \times \dot T^* M \times \dot T^*H$, and $1 + 2 (n-d)$ constraints. One is that 
$\zeta \in \dot T_H^* M$, so we may regard the parameters as $(t, s, \zeta_M, \zeta_H) \in \R \times \R \times \dot T_H^* M \times  \dot T^*H$,  and then we have
 $1 + (n-d)$ further constraints,
\begin{itemize}

\item $\sigma_{Q_c} (\zeta) = 0$ (which implies  $G_M^{cs +t} \times G_H^{-s}(\zeta) \in {\rm Char}(Q_c)$);

\item $G_M^{t + cs} (\zeta_M) \in T^*_H M$. If $f_H:U \to \R^{n-d} $ is a local  defining function of $H$ in $U \subset M$, then we may write
the $(n-d)$ constraints as  $\pi^* f_H (G_M^{t + cs} (\zeta_M)) = 0$ (see Section \ref{HSECT} and Lemma \ref{FAILLEM}.)

 \end{itemize} 

The first return time from $S^*_H M$ to itself is a smooth function on the support of $1 - \chi_{\epsilon}$ (see \cite[Section 2.3]{TZ13}). Hence, the
solutions $(\sigma, \zeta_M)$  of $\pi^* f_H(G_M^{\sigma}(\zeta_M)) = 0$ is a smooth submanifold of $T^*_H M$ of codimension
$n -d$. 
 
 Hence, the dimension \eqref{DIM} is given by
$$D^c(n,d)  = 2 + d + n + 2d - (n-d)  - 1 = 4 d + 1 . $$

This completes the proof of Lemma \ref{DIMLEM}.
\end{proof}
 
We now complete  the proof of Proposition \ref{LRLEMPROP}.

We now apply  Lemma \ref{WFsPIc},  Lemma \ref{REDUCTIONLEMEXT} and  Lemma \ref{DIMLEM}, with 
  $
X = H \times H \subset M \times H$. 
The  intersection has dimension $4 d + 1$. The fiber dimension 
over $\R \times (H \times H) \times (H \times H)$ is $4 \dim H + 1$, so we subtract $\half (4 d +1)$. 
By Lemma \ref{REDUCTIONLEMEXT},
$$\ord  e^{it P} \psi(Q_c)  + \half (n-d) + 2 d +\half -\half  (4 d+ 1) = \ord  e^{it P} \psi(Q_c)  + \half (n-d) . $$
\end{proof}

%

\section{Asymptotics of $  N^{c}_{ \psi, \rho,  H  }(\lambda)$ : Proof of Theorem \ref{main 4} and Theorem \ref{main 5}} \label{ASYMPTOTICSECT}

\subsection{Diagonal pullback and pushforward to $\R$. }
The next (and final) step is to compose with the diagonal pullback and to integrate over $H$. By the diagonal embedding $\Delta_H \times \Delta_H$
we mean   the partial diagonal embedding
$$
	(\Delta_H \times \Delta_H) (x,y)\in H \times H \to (x,x, y,y)  \in (H \times H) \times (H \times H),
$$
and let $(\Delta_H \times \Delta_H)^*$ be the corresponding pull back operator. For instance, when applied to the rank one 
orthogonal projections onto the  joint eigenfunctions, the Schwartz kernels satisfy
\begin{multline*}
(\Delta_H \times \Delta_H)^* (\gamma_H \otimes I)  (\phi_{j,k})  \otimes [(\gamma_H \otimes I)  (\phi_{j,k}) ]^*(x, y) \\
= (\gamma_H  \phi_j(x) \psi_k(x)) \otimes  \overline{ (\gamma_H \phi_j(y)\psi_k(y)) } \in C(H \times H).
\end{multline*}
  We then compose with the   pushforward under the projection,  $\Pi: \R \times H \times H \to \R$. The pushforward of the eigenfunctions is given by,
\[
	\Pi_*  \left((\Delta_H \times \Delta_H)^* (\gamma_H \otimes I)  (\phi_{j,k})  \otimes [(\gamma_H \otimes I)  (\phi_{j,k}) ]^* \right)
=   \left| \int_H \gamma_H \phi_j(x) \psi_k(x) \, dV_H(x) \right|^2.
\]

 We then apply the pushforward-pullback operation to the Fourier integral operator \eqref{LRLEM}.
To keep track of which components are being paired by the diagonal embedding, we note that \eqref{LRLEM} is
$$
V_H(t, \psi) = \sum_{j,k} e^{it \lambda_j }  \psi (\mu_k - c \lambda_j)  ((\gamma_H \otimes I) \phi_{j,k}  \otimes [(\gamma_H \otimes I )\phi_{j,k}]^*)
$$
and its pushforward-pullback is given by the following: The $S^c(t, \psi)$ defined in  \eqref{SpsiDEF}  is given by,

\begin{align} \label{St}
S^c(t, \psi) &= \Pi_*(\Delta_H \times \Delta_H)^* (\gamma_H \otimes I ) e^{i t P} \psi(Q_c) (\gamma_H \otimes I)^* \\
\nonumber
&= \sum_{j,k} e^{it \lambda_j} \psi(\mu_k-c \lambda_j) \left| \int_H \phi_{j,k}(x,x) dV_H (x) \right|^2.
\end{align}
Of course, 
\begin{align} \label{FTFORM}
{\mathcal F}_{\lambda \to t} d N^{c} _{\psi, H  }(t) &= \sum_{j,k} e^{it\lambda_j} \psi(\mu_k -c  \lambda_j) \left| \int_H \phi_j \overline{\psi_k} \, dV_H  \right|^2\\
\nonumber
&= S_c(t, \psi).
\end{align}

\begin{definition} \label{SOJOURN} Recall $\gcal_c$ from \eqref{gcalc}.  Let
\begin{align*}
\scal_{c, \psi} &:= \{t \in \R: \exists s \in \supp \psi, \gamma \in \gcal_c: \gamma \; \text{ is a $(c, s, t)$-bi-angle}\}, \\
\scal_c &:= \{t \in \R : \exists s \in \R, \gamma \in \gcal_c: \gamma \text{ is a $(c, s, t)$-bi-angle}\},
\end{align*}
\end{definition}

$S^c(t, \psi) $ is the distribution trace of  the restriction of the Schwartz kernel of $e^{it P} \psi(Q_c)$, and 
\eqref{cpsirho} is its integral in $dt$ against $\hat{\rho} (t) e^{it \lambda}$. 
Thus, in   \eqref{St}, we have   expressed the smoothed  Kuznecov sums as compositions of Fourier
integral operators, specifically as the composition of the diagonal pullback  to a suitable diagonal in $H \times H \times H \times H$
and then the pushforward over the diagonal. These operations are also Fourier integral operators, and as reviewed in Section \ref{FIOSECT},
we can use the calculus of Lagrangian distribution under pullback and pushforward to   determine the singularities
 of $S^c(t, \psi)$ and then  the
 asymptotics as $\lambda \to \infty$ of \eqref{cpsirho} (see also \cite[Proposition 1.2]{DG75} and \cite{GU89} and many subsequent articles).

The next step is to prove that \eqref{St} is Lagrangian distribution and to  use Proposition \ref{LRLEMLEM} to calculate the singular set $\scal_c$
of Definition \ref{SOJOURN},  and the order and
principal symbol of \eqref{St} at the singular points. 

We will need to recall the definition of a homogeneous Lagrangian distribution. By definition,   $I^{\frac{\nu}{2} - \frac{1}{4}}(\Lambda_T)$, with 
$\Lambda_T =  \dot  T^*_T \R$,  consists of scalar multiples of the
distribution \begin{equation} \label{I*} \int_0^{\infty} s^{\frac{\nu-1}{2}} e^{- i s (t - T)} ds \end{equation} plus similar distributions of lower order and perhaps a smooth function.

\begin{proposition} \label{MAINFIOPROP} Assume  that $\Gamma^c_{\psi, \epsilon} \subset  \dot T^*(\R \times H \times H \times H \times H)$ (cf. Proposition \ref{LRLEMPROP})  is a Lagrangian submanifold,  let 
$(\Delta_H \times \Delta_H)^*\Gamma^c_{\psi, \epsilon} $ be its pullback under the diagonal embedding $\Delta_H(y, y') = (y, y, y', y')$  in the sense of \eqref{PB} and let $\Pi: \R \times H \times H \to \R$ be the natural projection.   Let 
\begin{equation} \label{LambdacDEF} \Lambda^c_{\psi} := \Pi_*(\Delta_H \times \Delta_H)^*\Gamma^c_{\psi, \epsilon} \end{equation}
be the pushforward-pullback of $\Gamma^c_{\psi, \epsilon}$.  Then,

\begin{align} 
\label{PUSHPULL}\Lambda^c_{\psi}  &= \{(t, \tau) \in T^*\R: \exists (s, \sigma, q, \eta): s \in \supp(\hat{\psi}) : \eqref{EQ} \; \text{ is satisfied}\}\\
\nonumber &= \bigcup_{t \in \scal_{c, \psi}} \gcal_c(t), \;\; (\text{see}\;\; \eqref{gcalct}).
\end{align} 

Moreover,  
\eqref{PUSHPULL} is a clean composition if and only if the equation  \eqref{EQ}  is clean in the sense of condition of Definition  \ref{CLEAN},
and then
\begin{equation} \label{ordStpsi}
S^c(t, \psi) 
\in
I^{n - \frac{7}{4} } (\R, \Lambda_{\psi}^c),  (0 < c < 1).
\end{equation}
The displayed order occurs at $t=0$ and the  symbol $\sigma(S^c(t, \psi))$ at $t =0$ equals, 
\begin{equation} \label{sigmaStpsi}
	\sigma(S^c(t, \psi)) |_{t =0}  = C_{n, d} \; a_c^0(H, \psi) \tau^{n-2} |d \tau|^{\half}.
\end{equation}
$C_{n,d}$ is a dimensional constant depending only on $n = \dim M, d = \dim H$.

\end{proposition}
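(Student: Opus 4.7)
The plan is to realize $S^c(t,\psi)$ as a pushforward-pullback of the reduced Fourier integral kernel furnished by Proposition~\ref{LRLEMPROP}, and then to apply the Guillemin-Sternberg calculus of Lagrangian distributions under clean composition (see \cite{GS77, GS13}). Let $\iota\colon \R\times H\times H \hookrightarrow \R\times(H\times H)\times(H\times H)$ denote the diagonal embedding $(t,y_1,y_2)\mapsto(t,y_1,y_2,y_1,y_2)$ (codimension $2d$), and $\Pi\colon\R\times H\times H\to\R$ the time projection (fiber dimension $2d$). From \eqref{St} we have $S^c(t,\psi)=\Pi_*\iota^* K$, where $K$ is the Schwartz kernel of $\gamma_{\R\times H\times H}\circ(B_\epsilon\otimes I)e^{itP}\psi(Q_c)\circ\gamma_{H\times H}^*$, lying in $I^{\rho}(\R\times H^4,\Gamma^c_{\psi,\epsilon})$ with $\rho=-3/4+(n-d)/2$ by Proposition~\ref{LRLEMPROP}. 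The cutoff $B_\epsilon$ is harmless at top order: on the principal component, for $0<c<1$, nothing is tangent or conormal to $H$, so after the diagonal trace it may be replaced by $1$ without affecting the symbol at leading order.

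First I will compute the canonical relation $\Lambda^c_\psi=\Pi_*\iota^*\Gamma^c_{\psi,\epsilon}$. Pullback by $\iota$ restricts $\Gamma^c_{\psi,\epsilon}$ to the locus where its two $T^*H\times T^*H$ components share base points and adds the corresponding cotangent fibers; in the parametrization of Proposition~\ref{LRLEMPROP} by $\zeta=(\zeta_M,\zeta_H)\in \dot T^*_HM\times\dot T^*H$ with $\sigma_{Q_c}(\zeta)=0$, this is exactly the bi-angle equation \eqref{EQ} that $G_H^{-s}\circ\pi_H\circ G_M^{cs+t}$ fixes $\pi_H(\zeta)$. The subsequent pushforward under $\Pi$ integrates out the $(H\times H)$-coordinates, leaving in $T^*\R$ the Lagrangian described by \eqref{PUSHPULL}. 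The two operations compose cleanly precisely when $\gcal_c$ is clean in the sense of Definition~\ref{CLEAN}: linearizing the constraints shows that transversality of $\Gamma^c_{\psi,\epsilon}$ to $N^*\iota(\R\times H\times H)$ together with the constant-rank requirement on $\Pi|_{\iota^*\Gamma^c_{\psi,\epsilon}}$ is equivalent to the tangent space of the solution set of \eqref{EQ} equaling the $+1$-eigenspace of $D(G_H^{-s}\circ\pi_H\circ G_M^{cs+t})$ at each solution.

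The order $n-7/4$ then follows by tracking degrees through the pullback-pushforward calculus, with excess coming from the principal component of dimension $\dim S^c_H M=n+d-2$: clean pullback by $\iota$ shifts by $+d/2$, clean pushforward by $\Pi$ shifts by $-d/2$, and the combined excess contributes an additional $+(n+d-2)/2$, summing to $\rho+(n+d-2)/2=-3/4+(n-d)/2+(n+d-2)/2=n-7/4$, which matches \eqref{ordStpsi}. Components of $\gcal_c^0$ of strictly smaller dimension that lie in $\{0\}\times\supp\hat\psi\times S^c_H M$ produce strictly lower-order contributions and thus only enter the conormal expansion of $S^c(t,\psi)$ at $t=0$ through subprincipal terms.

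Finally, I compute the principal symbol at $t=0$ by the standard symbol calculus for pushforward-pullback. The symbol of $K$ is inherited, via the reduction $\gamma_{H\times H}$ and Lemma~\ref{WFsPIc}, from $(2\pi)^{-1/2}\hat\psi(s)|ds|^{1/2}\otimes|dt|^{1/2}\otimes|d\mu_L|^{1/2}$ on $\ccal^c_\psi$. Under $\Pi_*\iota^*$ the resulting symbol at $t=0$ is a fiber integral over $\gcal_c^{(0,0)}\simeq S^c_H M$ evaluated at $s=0$: the $s=0$ value produces the factor $\hat\psi(0)$, while integrating the Liouville measure on $\Char(Q_c)$ restricted to $S^c_H M$ and pushing down to $H$ yields the Euclidean-Leray volume $c^{d-1}(1-c^2)^{(n-d-2)/2}\hcal^d(H)$ (the extra $(1-c^2)^{-1/2}$ arising from the density of the Leray form $\Omega/d\sigma_{Q_c}$, as recalled in Section~\ref{SHARPWKINTRO}). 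The homogeneity factor $\tau^{n-2}|d\tau|^{1/2}$ is forced by the order $n-7/4$ (since $\tau^{\beta}|d\tau|^{1/2}$ corresponds to the class $I^{\beta+1/4}$ on a one-dimensional conic Lagrangian), and the universal $C_{n,d}$ absorbs the $(2\pi)$-powers and Maslov sign. The main obstacle is precisely this last step: carrying half-densities through three successive clean FIO operations and matching them to the intrinsic Leray volume on $S^c_H M$ is delicate, and it is in this bookkeeping that the $(1-c^2)^{-1/2}$ factor appears, reflecting the fold degeneration of $\pi_H\colon S^*_H M\to B^*H$ in the limit $c\to 1$.
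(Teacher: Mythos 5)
Your proposal is correct and follows essentially the same route as the paper: realize $S^c(t,\psi)$ as the pushforward-pullback $\Pi_*(\Delta_H\times\Delta_H)^*$ of the reduced kernel from Proposition~\ref{LRLEMPROP}, identify the cleanliness of the composition with that of the bi-angle equation, compute the order via the excess $\dim\gcal_c^0 = n+d-2$, and obtain the symbol by integrating the transported Liouville half-density over $S^c_H M$. The only superficial difference is bookkeeping: you separately track the $+d/2$ shift from the codimension-$2d$ pullback and the $-d/2$ shift from the fiber-dimension-$2d$ pushforward (which cancel, leaving the excess term), whereas the paper treats $\Pi_{t*}\Delta_H^*$ as a single order-zero operator following \cite[Lemma 6.3]{DG75} and then adds $e(t)/2$; both accountings give $n-\tfrac{7}{4}$.
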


\begin{remark} \label{IREM} 

$S^c(t, \psi)$ is a sum of (translates of)  homogeneous distributions with singularities at the discrete set $\scal_c$. The order
displayed above is the order of the singularity at $t=0$. The order of the  singularity of \eqref{St} at any $t$ is less than or equal to the order at $t=0$. In the dominant case of Definition \ref{DOMDEF}, the order displayed above only occurs at $ t =0$.
\end{remark}

\begin{proof} The first step is to calculate the wave front relation of the pullback $(\Delta_H \times \Delta_H)^*\Gamma^c_{\psi, \epsilon} $  using  the pullback formula  \eqref{PB}. The  calculation is similar to that of  \cite[(1.20)]{DG75} for the pullback to the `single diagonal' in $M \times M$. The pullback to the `double-diagonal' 
 $\Delta_{H \times H} \subset H \times H \times H \times H$ subtracts  the two covectors at the same base points in the double-diagonal, i.e.
 \begin{multline*}
(\Delta_H \times \Delta_H)^*\Gamma^c_{\psi, \epsilon} = \{(t, \tau, (q,\eta -\pi_H \xi) ; (q', \eta' - \pi_H  \xi')) \in T^*\R \times T^*H \times T^*H: \exists s \\ 
(t, \tau, (q, \eta), (q, \pi_H \xi), (G_H^s(q, \eta), \pi_H G_M^{t + cs}(q, \xi)))  \in \Gamma^c_{\psi, \epsilon}\}, \end{multline*}
or in the notation  $\zeta = (\zeta_H, \zeta_M) =  (x, \xi, y, \eta'')  \in {\rm Char}(Q_c)$ such that 
  $(x,  \xi) \in T^*_H M,   G^{c s +t}_M(x, \xi) \in T^*_H M$,   $\zeta_H = (q, \eta), \zeta_H' = (q', \eta')$, and with $\pi: T^*X \to X$ the natural projection,

 \begin{multline*}(\Delta_H \times \Delta_H)^*\Gamma^c_{\psi, \epsilon}  =
\{(t, \tau, \zeta_H, \zeta_H') :
\exists (s,  \pi_{H \times H} (x, \xi, y, \eta'') , \pi_{H \times H} (G^{ cs +t}_M \times G^{-s }_{H}) ((x, \xi, y, \eta'') )) \in \Gamma^c_{\psi, \epsilon}, \\
(q, q, q', q') =  (x, y, \pi G_M^{s + t}(x, \xi), \pi G_H^{-s}(y, \eta'') ), \\
(\zeta_H, \zeta_H') =   (\Delta_H \times \Delta_H)^*  (\pi_{H \times H} \zeta, \pi_{H \times H} (G^{ c s +t}_M \times G^{-s }_{H}) (\zeta))) \} \end{multline*}

\begin{remark} For the sake of clarity, we note that $ (\Delta_H \times \Delta_H)^*\Gamma^c_{\psi, \epsilon} $  consists of analogues for c-bi-angles of geodesic loops. Unlike
a closed geodesic, the initial and terminal directions of a geodesic loop do not have to be the same. A bi-angle is the analogue of a closed
geodesic but the `bi-angle-loop'  consists of two geodesic arcs, an $M$-arc and an $H$-arc  from $q$ to $q'$, with 
no constraint that the projection of the initial or terminal directions of the $M$ arc agree with those of the $H$ arc.

\end{remark} 

 Next, we pushforward the canonical relation  $ (\Delta_H \times \Delta_H)^*\Gamma^c_{\psi, \epsilon}$  under the projection $\Pi_t: \R \times H \times H \to \R$,
 $\Pi_t(t, \zeta_H, \zeta_H') = t$.  As in   \eqref{PF}  (cf.  \cite[(1.21)]{DG75}), the pushforward 
operation erases  points of $$ (\Delta_H \times \Delta_H)^*\Gamma^c_{\psi, \epsilon} = \{(t, \tau, (q,\eta -\pi_H \xi) ; (q', \eta' - \pi_H  \xi'))\} $$
unless $\eta -\pi_H \xi =  \eta' - \pi_H \xi' = 0$. Equivalently,   the pushforward relation only retains covectors  normal to the fiber, which results in `closing' the bi-angle-loop wave front set to the set of 
`closed bi-angles'. Hence,
the pushed forward Lagrangian is,
  $$\begin{array}{lll} \Pi_{t *}  (\Delta_H \times \Delta_H)^*\Gamma^c_{\psi, \epsilon}  & = &  \{(t, \tau); \;
  (t, \tau, (q,\eta -\pi_H \xi) ; (q', \eta' - \pi_H  \xi'))  \in  (\Delta_H \times \Delta_H)^*\Gamma^c_{\psi, \epsilon}: 
\\&&\\ & &\eta -\pi_H \xi =  \eta' - \pi_H  \xi' = 0 \}  \\ &&\\
  & = & \{(t, \tau): \gcal_c(t) \not= \emptyset\} =   \Lambda_{\psi}^c,\;\; ({\rm cf.} \; \eqref{gcalc})
 
 \end{array}$$

 The pushforward Lagrangian is the stage in the  sequence of compositions where closed geodesic bi-angles first occur.  $\zeta_M$ is constrained to make an angle of $\arccos \; c $ with $H$. At this
state, the cutoffs $\chi_{\epsilon}$ away from tangential and normal directions become unnecessary and $B_{\epsilon}$ may
be removed. Cleanliness of the diagonal pullback is equivalent to the cleanliness conditions on geodesic bi-angles of Definition \ref{CLEAN}, completing the proof of \eqref{PUSHPULL}.

The next step is to calculate the order \eqref{ordStpsi}  of $S^c(t, \psi)$ at its singularities.
As in \cite[Lemma 6.3]{DG75},  $\Pi_{t*} \Delta_H^*$ is  an operator of order $0$  from $\R \times H \times H \to \R$ with the Schwartz kernel of the identity operator on $\R \times H \times H \times \R$.    The excess of its composition with $\Gamma_{\psi,t}^c$  is by definition (and by \eqref{gcalc}),
\begin{equation} \label{etform}
	e(t) = \dim \{ (s, \xi) \in{\rm supp} \;\hat{\psi} \times  S^c_H M: G_H^{-s} \circ \pi_H G_M^{cs + t} (\xi) = \pi_H(\xi)\} = \dim \gcal_c(t).
\end{equation}

We refer to Section \ref{FIOSECT} for background  (see \eqref{EXCESSCOMP} and \eqref{EXCESS}). The calculation of the excess is parallel to 
to the calculation of the excess of $(\pi_* \Delta^*) U(t)$  in \cite{DG75}, where the excess is the dimension of the fixed point set of $G^t$ on $S^*M$.
 Another description of the excess is given   in \cite[Page 18]{HoIV}. It is the fiber dimension of the fiber of the intersection over the
composition (see Section \ref{FIOSECT} for the precise statement); that explains  why we may restrict to  $S^c_H M$ and not $T^c_H M$ and the fiber is $\dim \gcal^t_c$. By the order calculation at the end of the proof of Proposition \ref{LRLEMPROP},   the order of the singularity at each $t
\in \scal_c$ in the singular support is given by,
\begin{equation} \label{ORDST}
{\rm ord} \;S^c(t, \psi)  =
{\rm ord} e^{it P} \psi(Q_c)   + \half (n-d) + \frac{e(t)}{2} = -\frac{3}{4} + \half (n-d) +  \frac{\dim \gcal^t_c }{2}.
\end{equation}

In the notation  \eqref{gcalct},
$$e(0)  = \dim \gcal_c^{0}  =  
\dim \gcal_c^{(0,0)} = \dim S^c_H M = n + d - 2,   \; (0 < c < 1).
$$   
Indeed,  at each point $y \in H$ and for $c \leq 1$,  $$S^c_y M  = \{\xi \in S_y^* H, \xi = \eta + \nu,  \eta \in T_y^* H, \nu \in N^*_y H,
|\eta| = c,  |\nu| = \sqrt{1 - c^2}\}.$$ For $0 < c < 1$, $S_y^c M $  is a hypersurface in $S^*_y M$ and has dimension $n  - 2$. 

Combining with \eqref{ORDST}, it  follows that the order of \eqref{St} at $t=0$ equals,
\begin{equation} {\rm ord} \;S^c(t, \psi) |_{t =0} = 
- \frac{3}{4} + \half (n -  d)  + \half (n + d-2) 
= n - 1 - \frac{3}{4}    \;\;(0 < c < 1).
\end{equation}

For general $T \in \scal_c$, 
\begin{equation} \label{genord} {\rm ord} \;S^c(t, \psi) |_{t =T} = - \frac{3}{4} + \half (n -  d)  + \half \dim \gcal^T_c.\end{equation}


To complete the proof of 
Proposition \ref{MAINFIOPROP}, we need to calculate the symbol of \eqref{St} at each singularity, and particular to
show that the symbol at  $t=0$ is given by \eqref{sigmaStpsi}. Since the coefficient at $t=0$ was
calculated in great detail in the earlier paper \cite{WXZ20}, we only briefly sketch the argument.

 The symbol is calculated using iterated pushforward and pullback formulae as reviewed in \eqref{PFPB} (see also \eqref{PB} -  \eqref{PF}).   The pushforward is by the canonical projection $\Pi_t$,  and the pull-back is under the canonical embedding $\Delta_H \times \Delta_H$.
In the terminology of \cite{GS13}, these maps must  be `enhanced' with natural half-densities to make them `morphisms'
on half-densities. As reviewed in Section \ref{MORPHISM}, the embedding must be enhanced by a half-density on the conormal bundle
of the image, and the projection must be enhanced by a half-density along the fiber. As discussed in \cite[Page 66]{DG75}, there is a natural
half-density on the conormal bundle of the diagonal. In fact, the enhancement construction in this case is quite simple, since the 
pull-back under $\Delta_H \times \Delta_H$ of the half-density symbol on $\Gamma_{\psi,\epsilon}^c$ produces a density on the fibers
of the projection $\Pi_t$.  One integrates this density over the fibers to obtain the symbol in \eqref{sigmaStpsi} at any $t$.
The half-density symbols are always volume half-densities on their respective canonical relations. In particular
when $c \in (0,1)$ the coefficient of the singularity at $t=0$ is a dimensional constant times ${\rm Vol}(S^c_H M)$.
\end{proof}

\subsection{Proof of Theorem \ref{main 4},  Theorem \ref{main 5} and  Proposition \ref{MORESINGS} } \label{3PROOFS}


By \eqref{FTFORM},  $  {\mathcal F}_{\lambda \to t} d N^{c}_{\psi, H  } = S^c(t, \psi)$ and Proposition \ref{MAINFIOPROP}
shows that $S^c(t, \psi)$ is a discrete sum of translated polyhomogeneous distributions in $t$. It follows from Remark \ref{IREM} that if a distribution lies in $I^{\frac{\nu}{2} - \frac{1}{4}}(\Lambda_T)$ then its inverse Fourier
transform  is asymptotic to $ s^{\frac{\nu-1}{2}}$. 
This suggests that the singularity at $t =0$ produces an asymptotic expansion of order $ n-2$ equal to   $ n-2$ for $0 < c < 1$.


The next Lemma gives the precise statement and concludes  the proof of Theorem \ref{main 4} for test functions whose support contains only the singularity at $t=0$. If one uses general test functions with Fourier transforms in $C_0^{\infty}(\R)$, one
adds similar contributions from the  non-zero $t \in \scal_c$. As mentioned above, the order of these singularities 
is no larger than the order at $ t=0$.  If they are less than that order $t= 0$ is called dominant (Definition \ref{DOMDEF}).

\begin{lemma}\label{CONVOLUTION} Assume $0 < c < 1$.  Let $\rho \in \scal(\R)$ with $\hat{\rho} \in C_0^{\infty}$, $\int \rho = 1$, and with ${\rm supp} \hat{\rho}$ in a
sufficiently small interval around $0$.  Then,
there exists $\beta_j \in \R$ and a complete asymptotic expansion,
$$  N^{c} _{\rho, \psi, H  }(\lambda) \sim  \lambda^{n-2} \sum_{j=0}^{\infty} \beta_j \; \lambda^{-j}, $$
with $\beta_0 =  \; C_{n,d} \;    a_c^0(H, \psi)$ (see Theorem \ref{main 5} for the general formula).

\end{lemma}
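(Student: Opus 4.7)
The plan is to carry out a standard Fourier transform / Lagrangian distribution asymptotic analysis, using Proposition \ref{MAINFIOPROP} as the key input. Starting from the Fourier inversion representation
\begin{equation*}
N^c_{\rho,\psi,H}(\lambda) = \frac{1}{2\pi}\int_{\R} \hat\rho(t)\, e^{-it\lambda}\, S^c(t,\psi)\, dt,
\end{equation*}
the hypothesis that $\supp \hat\rho$ is contained in a small interval around $0$, combined with Definition \ref{SOJOURNDEF}, guarantees that the only singularity of $S^c(t,\psi)$ meeting $\supp \hat\rho$ is the one at $t=0$. Hence, writing $S^c(t,\psi) = u(t) + r(t)$ where $u$ is the conormal part at the origin and $r$ is smooth on $\supp \hat\rho$, the contribution from $r$ is $O(\lambda^{-\infty})$ by non-stationary phase, and we may focus on $\int \hat\rho(t) e^{-it\lambda} u(t)\, dt$.

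By Proposition \ref{MAINFIOPROP}, $u \in I^{n-7/4}(\R,\Lambda^c_\psi)$ with $\Lambda^c_\psi$ locally equal to $T^*_0 \R \setminus 0$ near $t=0$. By the standard normal form for a Lagrangian distribution conormal to a point (one phase variable), $u$ can be written, modulo a Schwartz function, as an oscillatory integral
\begin{equation*}
u(t) = \int_{\R} e^{it\tau}\, a(\tau)\, d\tau,
\end{equation*}
where $a$ is a classical symbol on $\R$ of order $n-2$ with an asymptotic expansion $a(\tau) \sim \sum_{j \geq 0} a_j\, \tau^{n-2-j}$ as $\tau \to +\infty$, and rapidly decreasing as $\tau \to -\infty$ (the relevant half-line of the conormal being selected by the positivity of the $\lambda_j$). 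The principal symbol formula \eqref{sigmaStpsi} identifies $a_0 = C_{n,d}\, a_c^0(H,\psi)$, where all half-density normalizations are absorbed in the constant $C_{n,d}$.

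Substituting and interchanging orders of integration yields
\begin{equation*}
N^c_{\rho,\psi,H}(\lambda) = \int_{\R} a(\tau)\, \rho(\lambda - \tau)\, d\tau + O(\lambda^{-\infty}).
\end{equation*}
Since $\rho \in \scal(\R)$ with $\int \rho = \hat\rho(0) = 1$, the change of variable $\tau = \lambda - s$, together with the Taylor expansion of $a(\lambda - s)$ in powers of $s/\lambda$, produces termwise a full polyhomogeneous expansion: each $a_j \tau^{n-2-j}$ contributes to orders $\lambda^{n-2-j}, \lambda^{n-3-j}, \ldots$, all captured using that $\rho$ is rapidly decaying and that $\int \rho(s)\, s^k\, ds$ is a well-defined constant for each $k$. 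Combining, one obtains
\begin{equation*}
N^c_{\rho,\psi,H}(\lambda) \sim \sum_{j \geq 0} \beta_j\, \lambda^{n-2-j}, \qquad \beta_0 = a_0 \int_\R \rho(s)\, ds = C_{n,d}\, a_c^0(H,\psi),
\end{equation*}
as claimed.

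The main subtlety lies not in the Tauberian/Fourier argument above, which is essentially formal once Proposition \ref{MAINFIOPROP} is in hand, but in pinning down the universal constant $C_{n,d}$ by carefully tracking the half-density conventions through the pullback-pushforward chain that produced \eqref{sigmaStpsi}; this bookkeeping is already carried out in \cite{WXZ20} and we simply invoke it here.
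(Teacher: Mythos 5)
Your proof is correct and follows essentially the same route as the paper's: both start from the Fourier-pairing representation $N^c_{\rho,\psi,H}(\lambda)=\int\hat\rho(t)\,e^{\pm it\lambda}S^c(t,\psi)\,dt$, invoke Proposition \ref{MAINFIOPROP} to identify $S^c(t,\psi)$ near $t=0$ as a conormal (Lagrangian) distribution on $\R$ with a classical polyhomogeneous symbol of top order $n-2$ whose leading coefficient is $C_{n,d}\,a_c^0(H,\psi)$, swap the $t$- and frequency-integrations to land on $\int a(\tau)\rho(\lambda-\tau)\,d\tau$, and then expand in $\lambda$ using $\int\rho=1$ and rapid decay. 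Your packaging via a single classical symbol $a(\tau)$ (rather than the paper's formal sum $\sum_j\alpha_j\int_0^\infty s^{n-2-j}e^{-ist}\,ds$, handled modulo $C^\infty$) is a cosmetic difference; both are the same conormal-distribution asymptotics, and your explicit split $S^c=u+r$ with $r$ smooth on $\supp\hat\rho$ just makes the paper's ``mod $C^\infty$'' tacit step visible.
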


\begin{proof} The asymptotic expansions follow immediately from Proposition \ref{MAINFIOPROP}.
By definition (see \eqref{St}),

\begin{equation} \label{EXPRESSIONS2} \begin{array}{lll} N^{c} _{\rho, \psi, H  }(\lambda) & = &   \sum_{j, k}  \rho(\lambda - \lambda_k) \psi( \mu_j - c \lambda_k)  \left| \int_{H} \phi_j \overline{\psi_k}dV_H \right|^2 \\ && \\ & =& \int_{\R} \hat{\rho}(t) e^{it \lambda}   S^c(t, \psi)  dt
. \end{array}  \end{equation}
If $\gcal_c^{0,0}$ is the only component with $t=0$, then by Proposition \ref{MAINFIOPROP} and by definition of Lagrangian distributions \eqref{I*}, 
for sufficient small $|t|$, 
\begin{equation} \label{SI*} S^c(t, \psi) = \sum_{j=0}^{\infty} \alpha_j  \int_0^{\infty} s^{n-2 -j} e^{- i s t} ds \; {\rm mod}\; C^{\infty}.  \end{equation}

{$$ \begin{array}{lll} \int_{\R} \hat{\rho}(t) e^{it \lambda}   S^c(t, \psi)  dt & = &
\sum_{j=0}^{\infty} \alpha_j  \int_0^{\infty} s^{n-2  -j} \left(  \int_{\R} \hat{\rho}(t) e^{it \lambda}e^{- i s t} dt \right) ds + O(\lambda^{-\infty})
\\ &&\\& = &
\sum_{j=0}^{\infty} \alpha_j  \int_0^{\infty} s^{n-2 -j} \rho(\lambda -s)  ds + O(\lambda^{-\infty}) \\ && \\
& = & \sum_{j=0}^{\infty} \alpha_j  \int^{\lambda}_{-\infty} (\lambda - s) ^{n-2 -j} \rho(s)  ds + O(\lambda^{-\infty})
\\ &&\\ & = & \sum_{j=0}^{\infty} \alpha_j \int^{\infty}_{-\infty} (\lambda - s) ^{n-2 -j} \rho(s)  ds + O(\lambda^{-\infty})
\\ &&\\ & 
\end{array}$$}where $\tilde \alpha_0 = \alpha_0$ is the principal symbol of $S^c(t, \psi)$ at $t=0$ (given in Proposition \ref{MAINFIOPROP}), and $\tilde \alpha_j$ are obtained in part by expanding
\[
	\int_{-\infty}^\infty (\lambda - s)^{n-2-j} \rho(s) \, ds
\]
in $\lambda$.

As discussed in Section \ref{supplarge}, when $\hat{\psi}$ is arbitrarily large, the principal symbol is changed by summing over the
components of $\gcal_c^0$.  As in that section, the $(c, s, 0)$-bi-angles with $t = 0$  is assumed to be a  union of clean components $Z_j(0)$ of dimension $d_j$. In our situation $Z_j$ is a component of  $ \gcal^0_c$. 
Then, for $t$ sufficiently close to $0$,  $$ S^c(t, \psi) = 
\sum_j \beta_j(t), $$ with 
\begin{equation}\label{betaeq2} 
\beta_j(t) = \int_{\R} \alpha_j(s) e^{- i s t} ds, \;\; \text{ with}\;\; \alpha_j(s) \sim (\frac{s}{2 \pi i})^{ -1 + \half (n -d)+\frac{d_j}{2}}\;\; i^{- \sigma_j} \sum_{k=0}^{\infty} \alpha_{j,k} s^{-k}, \end{equation}
where $d_j $ is the dimension of the component $Z_j(0) \subset \gcal_c^0$.
\end{proof}

\subsubsection{Proof of Proposition \ref{MORESINGS}}
\begin{proof} 

By Proposition \ref{LRLEMPROP}, at  a non-zero period, the exponents are calculated from \eqref{ORDST}, with the excess \eqref{etform} given by $\dim \gcal^t_c$.
Hence, the exponents at a non-zero period are now $\half (n-d)  - 1 +  \frac{\dim \gcal^t_c }{2}$.

In the notation of   \cite[Theorem 4.5]{DG75}, we are assuming that the set of $(c, s, t)$-bi-angles with $t = T \in {\rm singsupp}\; S^c(t, \psi) \backslash \{0\}$  is a union of clean components $Z_j$ of dimension $d_j$. In our situation $Z_j$ is a component of  $ \gcal^t_c$. 
Then, for $t$ sufficiently close to $T$,  $$S^c(t, \psi) = 
\sum_j \beta_j(t - T), $$ with 
$$
\beta_j(t) = \int_{\R} \alpha_j(s) e^{- i s t} ds, \;\; \text{ with}\;\; \alpha_j(s) \sim (\frac{s}{2 \pi i})^{ -1 + \half (n -d)+\frac{d_j}{2}}\;\; i^{- \sigma_j} \sum_{k=0}^{\infty} \alpha_{j,k} s^{-k},$$
where $d_j $ is the dimension of the component $Z_j$ of $\gcal_c^t$.
\end{proof}

\subsection{Sub-principal term of $N_{\rho, \psi}(\lambda)$ when $\hat{\psi}$ has small support and both $\hat{\psi}$
and $\hat{\rho} $ are even.}\label{SUBPRINCIPALSECT}

{The subprincipal term may be calculated by the stationary phase method, but even the subprincipal term is a sum of a large number of terms with
up to six derivatives on the phase and two derivatives of the amplitude. It is easily seen (for instance, on a flat torus) that the subprincipal term contains
sums with the derivatives of $\hat{\rho}(t)$ and $\hat{\psi}(s)$ at $t=s=0$). It is assumed in \cite[Proposition 2.1]{DG75} (which only involves $\hat{\rho}$) that
$\hat{\rho} \equiv 1$ near $0$, hence none of its derivatives contribute to the subprincipal term. We are making the same assumption. 

In addition, derivatives of the amplitude of the wave kernel may contribute to the subprincipal term. We use a parity argument as in \cite[Page 48]{DG75} to show that the
contribution of these terms is also zero. 
 The sub-principal symbol of $\sqrt{-\Delta_M}$ and of $\sqrt{-\Delta_H}$ both vanish, 
    hence the subprincipal symbols of $\sqrt{-\Delta_M} \otimes I$ and of $Q_c$ both vanish.   The homogeneous
part of degree k in  $\sigma_P(x, \xi)$ is even, resp. odd if k is even, resp. odd. By induction with respect to  $r$ it follows that $(\frac{\partial}{\partial t})^r
a_{-j}$ is an even, resp. odd. if $r-j$ is even, resp. odd. 
    
The amplitude of
    $e^{it P} e^{i s Q_c}$    is obtained by integrating the parametrix formula for $e^{i t P} \circ e^{- i s Q_c}$ as
    a tensor product $e^{i (t - c s) P_M} \otimes     e^{i s P_H} $. The parities of the terms in the amplitude are thus determined as in \cite{DG75},
for $s = t = 0$. One has to restrict the $M$ amplitudes to $H$ but they still have the same parity. One further has to restrict them to
    the diagonals in $H \times H$, which seems to multiply the amplitudes. But  the subprincipal term can only be obtained as the product of the
    principal symbol and the  subprincipal symbol. 
    Hence it is odd and its integrals over cospheres vanishes.
    
}

\begin{remark} We opt not to  calculate the Maslov indices $\sigma_j$ for the sake of brevity,  and absorb them into the constants $\beta_{\ell}$.
\end{remark}

\section{Proof of  Theorem \ref{main 2} and Theorem \ref{main 2b}}

In this section we apply a cosine Tauberian theorem to deduce Theorem \ref{main 2} from Theorem \ref{main 4}. 

\subsection{Tauberian Theorems} \label{SS Tauberian}

For the reader's convenience we quote the statements of two Fourier Tauberian theorem from \cite{SV}.
In what follows, $\rho$ will be{ a strictly positive even Schwartz-class function} on $\R$ with compact Fourier support satisfying that $\hat\rho(0)=1$. $N$ will be a tempered, monotone increasing function with $N(\lambda) = 0$ for $\lambda < 0$, and $N'$ its distributional derivative as a nonnegative measure on $\R$.

\begin{proposition}[Corollary B.2.2 in \cite{SV}] \label{tauberian 1} 
	Fix $\nu \geq 0$. If $N' * \rho(\lambda) = O(\lambda^\nu)$, then
	\[
		N(\lambda) = (N * \rho)(\lambda) + O(\lambda^\nu).
	\]
	This estimate holds uniformly for a set of such $N$ provided $N' * \rho(\lambda) = O(\lambda^\nu)$ holds uniformly.
\end{proposition}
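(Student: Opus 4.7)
The plan is to convert the averaged hypothesis $N'*\rho = O(\lambda^\nu)$ into a pointwise increment bound for $N$, and then estimate $N - N*\rho$ by direct splitting.

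First, since $\rho$ is strictly positive and continuous, there exist $T, c > 0$ with $\rho \geq c$ on $[-T, T]$. Because $N'$ is a nonnegative measure (by monotonicity of $N$) and $\rho \geq 0$ everywhere, we have
\begin{equation*}
c[N(\lambda + T) - N(\lambda - T)] \;\leq\; \int_{\R} \rho(\lambda - \mu)\, dN(\mu) \;=\; (N' * \rho)(\lambda) \;=\; O(\lambda^\nu).
\end{equation*}
This yields the key local increment $N(\lambda + T) - N(\lambda - T) = O(\lambda^\nu)$ for $\lambda \geq 0$. Partitioning any interval $[\lambda - s,\, \lambda]$ (for $s > 0$) into at most $\lceil |s|/T\rceil + 1$ pieces of length $\leq 2T$, applying the increment estimate to each piece, and using $N \equiv 0$ on $(-\infty,0)$, we obtain for all $s \in \R$ the combined bound
\begin{equation*}
|N(\lambda) - N(\lambda - s)| \;\leq\; C(1 + |s|)\,(1 + \lambda + |s|)^{\nu}.
\end{equation*}
In particular the crude growth estimate $N(\lambda) = O(\lambda^{\nu+1})$ follows.

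Second, since $\hat\rho(0) = 1$ gives $\int_{\R}\rho = 1$, we write
\begin{equation*}
N(\lambda) - (N*\rho)(\lambda) \;=\; \int_{\R} [N(\lambda) - N(\lambda - s)]\,\rho(s)\, ds,
\end{equation*}
and split the integration at $|s| = \lambda/2$. On $|s| \leq \lambda/2$ the preceding bound gives $|N(\lambda) - N(\lambda - s)| \leq C \lambda^\nu(1 + |s|)$, and since $\rho \in \scal(\R)$ has finite first moment, this region contributes $O(\lambda^\nu)$. On $|s| > \lambda/2$ we use the crude bound $|N(\lambda) - N(\lambda - s)| \leq N(\lambda + |s|) = O((\lambda+|s|)^{\nu+1})$ together with the Schwartz decay $\rho(s) = O_M(|s|^{-M})$; choosing $M$ large, this tail contributes $O(\lambda^{-\infty})$. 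Summing the two contributions gives the claimed $O(\lambda^\nu)$ error.

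The main obstacle is the opening step: extracting a pointwise increment bound for $N$ from the averaged hypothesis on $N'*\rho$. The strict positivity of $\rho$ on the interval $[-T,T]$ is exactly what makes this inversion possible, by turning $N'*\rho$ into a one-sided upper bound for the local mass of $dN$. Once that bound is in hand, the remainder of the argument is quantitative book-keeping with the decay of $\rho$, and every constant depends only on the implicit constant in $N'*\rho = O(\lambda^\nu)$ together with $c$, $T$, and finitely many Schwartz seminorms of $\rho$. The uniformity assertion therefore follows immediately: if the hypothesis holds with a single constant for a family $\{N\}$, the conclusion holds with a single constant for the same family.
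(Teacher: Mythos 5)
Your proof is correct, and it is essentially the standard argument underlying the cited result in Safarov--Vassiliev (the paper simply quotes Corollary B.2.2 and does not reprove it). The two key moves — inverting the averaged hypothesis into a local increment bound $N(\lambda+T)-N(\lambda-T)=O((1+\lambda)^\nu)$ via strict positivity of $\rho$ near the origin, and then estimating $N-N*\rho = \int [N(\lambda)-N(\lambda-s)]\rho(s)\,ds$ by splitting at $|s|=\lambda/2$ — are exactly the mechanism behind the Fourier Tauberian lemmas in Appendix B of \cite{SV}. The bookkeeping in the intermediate increment estimate and the two integral contributions is sound, and the uniformity conclusion follows as you say since every constant depends only on the implied constant in the hypothesis, on $c$, $T$, and finitely many Schwartz seminorms of $\rho$. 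One small remark: you never use the compact Fourier support of $\rho$; the argument works for any strictly positive Schwartz $\rho$ with $\int\rho=1$, which is a harmless (and in fact slightly more general) formulation — compact Fourier support is what makes $N'*\rho$ computable from the wave trace elsewhere in the paper, not what makes the Tauberian step work.
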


\begin{proposition}[Theorem B.5.1 in \cite{SV}] \label{tauberian 2}
	Fix $\nu \geq 0$. If $N' * \rho(\lambda) = O(\lambda^\nu)$ and additionally
	\[
		N' * \chi(\lambda) = o(\lambda^\nu)
	\]
	for every Schwartz-class $\chi$ on $\R$ whose Fourier support is contained in a compact subset of $(0,\infty)$. Then,
	\[
		N(\lambda) = N * \rho(\lambda) + o(\lambda^\nu).
	\]
\end{proposition}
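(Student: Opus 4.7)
The plan is to bootstrap Proposition \ref{tauberian 1} using the one-sided Fourier-support hypothesis on $\chi$. Proposition \ref{tauberian 1} already yields $N(\lambda) - (N*\rho)(\lambda) = O(\lambda^\nu)$, so only the upgrade from $O$ to $o$ requires work. The strategy is to decompose the error $N - N*\rho$ into pieces controlled either by the $o$-hypothesis, or by Proposition \ref{tauberian 1} itself applied at a finer smoothing scale that can be made arbitrarily small.

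Fix $\epsilon > 0$ and choose a Schwartz function $\rho_\delta$ with $\widehat{\rho_\delta} \in C_0^\infty(-\delta,\delta)$ and $\widehat{\rho_\delta}(0) = 1$, and write
\[
N - N*\rho \ = \ \bigl(N - N*\rho_\delta\bigr) \ + \ N*(\rho_\delta - \rho).
\]
For the second bracket, $\widehat{\rho_\delta - \rho}$ is Schwartz, compactly supported, and vanishes at $0$. Split $\widehat{\rho_\delta - \rho} = \widehat f + \widehat g$ where $\widehat f$ is supported away from $0$ while $\widehat g$ is supported in a tiny neighborhood of $0$ with correspondingly small $L^1$ norm. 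Decompose $f = \chi_+ + \chi_-$ with $\widehat{\chi_\pm}$ supported in compact subsets of $\pm(0,\infty)$; since $\widehat{\chi_\pm}(0) = 0$, each $\chi_\pm$ has a Schwartz primitive $\tilde\chi_\pm$ whose Fourier transform $\widehat{\chi_\pm}(\tau)/(i\tau)$ is still compactly supported in $\pm(0,\infty)$. Thus $N*\chi_\pm = N' * \tilde\chi_\pm = o(\lambda^\nu)$ by the hypothesis (applied to $\tilde\chi_+$ and to its complex conjugate for the negative-frequency half, using that $N'$ is real). The residual $N*g$ is bounded by $\|g\|_{L^1} \cdot |N|(\lambda) = O(\|g\|_{L^1} \lambda^\nu)$, which can be made $\le \epsilon \lambda^\nu$ by tightening the support of $\widehat g$.

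For the first bracket, Proposition \ref{tauberian 1} applied with $\rho_\delta$ in place of $\rho$ yields $|N - N*\rho_\delta|(\lambda) \leq C(\delta)\lambda^\nu$; the input $N'*\rho_\delta = O(\lambda^\nu)$ it requires follows from the original hypothesis $N'*\rho = O(\lambda^\nu)$ by comparing $\rho_\delta$ to translates of $\rho*\check\rho$ and invoking monotonicity of $N$. The main technical point, and the expected obstacle, is the claim $C(\delta)\to 0$ as $\delta\to 0$: this expresses that $N$ varies less and less on scales much finer than those captured by $\rho$, and is established by exploiting monotonicity of $N$ together with the uniform $O(\lambda^\nu)$ bound on $N'*\rho$, which controls the total $N$-mass that can accumulate in a window of size $O(1/\delta)$. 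Combining the two brackets and letting $\delta\to 0$ for our fixed $\epsilon$ yields $|N - N*\rho|(\lambda) \leq C\epsilon \lambda^\nu$ for large $\lambda$, and since $\epsilon$ was arbitrary, the conclusion $N(\lambda) = (N*\rho)(\lambda) + o(\lambda^\nu)$ follows.
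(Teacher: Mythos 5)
The paper does not prove this statement; it is quoted from Safarov--Vassiliev \cite{SV} (Theorem B.5.1), so there is no in-paper argument to compare against. Evaluated on its own terms, your proposal has a structural gap in the first bracket that cannot be repaired along the lines you indicate. You push the entire content of the $o$-hypothesis into the second bracket $N*(\rho_\delta-\rho)$ and for the first bracket assert that Proposition \ref{tauberian 1} at scale $\delta$ gives $|N - N*\rho_\delta|\leq C(\delta)\lambda^\nu$ with $C(\delta)\to 0$ as $\delta\to 0$, claimed to follow from monotonicity and the $O(\lambda^\nu)$ bound on $N'*\rho$ alone. This is false, and the direction of the effect is opposite to what you need: as $\supp\widehat{\rho_\delta}$ shrinks, $\rho_\delta$ spreads over a window of length $\sim\delta^{-1}$, and the constant produced by Proposition \ref{tauberian 1} scales like $\int(1+|s|)\,|\rho_\delta(s)|\,ds\sim\delta^{-1}\to\infty$. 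A concrete obstruction: for $N'=\sum_{k\geq1}k^\nu\delta_k$ one has $N$ monotone and $N'*\rho_\delta(\lambda)=O(\lambda^\nu)$ uniformly in $\delta$, yet $N-N*\rho_\delta$ oscillates with amplitude $\asymp\lambda^\nu$ for every fixed $\delta$ (convolution smooths away a sawtooth of that size), so $C(\delta)$ is bounded below, not tending to $0$. If $C(\delta)\to0$ were provable from monotonicity and the $O$-bound, the $o$-hypothesis would be essentially superfluous in the theorem, contradicting its role in the paper of separating aperiodic flows (Theorem \ref{main 2b}, Corollary \ref{JUMPCONJ}) from Zoll-type cases.

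There is also a secondary error: the line $|N*g|\leq\|g\|_{L^1}\,|N|(\lambda)=O(\|g\|_{L^1}\lambda^\nu)$ miscounts a power, since the hypotheses give only $N(\lambda)=O(\lambda^{\nu+1})$, so this bound is $O(\|g\|_{L^1}\lambda^{\nu+1})$ and cannot be made $o(\lambda^\nu)$ by shrinking $\|g\|_{L^1}$. Since $\widehat g(0)=0$, the correct move is to pass to the Schwartz primitive $\tilde g$ and bound $N*g=N'*\tilde g$ using the local mass estimate $N'([\lambda-a,\lambda+a])\lesssim_a (N'*\rho)(\lambda)=O(\lambda^\nu)$, which is available because $\rho$ is strictly positive. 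More broadly, a workable route keeps $\rho$ fixed and writes $N-N*\rho=N'*\Phi$ with $\Phi(x)=\mathbf{1}_{(0,\infty)}(x)-\int_{-\infty}^{x}\rho$, so that $\widehat\Phi(\tau)=(1-\widehat\rho(\tau))/(i\tau)$ vanishes near $\tau=0$ and equals $(i\tau)^{-1}$ for $|\tau|$ large; one then approximates $\Phi$ by pieces with compact Fourier support off $0$ (to which the $o$-hypothesis applies after splitting into positive and negative frequencies) plus errors whose contribution to $N'*\cdot$ is controlled by the $O$-bound and the positivity of $\rho$.
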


\subsection{Proof of Theorem \ref{main 2}}\label{TAUBPSISECT}
\begin{proof} 
Theorem \ref{main 2} pertains to the Weyl function $N^{c} _{\psi, H  }(\lambda)$ of  \eqref{cpsi},
$$N^{c} _{\psi, H  }(\lambda): = 
\sum_{j, k:  \lambda_k \leq \lambda}  \psi( \mu_j - c \lambda_k)  \left| \int_{H} \phi_j \overline{\psi_k}dV_H \right|^2. $$
{Recall our assumption that $\psi \geq 0$.} Then, $N^{c} _{\psi, H  }(\lambda)$ 
 is monotone non-decreasing and has Fourier transform $S^c(t, \psi)$ \eqref{St}.

We apply Proposition \ref{tauberian 1} with $\hat{\rho}\; \cap\; {\rm singsupp}\; S^c(t, \psi) = \{0\}$ and  to $d N^{a,c} _{\psi, H  }(\lambda) $. By  Lemma \ref{CONVOLUTION},  $\rho* d N^{c} _{\psi, H  }(\lambda)  = \beta_0 \;\lambda^{n-2} + O(\lambda^{n-2-1})$, and therefore,
$$\begin{array}{lll}  N^{c} _{\psi, H  }(\lambda) & = &  \rho* N^{c} _{\psi, H  }(\lambda) + O(\lambda^{n-2}) \\ &&\\
& = & \dfrac{\beta_0}{n-1} \;\lambda^{n- 1} + O(\lambda^{n-2}),\end{array}  $$
concluding the proof of Theorem \ref{main 2}.
\end{proof}



\subsection{Proof of Corollary \ref{JUMPCOR} } 
To prove  Corollary \ref{JUMPCOR} it suffices to prove that, for any $\epsilon > 0$ there exists a test
function $\psi \geq 0, \hat{\psi} \in C_0^{\infty}(\R), \hat{\psi}(0) =1$ and a universal constant $C(\epsilon, \delta)$
depending only on $(\epsilon, \delta)$  so that for all $\lambda_j$,
\begin{equation} \label{LB} J_{\psi, H}^c(\lambda_j) \geq C(\epsilon, \delta) \; J_{\epsilon, H}^{c} (\lambda_j). \end{equation}
Then the upper bound for $ J_{\psi, H}^c(\lambda_j) $ given  in Corollary \ref{main 2cor} provides the
upper bound for $ J_{\epsilon, H}^{c} (\lambda_j)$.

\begin{proof}
We have
\[
	\sum_{ k: |\mu_k - c \lambda_j | \leq \epsilon  }    \left| \int_{H} \phi_j \overline{\psi_k}dV_H \right|^2 \leq \sum_k  \psi(\mu_k - c\lambda_j)\left| \int_{H} \phi_j \overline{\psi_k}dV_H \right|^2 
\]
provided $\psi$ is chosen to be a nonnegative Schwartz function with small Fourier support, with $\psi(0) > 1$, and perhaps scaled wider so that $\psi \geq \mathbf{1}_{[-\epsilon,\epsilon]}$. By Corollary \ref{main 2cor}, the right side is $O(\lambda_j^{n-2})$, concluding the proof.
\end{proof}

\subsection{Proof of Theorem \ref{main 2b}}\label{main 2bSECT}

\begin{proof}

By Proposition \ref{tauberian 2}, it suffices to check the additional condition,
\begin{equation} \label{ADDCOND} 
	 \chi * d N_{\psi, H}^c (\lambda) = o(\lambda^{n-2} )
\end{equation}
for every Schwartz-class $\chi$ on $\R$ whose Fourier support is contained in a compact subset of $(0,\infty)$.

To prove this, we consider the expansions given in Theorem \ref{main 4} and Theorem \ref{main 5} and  especially Proposition \ref{MORESINGS}. In addition to the assumptions of Theorem \ref{main 4}, the assumption of  Theorem \ref{main 2b} is that $d_j(T) < d_j(0)$ for $T \not=0$, i.e. that $\dim Z_j(0) > \dim Z_j(T)$ for all $T \neq 0$. This assumption together with Proposition \ref{MORESINGS} shows that \eqref{ADDCOND} holds; then Proposition \ref{tauberian 2} implies the first claim in Theorem \ref{main 2b}.

Finally, we bound the jumps in the Weyl function of Theorem \ref{main 2b} by the remainder as before and obtain $J_{\psi, H}^c(\lambda) = o_\psi(\lambda^{n-2})$.
\end{proof}


\section{Proof of Theorem \ref{main 3}}\label{BUSECT}

In this section we deduce Theorem \ref{main 3} from Theorem \ref{main 5} and an additional Tauberian theorem, which allows
us to replace $\psi$ in the inner sum of \eqref{cpsirho} (or \eqref{EXPRESSIONS2}),
 by an indicator function ${\bf 1}_{[-\epsilon, \epsilon]}$. Throughout this section, we assume that $c < 1$.
 
  For simplicity of notation,  when $\psi = {\bf 1}_{[-\epsilon, \epsilon]}$, we write,
  \begin{equation} \label{cpsirho2} 
 N^{c} _{\epsilon, H  }(\lambda) := \sum_{j: \lambda_j \leq \lambda} J^c_{{\bf 1}_{[-\epsilon, \epsilon]}}(\lambda_j)
 \end{equation}
where as in \eqref{JDEF}
\begin{equation} \label{SUM} 
	J^c_{ {\bf 1}_{[-\epsilon, \epsilon]}}(\lambda_j) : =  \sum_{\ell: \lambda_{\ell} = \lambda_j} 
\sum_{  \substack{k: | \mu_k - c \lambda_j|  \leq \epsilon}}   \left| \int_{H} \phi_{\ell} \overline{\psi_k}dV_H \right|^2
\end{equation}
 The Tauberian theorem is not used in the  traditional way, i.e. to replace a monotone increasing function with jumps by a smoothly varying
 sum. The
relevant monotone function is the  Weyl-Kuznecov sum \eqref{cpsirho2}, and as with \eqref{cpsi}, its jump discontinuities occur only at the points $\lambda = \lambda_j$, with jumps $J^c_{ {\bf 1}_{[-\epsilon, \epsilon]}}(\lambda_j)$. 


We treat the sum \eqref{SUM}    as a semi-classical Weyl
 function with semi-classical parameter $\lambda_j^{-1}$ and deploy
 the semi-classical  Tauberian theorem of \cite{PR85, R87}.     The main complication is that the terms 
are weighted by the unbounded and  non-uniform weights $\left| \int_{H} \phi_j \overline{\psi_k}dV_H \right|^2$ (in $(\lambda_j, \mu_k))$. Moreover, 
$J^c_{ {\bf 1}_{[-\epsilon, \epsilon ]}}(\lambda_j)$  does not usually
 have an asymptotic expansion as $\lambda_j \to \infty$ due to lack of asymptotics for the individual eigenfunctions $\phi_j$. We need to sum
 in $\lambda_j$ as well to obtain asymptotics. 
 
 To set things up for the Tauberian arguments in \cite{PR85,R87}, we define
\begin{equation} \label{EMPIRICAL}
\left\{
\begin{array}{l} \displaystyle d\mu^c_{\lambda}(x) :=\sum_{j: \lambda_j \leq \lambda}  \sum_{k}   \left| \int_{H} \phi_j \overline{\psi_k}dV_H \right|^2 \delta_{\mu_k - c\lambda_j}(x). \\
\displaystyle \sigma^c_{\lambda} (x) = \int_{-\infty}^x d\mu^c_{\lambda}(y).
\end{array}
\right.
\end{equation}
By  \eqref{c}, 
$$
N^{c}_{\epsilon, H}(\lambda) :=  \int_{ -\epsilon}^{ \epsilon} d\mu^{c}_{\lambda} =  \sigma^c_{\lambda} (\epsilon) -
\sigma^c_{\lambda} (-\epsilon) =  \sum_{j,\lambda_j \leq \lambda} J_{{\bf 1}_{[-\epsilon,\epsilon]}}^c(\lambda_j)
. $$
 Note that $\supp  \mu^c_{\lambda} \subset \{x \geq - c \lambda\}$ and that,
\begin{equation} \label{MASS}
 	\int_{-\infty}^{\infty} d \mu_{\lambda}^c  = \sum_{j: \lambda_j \leq \lambda}  \|\gamma_H \phi_j\|_{L^2(H)}^2.
\end{equation}

In comparison with the proofs of Theorems \ref{main 2} - \ref{main  2b}, we  do not have a complete, or even two-term,  asymptotic expansion for either of the once-smoothed sums, \eqref{cpsi} or 
 $\sum_j \rho(\lambda - \lambda_j)  J^c_{{\bf 1}_{[-\epsilon, \epsilon]}}(\lambda_j). $

The  Tauberian  strategy is to smooth out the indicator functions ${\bf 1}_{[-\epsilon, \epsilon]}$ and apply Theorem \ref{main 5}  to this
 kind of mollified sum.  Our aim is to show that, under the assumption that $\gcal^{0,0}_c$ is dominant, we can sharpen the sum at the expense of weakening the remainder to $o_{c,\epsilon}(\lambda^{n-1})$.
 The smoothing error is an `edge effect'  due to the sum of the  terms $ \left| \int_{H} \phi_j \overline{\psi_k}dV_H \right|^2$ 
 near the endpoints $|\mu_k - c \lambda_j |= \epsilon $ of the interval $|\mu_k - c \lambda_j |\leq  \epsilon $. The Tauberian theorem is used to show that the
 eigenvalues and the quantities $|\mu_k - c \lambda_j |= \epsilon $ are sufficiently uniform, i.e. do not concentrate 
 near the endpoints.










\subsection{The proof of Theorem \ref{main 3}}

Following \cite{PR85, R87}, we denote by
 $\rho_1 \in C_0^{\infty}(-1,1) $ a smooth cutoff satisfying $\rho_1(0) = 1$, $\rho_1(-t) = \rho_1(t)$. With no loss of
 generality, we 
assume $\hat \rho_1(\tau) \geq 0$ and $\hat \rho_1(\tau) \geq \delta_0 > 0$ for $|\tau| \leq \epsilon_0$.  
Then set,
\begin{equation} \label{thetaDEF} \rho_T(\tau) = \rho_1(\frac{\tau}{T}), \;\;\; \theta_{T}(x) := \hat \rho_T (x)  = T \hat  \rho_1(T x). \end{equation}
In particular, $\int \theta_T(x) dx = 1$ and $\theta_T(x) > T \delta_0$ for $|x| < \epsilon_0/T$. 
Note that $\theta_{T}* d\mu^c_{\lambda}$ is by definition the measure,
$$
	\theta_{T}* d\mu^c_{\lambda}(x) =  \sum_{j: \lambda_j  \leq \lambda}  \sum_k \theta_T(\mu_k - c \lambda_j - x)  \left| \int_{H} \phi_j \overline{\psi_k}dV_H \right|^2.
$$
Of course, $\theta_{T}* d\mu^c_{\lambda} (x)  \to d\mu^c_{\lambda} (x) $ as $T \to \infty$.

Let us record the relation between the various relevant quantities.

\begin{lemma} \label{RELS} We have,
\[
	\int_{-\epsilon}^{\epsilon} \theta_{T}* d\mu^c_{\lambda} (x) =  N^{c} _{\theta_T * {\bf 1}_{[-\epsilon, \epsilon]}, H  }(\lambda) =  \sigma_{\lambda}^c * \theta_{T} (\epsilon) - \sigma_{\lambda}^c  * \theta_{T} (-\epsilon)
\]

\end{lemma}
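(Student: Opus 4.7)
The plan is to verify each equality by direct computation from the definitions, which is essentially a bookkeeping exercise once the right change-of-variable identity is in hand.

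First I would establish the leftmost equality. Starting from the definition of $\theta_T * d\mu_\lambda^c$, I swap the $x$-integration over $[-\epsilon,\epsilon]$ with the double sum over $(j,k)$ in \eqref{EMPIRICAL} to obtain
\begin{equation*}
\int_{-\epsilon}^{\epsilon} \theta_{T}* d\mu^c_{\lambda}(x)\,dx = \sum_{j:\lambda_j\leq\lambda}\sum_k \left|\int_H \phi_j\overline{\psi_k}\,dV_H\right|^2 \int_{-\epsilon}^{\epsilon}\theta_T(\mu_k - c\lambda_j - x)\,dx.
\end{equation*}
The inner integral is precisely the convolution $(\theta_T * \mathbf{1}_{[-\epsilon,\epsilon]})(\mu_k - c\lambda_j)$, since $\theta_T$ is even (being the Fourier transform of the even function $\rho_T$, by \eqref{thetaDEF}). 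Plugging this back into \eqref{cpsi} with $\psi$ replaced by $\theta_T * \mathbf{1}_{[-\epsilon,\epsilon]}$ gives $N^c_{\theta_T * \mathbf{1}_{[-\epsilon,\epsilon]}, H}(\lambda)$, which is the first equality.

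For the second equality, the key observation is that $\sigma_\lambda^c$ is by definition the cumulative distribution of $d\mu_\lambda^c$, so its distributional derivative satisfies $d\sigma_\lambda^c = d\mu_\lambda^c$. Since convolution commutes with differentiation in the distributional sense, we have
\begin{equation*}
(\sigma_\lambda^c * \theta_T)'(x) = (\theta_T * d\mu_\lambda^c)(x)
\end{equation*}
as distributions on $\R$. The function $\sigma_\lambda^c * \theta_T$ is smooth (since $\theta_T$ is Schwartz and $\sigma_\lambda^c$ is a tempered function of polynomial growth), so the fundamental theorem of calculus yields
\begin{equation*}
\sigma_\lambda^c * \theta_T(\epsilon) - \sigma_\lambda^c * \theta_T(-\epsilon) = \int_{-\epsilon}^{\epsilon} (\theta_T * d\mu_\lambda^c)(x)\,dx,
\end{equation*}
matching the left-hand side.

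There is no real obstacle here: both equalities reduce to interchanging integration orders (Fubini) and using the identity $d\sigma_\lambda^c = d\mu_\lambda^c$. The only mild point to be careful about is that $d\mu_\lambda^c$ is a finite sum of point masses (since the sum over $j$ is truncated at $\lambda_j \leq \lambda$ and the sum over $k$ is controlled by the Schwartz decay of $\theta_T$ effectively, or alternatively by interpreting the sums weakly against test functions), so all the interchanges are justified and the distributional identities become pointwise ones after convolving with $\theta_T$.
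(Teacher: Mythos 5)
Your proof is correct and matches the paper's approach, which is simply to unwind the definitions (the paper says only ``This follows from the definitions''). One small inaccuracy in your final remark: $d\mu_\lambda^c$ is not a finite sum of point masses, since the sum over $k$ is infinite; it is however a finite measure because $\sum_k |\langle \gamma_H\phi_j,\psi_k\rangle|^2 = \|\gamma_H\phi_j\|_{L^2(H)}^2 < \infty$ for each of the finitely many $j$ with $\lambda_j \leq \lambda$, which is enough to justify the interchanges you use.
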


\begin{proof}
	This follows from the definitions.
\end{proof}

%

The asymptotics of $ N^{c} _{\theta_T * {\bf 1}_{[-\epsilon, \epsilon]}, H  }(\lambda) $  are given in Theorem  \ref{main 5} with 
$$
	\psi = \psi_{T, \epsilon} : = \theta_T * {\bf 1}_{[-\epsilon, \epsilon]}.
$$
We use the notation $\psi_{T, \epsilon}$ henceforth to simplify the notation. Putting things together,
\begin{equation} \label{CORCOR}
	N^{c} _{\epsilon, H}(\lambda) =  N^{c} _{\psi_{T, \epsilon}, H  }(\lambda) +  N^{c}_{(\psi_{\infty, \epsilon} - \psi_{T, \epsilon}), H  }(\lambda),
\end{equation}
where $\psi_{\infty} = \mathbf{1}_{[-\epsilon,\epsilon]}$.
Here, $\epsilon $ is fixed.
The hard step is to estimate the error in the smoothing approximation,
\begin{equation}\label{NEEDT}\begin{array}{l} N^{c}_{(\psi_{\infty, \epsilon} - \psi_{T, \epsilon}), H  }(\lambda) =\left(\sigma_{\lambda}^c (\epsilon) - \sigma_{\lambda}^c (- \epsilon) \right)-  \left(\sigma_{\lambda}^c *  \theta_{T} (\epsilon)- \sigma_{\lambda}^c  * \theta_{T}  (- \epsilon) \right),
 \end{array} \end{equation}  in terms of $(\lambda, T)$.

  

\begin{proposition}\label{TLEMa} With the same notation and assumptions as in Theorem \ref{main 3},  for any $\epsilon > 0$ and $c \in (0,1)$, there exist constants $\gamma(c, \epsilon) $ such that, for any $T >0$,
	$$
		|N^{c}_{(\psi_{\infty, \epsilon} - \psi_{T, \epsilon}), H  }(\lambda)| = \left| \int_{-\epsilon}^{\epsilon} (\theta_T * d\mu_{\lambda}^c- d\mu_{\lambda}^c) \right|  \leq 
	\frac{\gamma(c, \epsilon)}{T}  \lambda^{n-1} + O_{T, \epsilon} (\lambda^{n-3/2})
	.$$ 
\end{proposition}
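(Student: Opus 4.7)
The plan is to reduce Proposition \ref{TLEMa} to a semi-classical Tauberian increment estimate for the measure $d\mu^c_\lambda$, obtained by applying Theorem \ref{main 5}. First, by switching the order of integration,
\[
	\int_{-\epsilon}^{\epsilon}(\theta_T * d\mu^c_\lambda)(y)\,dy = \int \theta_T(y)\,\mu^c_\lambda([-\epsilon - y,\epsilon - y])\,dy,
\]
so, using $\int\theta_T = 1$,
\[
	N^c_{(\psi_{\infty,\epsilon}-\psi_{T,\epsilon}),H}(\lambda) = \int \theta_T(y)\bigl[\mu^c_\lambda([-\epsilon,\epsilon]) - \mu^c_\lambda([-\epsilon - y,\epsilon - y])\bigr]\,dy.
\]
The bracket is the signed difference of the masses of $d\mu^c_\lambda$ on two intervals of length $|y|$ localized at the edges $\pm\epsilon$, so everything reduces to controlling those masses weighted against $\theta_T$, which concentrates at scale $1/T$.

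Next, apply Theorem \ref{main 5} with the test function $\psi_x(\cdot) = \theta_T(x - \cdot)$, whose Fourier transform has support in $[-T,T]$. Under the hypothesis of Theorem \ref{main 3} that $\gcal_c^{0,0}$ is the only maximal component of $\gcal_c^0$, the sum over maximal components in the leading coefficient $a_c^0(H,\psi_x)$ collapses to the single term at $s_0^m = 0$, which equals $\widehat{\psi_x}(0) = \int\theta_T = 1$, independent of $x$. Hence
\[
	(\theta_T * d\mu^c_\lambda)(x) = N^c_{\psi_x,H}(\lambda) = \tilde C(c,H)\,\lambda^{n-1} + O_{T,\epsilon}(\lambda^{n-3/2})
\]
uniformly for $x$ in any bounded set, with $\tilde C(c,H) = C_{n,d}\,c^{d-1}(1-c^2)^{(n-d-2)/2}\hcal^d(H)$. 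Since $\theta_T(y) = T\hat\rho_1(Ty)$ with $\hat\rho_1(0)=1$, there exist $c_0,\delta_0 > 0$ with $\theta_T \geq c_0 T$ on $[-\delta_0/T,\delta_0/T]$, and positivity of $\theta_T$ gives
\[
	\mu^c_\lambda\bigl([x-\delta_0/T,\,x+\delta_0/T]\bigr) \leq \frac{1}{c_0 T}\,(\theta_T * d\mu^c_\lambda)(x) \leq \frac{C\,\lambda^{n-1}}{T} + O_{T,\epsilon}(\lambda^{n-3/2}).
\]
Covering a generic interval $[a,a+\delta]$ by $O(\delta T + 1)$ such pieces yields the Tauberian increment bound
\[
	\mu^c_\lambda([a,a+\delta]) \lesssim \max(\delta,1/T)\,\lambda^{n-1} + O_{T,\epsilon}(\lambda^{n-3/2}).
\]

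To conclude, bound the bracket in the key identity by $\mu^c_\lambda(I^y_+) + \mu^c_\lambda(I^y_-)$, where $I^y_\pm$ are intervals of length $|y|$ near $\pm\epsilon$. Applying the increment bound and integrating against $\theta_T$ gives
\[
	\bigl|N^c_{(\psi_{\infty,\epsilon}-\psi_{T,\epsilon}),H}(\lambda)\bigr| \leq 2C\,\lambda^{n-1}\!\int|\theta_T(y)|\max(|y|,1/T)\,dy + O_{T,\epsilon}(\lambda^{n-3/2}),
\]
and the substitution $u = Ty$ combined with the Schwartz decay of $\hat\rho_1$ gives $\int|\theta_T(y)|\max(|y|,1/T)\,dy = O(1/T)$, producing the desired $\gamma(c,\epsilon)\lambda^{n-1}/T$. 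The main obstacle is justifying the $x$-independence of the leading constant in the density bound, which is exactly where the dominant-component hypothesis of Theorem \ref{main 3} enters: it forces any contribution from non-principal maximal components to be absent at top order no matter how large $\supp\widehat{\psi_x} = [-T,T]$ becomes. A secondary bookkeeping concern is that the $T$-dependent $O(\lambda^{n-3/2})$ error accumulates through the covering argument and the final edge integration, but this is harmless because the statement permits the remainder constant to depend on both $T$ and $\epsilon$.
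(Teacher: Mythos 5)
Your proof is correct and takes essentially the same route as the paper: both express $N^c_{(\psi_{\infty,\epsilon}-\psi_{T,\epsilon}),H}(\lambda)$ as an integral of increments of $\mu^c_\lambda$ against $\theta_T$, derive a Tauberian increment bound $\mu^c_\lambda([a,a+\delta]) \lesssim \max(\delta,1/T)\lambda^{n-1} + O_{T,\epsilon}(\lambda^{n-3/2})$ from the uniform density estimate $\theta_T * d\mu^c_\lambda = O(\lambda^{n-1})$ (which rests on Theorem \ref{main 5} together with the dominant-component hypothesis to keep the leading coefficient $T$-independent), and then conclude by integrating against $\theta_T$ using the Schwartz decay of $\hat\rho_1$. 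The only cosmetic difference is that you package the increment bound as a single covering argument, whereas the paper (following Petkov--Robert) isolates it as Proposition \ref{TLEM2} and proves it by a three-case telescoping.
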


Before giving the proof, we verify that Proposition \ref{TLEMa} implies Theorem \ref{main 3}.
By Theorem \ref{main 5} and the hypothesis that $\gcal_c^{0,0}$ is dominant, we have, 
$$
N^{c} _{\psi_{T, \epsilon}, H}(\lambda) = \hat{\psi}_{T, \epsilon}(0) \;  A_{n, d}^c  \hcal^{d}(H) \lambda^{n-1 } +   R_{\psi_{T, \epsilon}} (\lambda),
$$
where $A_{n,d}^c $ is the leading coefficient, e.g. $A_{n,d}^c = C_{n, d} c^{d-1} (1 - c^2)^{\frac{n-d-2}{2}} $ for $0 < c < 1$, and where $R_{\psi_{T, \epsilon}}(\lambda) =   O_{T, \epsilon}(\lambda^{n-3/2}) $. Moreover, 
$\hat{\psi}_{T, \epsilon} (0) = 2 \epsilon.$ The full error term in \eqref{CORCOR} is therefore, 
\begin{align*} 
\wt{R}_{T, \epsilon}(\lambda): & =    N^{c}_{(\psi_{\infty, \epsilon} - \psi_{T, \epsilon}), H  }(\lambda) +  R_{\psi_{T, \epsilon}}(\lambda) \\
& = O( \frac{\gamma(c, \epsilon)}{T}  \lambda^{n-1}) + O_{T, \epsilon} (\lambda^{n-3/2 }) +  O_{T, \epsilon}(\lambda^{n-3/2}).
\end{align*}
The bound $\tilde R_{T,\epsilon}(\lambda) = o_\epsilon(\lambda^{n-1})$ follows by taking $T = T(\lambda)$ as a function of $\lambda$ increasing $T(\lambda) \nearrow \infty$ sufficiently slowly.

\subsection{Proof of Proposition \ref{TLEMa}}

We have,
$$
\begin{array}{lll}
\int_{-\epsilon }^{\epsilon } (\theta_T * d\mu_{\lambda}^{c} - d\mu_{\lambda}^{c} )& = & 
\int_{\R} \left(\mu_{\lambda}^c ( [-\epsilon , \epsilon ] - \tau)  - \mu^c_{\lambda} [- \epsilon , \epsilon ]\right) \theta_{T}(\tau) d\tau\\&&\\
& = &T \int_{\R}  \left( \mu^c_{\lambda} ([- \epsilon ,\epsilon ] - \tau) - \mu_{\lambda}^c[-\epsilon , \epsilon ]) \right)  \hat{\rho}_1( \tau T) d \tau \\&&\\
& = &T  \int_{|\tau| \leq \frac{1}{T} }    \left( \mu^c_{\lambda} ([- \epsilon ,\epsilon ] - \tau)  - \mu^c_{\lambda} ([- \epsilon, \epsilon ]) \right) \hat{\rho}_1( \tau T) d \tau \\&&\\
&  + & T \int_{|\tau| > \frac{1}{T}}    \left( \mu^c_{\lambda} ([-\epsilon , \epsilon ] - \tau) - \mu^c_{\lambda} ([-\epsilon ,\epsilon ]) \right)   \hat{\rho}_1( \tau T) d \tau \\&&\\& =: & I_1  + I_2.
\end{array}
$$

The key point is to prove the analogue of \cite[Proposition 3.2]{PR85}.
\begin{proposition}\label{TLEM2}  With the same notation and assumptions as in Theorem \ref{main 3},  and for any  $0 < c < 1$ here  exist constants $\gamma_1(c, \epsilon)$ such that, for any $T > 0$,  
	$$\left|\mu^c_{\lambda} ([- \epsilon , \epsilon ] - \tau)   -\mu^c_{\lambda} ([- \epsilon, \epsilon ] )   \right| \leq  \gamma_1(c, \epsilon)   (\frac{1}{T} + |\tau|)  \lambda^{n - 1 } + C_1(T,c) O(\lambda^{n-3/2}), $$

\end{proposition}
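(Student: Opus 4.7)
The plan is to reduce the estimate of the difference of $\mu_\lambda^c$ on the translated versus untranslated interval to a bound on the mass of $\mu_\lambda^c$ on two short intervals near $\pm \epsilon$, and then to bound that mass by sandwiching the sharp indicator with a Selberg-type smooth majorant and applying Theorem \ref{main 5}.

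Step 1 (Reduction to short intervals near the endpoints). Since $\mu_\lambda^c$ is a positive measure and the symmetric difference of $[-\epsilon,\epsilon]$ and $[-\epsilon,\epsilon] - \tau = [-\epsilon-\tau,\epsilon-\tau]$ consists of at most two intervals $J_\tau^\pm$ located near $\pm \epsilon$ with total length $\le 2|\tau|$ (assuming $|\tau| \le \epsilon$, with the remaining regime handled trivially by Theorem \ref{main 5} applied to $[-\epsilon,\epsilon]$),
$$
|\mu_\lambda^c([-\epsilon,\epsilon] - \tau) - \mu_\lambda^c([-\epsilon,\epsilon])| \le \mu_\lambda^c(J_\tau^+) + \mu_\lambda^c(J_\tau^-).
$$
Thus it suffices to prove, for any interval $I \subset \R$ with $|I| \le |\tau|$, the bound $\mu_\lambda^c(I) \le \gamma_0(c,\epsilon)(|\tau| + 1/T)\lambda^{n-1} + C_1(T,c)\lambda^{n-3/2}$.

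Step 2 (Smooth majorant with controlled Fourier support). For each such $I$, construct a Schwartz function $\psi_{I,T}^+$ satisfying $\psi_{I,T}^+ \geq \mathbf{1}_I$ pointwise on $\R$, $\hat{\psi}_{I,T}^+ \in C_0^\infty$ supported in a fixed interval of length $O(T)$, and
$$
\hat{\psi}_{I,T}^+(0) = \int_\R \psi_{I,T}^+ \le |I| + \frac{C}{T}.
$$
Such majorants exist classically; one may use the Beurling–Selberg extremal majorant, or explicitly set $\psi_{I,T}^+ = \mathbf{1}_{I'} \ast \rho_T$ where $I'$ is $I$ thickened by $O(1/T)$ and $\rho_T$ is a nonnegative band-limited mollifier with $\int \rho_T = 1$, together with a correction ensuring the pointwise lower bound (the cost of this correction is absorbed in $C/T$).

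Step 3 (Application of Theorem \ref{main 5} and assembly). Since
$$
\mu_\lambda^c(I) \;=\; \sum_{j:\lambda_j \le \lambda}\sum_k \mathbf{1}_I(\mu_k - c\lambda_j)\,|\langle \gamma_H \phi_j,\psi_k\rangle_H|^2 \;\le\; N^c_{\psi_{I,T}^+,H}(\lambda),
$$
and since the hypothesis of Theorem \ref{main 3} ensures that the principal component $\gcal_c^{0,0}$ is the only maximal component of $\gcal_c^0$, so the sum over maximal components in Theorem \ref{main 5} collapses to $\hat{\psi}_{I,T}^+(0)$, Theorem \ref{main 5} yields
$$
N^c_{\psi_{I,T}^+,H}(\lambda) \;=\; C_{n,d}\,c^{d-1}(1-c^2)^{(n-d-2)/2}\mathcal{H}^d(H)\,\hat{\psi}_{I,T}^+(0)\,\lambda^{n-1} + O_{\psi_{I,T}^+}(\lambda^{n-3/2}).
$$
Inserting $\hat{\psi}_{I,T}^+(0) \le |\tau| + C/T$ and summing the contributions from $J_\tau^+$ and $J_\tau^-$ produces the stated bound with $\gamma_1(c,\epsilon)$ depending only on $c,\epsilon$ and on the dimensional constants, while the remainder constant $C_1(T,c)$ absorbs the dependence of the Fourier integral remainder on the Schwartz seminorms of $\psi_{I,T}^+$ (which grow in $T$).

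The main obstacle is the construction and control of the sandwich function in Step 2: one must simultaneously keep $\hat{\psi}^+_{I,T}$ compactly supported (so Theorem \ref{main 5} applies with a Lagrangian remainder), keep $\psi^+_{I,T}$ pointwise above $\mathbf{1}_I$, and have $\int \psi^+_{I,T}$ exceed $|I|$ by only $O(1/T)$. The Beurling–Selberg construction accomplishes all three sharply; the tracking of how the Theorem \ref{main 5} remainder depends on the chosen $\psi^+_{I,T}$ (absorbed into $C_1(T,c)$) is the only other bookkeeping point, and it is benign because in the deduction of Proposition \ref{TLEMa} one eventually chooses $T = T(\lambda)$ to grow slowly enough that $C_1(T(\lambda),c)\lambda^{n-3/2} = o(\lambda^{n-1})$.
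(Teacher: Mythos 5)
Your proposal is correct in its main idea and takes a genuinely different route from the paper. The paper's proof breaks the argument into three cases according to the size of $\tau$ relative to $\epsilon_0/T$: for $|\tau|\le\epsilon_0/T$ it bounds the mass of the short endpoint interval by $\frac{1}{T\delta_0}\int\theta_T(\pm\epsilon-x)\,d\mu^c_\lambda(x)$ (using the lower bound $\theta_T\ge T\delta_0$ on $|x|\le\epsilon_0/T$) and applies Theorem \ref{main 5} with $\psi(s)=\theta_T(\pm\epsilon+s)$, giving the single-step bound $\frac{\gamma_0}{T\delta_0}\lambda^{n-1}+O_T(\lambda^{n-3/2})$; it then telescopes over $\lceil T|\tau|/\epsilon_0\rceil$ such steps to produce the $|\tau|$ factor. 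You instead build, for each endpoint interval $I$ of length $\lesssim|\tau|$, a single nonnegative majorant $\psi^+_{I,T}\ge\mathbf 1_I$ with $\hat\psi^+_{I,T}\in C_0^\infty$ and $\int\psi^+_{I,T}\lesssim|I|+1/T$, and apply Theorem \ref{main 5} once. This produces the factor $(|\tau|+1/T)$ directly from $\hat\psi^+_{I,T}(0)$ and eliminates the telescoping and the three-case split entirely, which is arguably cleaner; it also makes the $T$-dependence of the error more transparent, since it lives in a single application of Theorem \ref{main 5} rather than being accumulated over $O(T|\tau|)$ steps.

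Two technical points in Step 2 deserve more care. First, the Beurling--Selberg extremal majorant has Fourier transform supported in a compact interval but \emph{not} in $C_0^\infty$ (it fails to be smooth at the endpoints of its support), so it cannot be fed directly into Theorem \ref{main 5} as stated; you would need to mollify it, which is exactly where your alternative construction is the right one to use. Second, your sketch of the alternative ($\psi^+=\mathbf 1_{I'}*\rho_T$ ``together with a correction ensuring the pointwise lower bound'') is slightly misstated: a band-limited nonnegative $\rho_T$ with $\int\rho_T=1$ is never compactly supported, so $\mathbf 1_{I'}*\rho_T<1$ everywhere and never dominates $\mathbf 1_I$ without rescaling. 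The clean fix is to reuse the paper's own $\theta_T$ and take, for $I=[a,b]$ and $I'=[a-\epsilon_0/(2T),\,b+\epsilon_0/(2T)]$,
\[
\psi^+_{I,T}\;:=\;\tfrac{1}{\epsilon_0\delta_0}\,\mathbf 1_{I'}*\theta_T,
\]
which is nonnegative, has $\hat\psi^+_{I,T}=\tfrac{1}{\epsilon_0\delta_0}\,\widehat{\mathbf 1_{I'}}\cdot\hat\theta_T\in C_0^\infty([-T,T])$, satisfies $\psi^+_{I,T}\ge 1$ on $I$ because $\int_{|u|\le\epsilon_0/(2T)}\theta_T(u)\,du\ge\epsilon_0\delta_0$, and has $\hat\psi^+_{I,T}(0)=\tfrac{|I|}{\epsilon_0\delta_0}+\tfrac{1}{T\delta_0}\lesssim|I|+\tfrac1T$. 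With this in place, Step 3 goes through exactly as you describe, including the observation that the only-maximal-component hypothesis of Theorem \ref{main 3} makes the sum over $\hat\psi^+_{I,T}(s_j^m)$ collapse to $\hat\psi^+_{I,T}(0)$ regardless of how large $\supp\hat\psi^+_{I,T}$ is.
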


We first  show
that Proposition \ref{TLEM2} implies Proposition \ref{TLEMa}.

\begin{proof} We only verify this in the case $0 < c < 1$, since the second case is proved in the same way. First,  observe that Proposition \ref{TLEM2} implies,
	\begin{equation} \label{Izbd} I_1  \leq \sup_{|\tau| \leq \frac{1}{T}} 
	\left|   \mu^c_{\lambda} ([- \epsilon, \epsilon ] - \tau)   - \mu^c_{\lambda} ([-\epsilon, \epsilon] )  \right|, \end{equation}
	and  Proposition \ref{TLEM2} immediately implies the desired bound  for $|\tau| \leq \frac{1}{T}$.   For $I_2$ one uses that   $\hat{\rho}_1 \in \scal(\R)$.
	Since
	$T \int_{|\tau| \geq \frac{1}{T} } \hat{\rho}_1(\tau T) d \tau \leq 1, $ Proposition \ref{TLEM2} implies that there exist constants $A > 0$, $C_1(T,c)$  so
	that 

	\begin{equation} \label{IIzbd} \begin{array}{lll}  I_2   &\leq & \; A \;
	\lambda^{n-1}\gamma_1(c,\epsilon)   T \int_{|\tau| > \frac{1}{T}} (\frac{1}{T} + |\tau|) 
	\hat{\rho}_1( T \tau) d \tau \\&&\\ &&+  C_1(T,c) O(\lambda^{n-3/2 }) T \int_{|\tau| > \frac{1}{T}}    \hat{\rho}_1(T \tau ) d \tau.  \end{array}\end{equation}
	If one changes variables to $r = T \tau$ one also gets the estimate of the Tauberian Lemma. 
	\end{proof}

We now prove Proposition \ref{TLEM2}. 

\begin{proof} 	Since we are studying the increments $ \mu^c_{\lambda} ([-\epsilon , \epsilon ] - \tau) - \mu^c_{\lambda} ([-\epsilon ,\epsilon ])$ and since the integral is a sum where $|\tau| \leq \frac{1}{T}$ and $|\tau | \geq \frac{1}{T}$, 
the proof is broken up into 3 cases: (1) $|\tau | \leq \frac{\epsilon_0}{T}$,
	\;  (2) $\tau = \frac{\ell}{T} \epsilon_0,$ for some $\ell \in \Z$, and (3)  $ \frac{\ell}{T} \epsilon_0 \leq \tau \leq  \frac{\ell+1}{T} \epsilon_0 $, for some $ \ell \in \Z$.
	
	The key assumption that the only maximal component is the principal component is used to obtain the factor of $\frac{1}{T}$, which is responsible
	for the small oh of the remainder. 
	We use the exact formula for the leading coefficient when $\hat{\psi}$ has arbitrarily large compact support
	in Theorem \ref{main 5}. When $0 < c < 1$ and when  the only maximal component is the principal component, the sum over $s_j^m$ is merely
	the value of $s =0$. When $\hat{\psi}(0) = 1$, $a_c^0(H, \psi)$ is independent of $\supp \; \hat{\psi}$. When there do exist many maximal 
	components, as in the case of subspheres of spheres, the sum $\sum_j \hat{\psi}(s_j^m)$ essentially counts the number of the components
	with $s$-parameter in $\supp \psi$, and that can cancel the $\frac{1}{T}$.


		
		


\noindent{\bf (1)}  Assume $|\tau| \leq \frac{\epsilon_0}{T}$. Also assume $\tau > 0$ since the
		case $\tau < 0$ is similar. We claim that,
		$$  \left| \mu_{\lambda}^c ([-\epsilon, \epsilon] -\tau)  - \mu_{\lambda}^c [- \epsilon, \epsilon])  \right| \leq \frac{2\gamma_0(c,\epsilon)}{T \delta_0} \lambda^{n-1}. $$
		
		 Write 
		$$ \begin{array}{lll} \mu_{\lambda}^c([- \epsilon, \epsilon] -\tau)  - \mu_{\lambda}^c[- \epsilon, \epsilon])  & = & \int_{\R} [{\bf 1}_{[- \epsilon - \tau, \epsilon- \tau]} - {\bf 1}_{[- \epsilon, \epsilon]}] (x)d \mu_{\lambda}^c(x)
		. \end{array} $$
		For $T$ sufficiently large so that $ \tau \ll 2 \epsilon$, $$ [{\bf 1}_{[- \epsilon - \tau,\epsilon - \tau]} - {\bf 1}_{[-\epsilon, \epsilon]}] (x)= {\bf 1}_{[- \epsilon - \tau, -\epsilon]} - {\bf 1}_{[\epsilon - \tau, \epsilon]}. $$

		Since they are similar we only consider the $[- \epsilon - \tau, - \epsilon]$ interval. Since for $|\tau| < \epsilon_0/T$, we have $\theta_T(\tau) > T \delta_0$, 
		it follows from Theorem \ref{main 5} that,
$$\begin{array}{lll} 
\mu_{\lambda}^c ([- \epsilon - \tau, - \epsilon]) &\leq& \frac{1}{T \delta_0} \int_\R \theta_T(-\epsilon-x) d \mu_{\lambda}^c (x) \\
&\leq& \frac{\gamma_0(c,\epsilon)}{T \delta_0} \lambda^{n-1} + O_{T,\epsilon}(\lambda^{n-3/2}).
\end{array}$$
The estimate in the third line uses the formula for the leading coefficient of Theorem \ref{main 5} with $\psi(s) = \theta_T(-\epsilon + s)$. Under the hypotheses of the Proposition, the 
sum of $\hat{\psi}(s_j^m)$ is just equal to $\hat{\psi}(0)$ and is independent of $T$, as explained above. This completes the proof of the claim.
		\bigskip

	\noindent{\bf (2)}  Assume $\tau = \ell \frac{\epsilon_0}{T} , \ell \in \Z.$ With no loss of generality, we  may assume  
		$\ell \geq 1.$ Write
		$$ \mu_{\lambda}^c([- \epsilon, \epsilon])  - \mu_{\lambda}^c ([- \epsilon, \epsilon]- \frac{\ell}{T} \epsilon_0)  
		= \sum_{j = 1}^{\ell} \mu_{\lambda}^c ([- \epsilon, \epsilon] -  \frac{j-1}{T} \epsilon_0 )- \mu_{\lambda}^c([-\epsilon, \epsilon] - \frac{j}{T} \epsilon_0 )   $$
		and apply the estimate of (1) to upper bound the sum by 
		$$\frac{2 \ell \gamma_0(c,\epsilon)}{T \delta_0} \lambda^{n-1} + O_{T,\epsilon}(\lambda^{n-3/2}) = \frac{2 \gamma_0(c,\epsilon)}{\epsilon_0 \delta_0} \tau \lambda^{n-1} + O_{T,\epsilon}(\lambda^{n-3/2}).$$
		
		\bigskip 
		
	\noindent{\bf (3)} Assume $ \frac{\ell}{T} \epsilon_0 \leq \tau \leq  \frac{\ell+1}{T} \epsilon_0$ with $\ell \in \Z$. Write
		$$\begin{array}{lll} \mu_{\lambda}^c ([-\epsilon, \epsilon] + \tau ) -  \mu_{\lambda}^c([-\epsilon, \epsilon]) & = &   \mu_{\lambda}^c ([- \epsilon, \epsilon] + \tau ) -   \mu_{\lambda}^c([-\epsilon,\epsilon] + \frac{\ell}{T} \epsilon_0 ) \\&&\\&&+ \mu_\lambda^c([-\epsilon,\epsilon] + \frac{\ell}{T} \epsilon_0 )
		- \mu_\lambda^c([\epsilon,\epsilon]).\end{array} $$
		Applying (1) and (2), it follows that
		$$|\mu_{\lambda}^c([- \epsilon, \epsilon] + \tau ) -  \mu_{\lambda}^c([- \epsilon,\epsilon])| \leq \frac{2 \gamma_0(c,\epsilon)}{\delta_0} \left(\frac{\tau}{\epsilon_0} + \frac{1}{T} \right)  \lambda^{n-1} + O_{T,\epsilon}(\lambda^{n - 3/2}). $$ 
		\bigskip
	
		This completes the proof of Proposition \ref{TLEM2}, hence also Proposition  \ref{TLEMa} and therefore
		 Theorem \ref{main 3}.
\end{proof}

\section{Appendix}
 
 \subsection{Background on Fourier integral operators and their symbols}\label{FIOSECT}
 The advantage of expressing $\Upsilon_{\nu, \psi}^{(1)}(t)$ in terms of pullback and pushforward is
 the symbol calculus of Lagrangian distributions is more elementary to describe for such compositions. 
 We refer to \cite{HoIV,GS77,D73} for background but quickly review the basic definitions.

 The space of
 Fourier integral operators of order $\mu$ associated to a canonical relation $C$ is denoted by $$K_A \in I^{\mu}(M \times M, C'). $$
 If $A_1 \in I^{\mu_1}(X \times Y, C_1'), A_2 \in   I^{\mu_2}(Y \times Z, C_2')$, and if $C_1 \circ C_2$ is a `clean' composition,
 then by \cite[Theorem 25.2.3]{HoIV},
 \begin{equation} \label{EXCESSCOMP} A_1 \circ A_2 \in I^{\mu_1 + \mu_2 + e/2} (X \times Z, C'), \;\; C = C_1 \circ C_2, \end{equation}
 where $e$ is the `excess' of the composition, i.e. if $\gamma \in C$, then $e =\dim C_{\gamma}$, the dimension of the fiber of $C_1 \times C_2
 \cap T^* X \times \Delta_{T^*Y} \times T^*Z$ over $\gamma$ (see \eqref{EXCESS} below).
 
 Pullback and pushforward of half-densities on Lagrangian submanifolds are more difficult to describe. 
 They depend on the map $f$ being a {\it morphism} in the language of \cite[page 349]{GS77}. Namely,
 if $f: X \to Y$ is a smooth map, we say it is a morphism on half-densities if it is augmented by a section
 $r(x) \in \mathrm{Hom}(|\Lambda|^{\half}( TY_{f(x)}, |\Lambda|^{\half} T_x X)$, that is, a linear transformation 
 mapping densities on $TY_{f(x)}$ to densities on $T_x X$. As pointed out in \cite[page 349]{GS77}, 
 such a map is equivalent to augmenting $f$ with a special kind  of half-density on the co-normal bundle $N^*(\mathrm{graph}(f))$ to the graph of $f$, which is constant along the fibers of the co-normal bundle. In our application, the maps are all restriction maps or pushforwards under canonical maps, and they are morphisms
 in quite obvious ways.  Note that under pullback by an immersion,  or  under a restriction, the number $n$ of independent variables is decreased  by the codimension $k$ and therefore
 the order goes up by $\frac{k}{4}.$ Pullbacks under submersions increase the number $n$.  Pushforward is adjoint to pullback and therefore also decreases the order by the same amount.

 Assume that $F: M \to N$ is a smooth map between manifolds.
 Let  $\Lambda \subset \dot  T^* N$ be a Lagrangian submanifold. Then its pullback is defined by, 
 \begin{equation} \label{PB} f^* \Lambda = \{(m, \xi) \in T^*M \mid \exists (n, \eta) \in \Lambda, f(m) = n, f^* \eta = \xi\}. \end{equation}
 On the other hand, let $\Lambda \subset  \dot  T^*M$. Then its pushforward is defined by,
 \begin{equation} \label{PF} f_* \Lambda = \{(y, \eta) \in T^* N \mid y = f(x), (x, f^* \eta) \in \Lambda\}. \end{equation}
 
 The principal symbol of a Fourier integral operator associated to a canonical relation $C$ is a half-density times a section of the  Maslov line bundle  on $C$.
 We refer to \cite[Section 25.2]{HoIV} and to \cite[Definition 4.1.1]{D73} for the definition; see also \cite{DG75, GU89} for further expositions and for
 several calculations of principal symbols closely related to those of this article.

 The \emph{order} of a homogeneous  Fourier integral operator\index{Fourier integral operator!order} $A \colon L^2(X) \to L^2(Y)$ in the
 non-degenerate case  is given in terms of a local oscillatory
 integral formula $$K_A(x,y) = \frac{1}{(2 \pi)^{n/4 + N/2}}\int_{\R^N} e^{\rmi \phi(x, y, \theta)} a(x, y, \theta) \dd \theta $$by \begin{equation} \label{ORDDEF} \ord A = m +
 \frac{N}{2} - \frac{n}{4}, \;\; \mathrm{where} \; n = \dim X + \dim Y, \; m = \ord \;a \end{equation} where the order  of the amplitude $a(x, y, \theta)$
 is the degree of the top order term of the polyhomogeneous expansion of $a$ in $\theta$, and $N$ is the number of phase
 variables $\theta$ in the local Fourier integral representation (see
 \cite[Proposition~25.1.5]{HoIV}); in the general clean case with
 excess $e$, the order goes up by $\frac{e}{2}$ (see \cite[Proposition~25.1.5']{HoIV}
 ). The order is designed to be independent of the specific representation of $K_A$ as an oscillatory integral. 
 
 Further, 
 the principal symbol of a Fourier
 integral distribution
 $$ I(x, y) = \int_{\R^N} e^{i \phi (x, y, \theta) }
 a( x, y, \theta) d \theta $$
 with non-degenerate homogeneous phase function $\phi$ and amplitude $a \in S^{0}_{cl}(M \times M \times \R^N),$
 is the transport to the Lagrangian $\Lambda_{\phi} =
 \iota_{\phi}(C_{\phi})$ of $a(\lambda) \sqrt{d_{C_\phi}}$ where $\sqrt{d_{C_\phi}}$ is the half density given by the square root of
 \begin{equation} \label{SYMBOLDEF}  d_{C_{\phi}}: =  \left|\frac{\partial(\lambda,
 	\phi_{\theta}')}{\partial(x,y,\theta)}\right|^{-1} |d \lambda| \end{equation}  on $C_{\phi}$, where $\lambda=(\lambda_1,...,\lambda_n)$ are local coordinates on the critical manifold $C_{\phi} =\{ (x,y,\theta); d_{\theta}\phi(x,y,\theta) = 0\}. $

We next review the definition of the excess in a fiber product diagram. 
Let $F = \{(x, y) \in X \times Y, f(x) = g(y)\}$, 

\begin{equation} \label{FP} 
\begin{tikzcd}
	X \arrow[d, "f"'] & F \arrow[l] \arrow[d] \\
	Z                 & Y \arrow[l, "g"']    
\end{tikzcd}
\end{equation} 
 The maps $f: X \to Z$ and $g: Y \to Z$ are said to intersect cleanly if the fiber product $F$ is a submanifold of $X \times Y$ and if 
 the tangent diagram is a fiber product diagram. 
 The excess is \begin{equation} \label{EXCESS} e = \dim F + \dim Z - (\dim X + \dim Y). \end{equation} Then $e =0$ if and only if the diagram is transversal. 
 Above $d = \dim {\rm Fix}(G^T)$ is the excess of the diagram.
\subsection{Enhancement, morphisms and pullbacks and pushforward of symbols} \label{MORPHISM}

The behavior of symbols under pushforwards and pullbacks of
Lagrangian submanifolds is described in \cite{GS77}, Chapter IV. 5
(page 345). The main statement (Theorem 5.1, loc. cit.) states
that the symbol map $\sigma: I^m(X, \Lambda) \to S^m(\Lambda)$ has
the following pullback-pushforward  properties under maps $f: X
\to Y$ satisfying  appropriate transversality conditions,
\begin{equation} \label{PFPB} \left\{\begin{array}{l} \sigma (f^* \nu) =
f^* \sigma(\nu), \\ \\
\sigma(f_* \mu) = f_* \sigma(\mu), \end{array} \right.
\end{equation} To be precise, $f$ must be ``enhanced'' as defined in \cite[Chapter 7]{GS13} in order to define a pullback or
pushforward on symbols. This is because the pullback/pushforward  of a half-density on $\Lambda$   is often not  a half-density
on $f^* \Lambda$.

The enhancement of a smooth map $f: X \to Y$ is a map $(f, r)$
with $r: |f^* T Y|^{\half} \to |TX|^{\half}$. Thus if $\rho$ is a
half-density on $Y$
$$(f, r)^* \rho = r(\rho(f(x)) \in |T_x X|^{\half}. $$

If $\iota: X \to Y$ is an immersion, then $N^*_{\iota} X$ consists
of covectors $\xi \in T^*_{\iota(x)} Y: d\iota_x^* \xi = 0$.
Enhancing an immersion is giving a section of $|N^*_{\iota}
X|^{\half}$.

If $\pi: Z \to X$ is a submersion and $V_z $ is the tangent space
to $\pi^{-1}(x)$. Then enhancing the fibration is giving a section
of $|V_z|^{\half}$.

In \cite[ p. 349]{GS13}, the authors explain that enhancement with $r$
is to define a half-density on $N^*(\Gamma_f)$ which is
constant along the fibers of $N^*(\Gamma_f) \to \Gamma_f$.  As a
result, a morphism $f: X \to Y$ induces a pushforward $$f_*:
\Omega^{\half}(\Lambda_X) \to \Omega^{\half}(f_* \Lambda_X). $$ It
also induces a pullback operation
$$f^* : \Omega^{\half}(\Lambda_Y) \to \Omega^{\half} f^*
(\Lambda_Y). $$

 Under appropriate clean or transversal assumptions,
if $f: X \to Y$ is a morphism of half-densities, then $f_*$ and
$f^*$ are morphisms of half-densities on Lagrangian submanifolds.

 \begin{remark} If $f: X \to Y$ is a submersion then $f^*$ is
 injective. Indeed if $f^* \eta = 0$ then $\eta \perp f_* TX = TY$.  If $f$ is an immersion, then $f_*$ is injective.
  \end{remark}

\end{document}